\newcommand*{\rom}[1]{\ensuremath{\mathrm{\expandafter\@slowromancap\romannumeral #1@}}}
\newtheorem{theorem}{Theorem}
\newtheorem{claim}[theorem]{Claim}
\newtheorem{proposition}[theorem]{Proposition}
\newtheorem{lemma}[theorem]{Lemma}
\theoremstyle{definition}
\newtheorem{definition}[theorem]{Definition}
\newtheorem{remark}[theorem]{Remark}
\newtheorem{example}[theorem]{Example}
\newtheorem{step}{Step}
\numberwithin{equation}{section}
\numberwithin{theorem}{section}
\numberwithin{table}{section}
\renewcommand{\theenumi}{\roman{enumi}}
\renewcommand{\labelenumi}{\rm{(\theenumi)}}
\newcommand{\N}{\ensuremath{\mathbb{N}}}
\newcommand{\Z}{\ensuremath{\mathbb{Z}}}
\newcommand{\R}{\ensuremath{\mathbb{R}}}
\newcommand{\C}{\ensuremath{\mathbb{C}}}
\newcommand{\I}{\ensuremath{\sqrt{-1}}}
\newcommand{\GL}{\ensuremath{\mathrm{GL}}}
\newcommand{\SL}{\ensuremath{\mathrm{SL}}}
\newcommand{\Spin}{\ensuremath{\mathrm{Spin(7)}}}
\newcommand{\SU}{\ensuremath{\mathrm{SU}}}
\newcommand{\U}{\ensuremath{\mathrm{U}}}
\newcommand{\del}{\ensuremath{\mathrm{\partial}}}
\newcommand{\delbar}{\ensuremath{\mathrm{\overline{\partial}}}}
\newcommand{\der}{\ensuremath{\mathrm{d}}}
\newcommand{\id}{\ensuremath{\mathrm{id}}}
\newcommand{\din}{\rotatebox{90}{\ensuremath{\in}}}
\newcommand{\dsubset}{\rotatebox{90}{\ensuremath{\subset}}}
\newcommand{\commute}{\rotatebox{180}{\ensuremath{\circlearrowright}}}
\newcommand{\reduce}[1]{\scalebox{1}{\ensuremath{#1}}}
\newcommand{\norm}[1]{\ensuremath{\left| #1 \right|}}
\newcommand{\Norm}[1]{\ensuremath{\left\| #1 \right\|}}
\newcommand{\restrict}[2]{\ensuremath{\left. #1 \right|_{#2}}}
\DeclareMathOperator{\Image}{Im}
\DeclareMathOperator{\Real}{Re}
\DeclareMathOperator{\Ker}{Ker}
\DeclareMathOperator{\sgn}{sgn}
\DeclareMathOperator*{\smalloplus}{\reduce{\bigoplus}}
\DeclareMathOperator*{\smallotimes}{\reduce{\bigotimes}}
\DeclareMathOperator*{\smallsum}{\reduce{\sum}}
\DeclareMathOperator*{\smallcup}{\reduce{\bigcup}}
\DeclareMathOperator*{\residue}{res}
\begin{document}
\title[Global smoothings of SNC complex surfaces with trivial canonical bundle]
{Differential geometric global smoothings\\
of simple normal crossing complex surfaces\\
with trivial canonical bundle}

\author{Mamoru Doi}
\email{doi.mamoru@gmail.com}

\author{Naoto Yotsutani}
\address{Kagawa University, Faculty of education, Mathematics, Saiwaicho $1$-$1$, Takamatsu, Kagawa, $760$-$8522$, Japan}
\email{yotsutani.naoto@kagawa-u.ac.jp}

\makeatletter
\@namedef{subjclassname@2020}{%
  \textup{2020} Mathematics Subject Classification}
\makeatother

\subjclass[2020]{Primary: 58J37, Secondary: 14J28, 32J15, 53C56}
\keywords{complex surfaces with trivial canonical bundle, normal crossing varieties, $K3$ surfaces, smoothing, gluing, $d$-semistability.}

\maketitle

\noindent{\bfseries Abstract.}
Let $X$ be a simple normal crossing (SNC) compact complex surface with trivial canonical bundle which includes triple intersections.
We prove that if $X$ is $d$-semistable, then there exists a family of smoothings in a differential geometric sense.
This can be interpreted as a differential geometric analogue of the smoothability results due to
Friedman, Kawamata-Namikawa, Felten-Filip-Ruddat, Chan-Leung-Ma, and others in algebraic geometry.
The proof is based on an explicit construction of local smoothings around the singular locus of $X$,
and the first author's existence result of holomorphic volume forms on global smoothings of $X$.
In particular, these volume forms are given as solutions of a nonlinear elliptic partial differential equation.

As an application, we provide several examples of $d$-semistable SNC  
complex surfaces with trivial canonical bundle including double curves,
which are smoothable to complex tori, primary Kodaira surfaces and $K3$ surfaces.
We also provide several examples of such complex surfaces including triple points, which are smoothable to $K3$ surfaces.

\section{Introduction}\label{sec:Intro}
This is a sequel to the first author's paper \cite{Doi09}, where he obtained a construction of compact complex surface with trivial canonical bundle
by gluing together compact complex surfaces with an anticanonical divisor.
As an application, he proved that if a simple normal crossing (SNC) complex surface $X$ is $d$-semistable and has at most double intersections,
then there exists a family of smoothings of $X$ in a differential geometric sense.
More precisely, there exist a smooth $6$-manifold $\mathcal{X}$ and a smooth surjective map $\varpi :\mathcal{X}\to\Delta\subset\C$
such that $\varpi^{-1}(0)=X$, $\varpi^{-1}(\zeta)$ for each $\zeta\in\Delta\setminus\{ 0\}$
is a smooth compact complex surface with trivial canonical bundle,
and the complex structure on $\varpi^{-1}(\zeta )$ depends \emph{continuously} on $\zeta$ outside the singular locus of $X$.
One purpose of this article 
is to extend the smoothability result in \cite{Doi09} to cover the cases where $X$ has triple intersections.
For another purpose, we provide many examples of SNC 
complex surfaces with trivial canonical bundle
including double and triple intersections, to which we can apply our smoothability result.

Throughout this article, 
$X=\smallcup_{i=1}^N X_i$ (or possibly $X=\smallcup_{i=0}^{N-1}X_i$)
denotes a compact connected complex surface with normal crossings of $N$ irreducible components
with $\dim_\C X_i=2$ for each $i$, unless otherwise specified.
Before stating the main result, we give some definitions.

\begin{definition}\label{def:SNC}
Let $X$ be a compact complex analytic surface with $N$ irreducible components $X_1,\dots ,X_N$.
Then we say that $X$ is a \emph{simple normal crossing (SNC) complex surface} if each irreducible component $X_i$ is smooth,
and $X$ is locally embedded in $\C^3$ as $\set{(\zeta^1,\zeta^2,\zeta^3)\in\C^3|\zeta^1\dots\zeta^\ell=0}$ for some $\ell\in\{ 1,2,3\}$.
\end{definition}

In particular, the above $X$ has no fourfold intersections: $Q_{ijk\ell}=X_i\cap X_j\cap X_k\cap X_\ell=\emptyset$ for all $i,j,k,\ell$.
Let $D_{ij}=X_i\cap X_j$ be a set of \textit{double curves} and $T_{ijk}=X_i\cap X_j\cap X_k$ be a set of \textit{triple points}.
We define index sets $I_i$ and $I_{ij}\subset I_i$ by
\begin{align*}
I_i&=\set{j\in\{ 1,\dots ,N\}|X_i\cap X_j\neq\emptyset}\quad\text{and}\\
I_{ij}&=\set{k\in\{ 1,\dots ,N\}|X_i\cap X_j\cap X_k\neq\emptyset}.
\end{align*}
Then $D_i=\sum_{j\in I_i}D_{ij}$ and $T_{ij}=\sum_{k\in I_{ij}}T_{ijk}$ are divisors on $X_i$ and $D_{ij}$ respectively,
and $T_i=\sum_{j\in I_i}T_{ij}$ is the set of triple points on $X_i$.
Note that we admit \emph{self-intersections}, that is, it may happen that $X_i\cap X_i\neq\emptyset$.
An example of an SNC complex surface with a self-intersection will be given in Example $\ref{ex:tori_Kodaira}$ for $N=1$.

\begin{definition}\label{definition:d-s.s}
Let $X=\smallcup_{i=1}^N X_i$ be an SNC complex surface.
Then $X$ is $d$-{\it semistable} if for each double curve $D_{ij}$ we have
\begin{equation}\label{def:d-s.s2}
N_{ij}\otimes N_{ji}\otimes [T_{ij}]\cong\mathcal{O}_{D_{ij}},
\end{equation}
where $N_{ij}$ denotes the holomorphic normal bundle $N_{D_{ij}/X_i}$ to $D_{ij}$ in $X_i$,
and $[T_{ij}]$ denotes the associated holomorphic line bundle of $T_{ij}$ (see, e.g., \cite{GH94}, p. $145$ and p. $134$ for the respective definitions).
\end{definition}
This is a differential geometric interpretation of Friedman's original complex analytic (and also algebro-geometric) definition that
a SNC complex surface $X$ is said to be \emph{$d$-semistable} if
\begin{equation}\label{def:d-s.s}
\left(\smallotimes_i \mathcal{I}_{X_i}/\mathcal{I}_{X_i}\mathcal{I}_{D}\right)^{\!\! *}\cong \mathcal{O}_D
\end{equation}
for the singular locus $D$ on $X$, where $\mathcal{I}_{X_i}$ and $\mathcal{I}_{D}$ are the ideal sheaves of $X_i$ and $D$ in $X$ respectively.

Now the main theorem of this article 
is described as follows.
\begin{theorem}\label{thm:smoothing}
Let $X=\smallcup_{i=1}^N X_i$ be an SNC 
complex surface.
Assume the following conditions:
\renewcommand{\theenumi}{\roman{enumi}}
\begin{enumerate}
\item $X$ is $d$-semistable;
\item each $D_i=\sum_{j\in I_i}D_{ij}$ is an anticanonical divisor on $X_i$; and
\item there exists a meromorphic volume form $\Omega_i$ on each $X_i$ with a single pole along $D_i$
such that the Poincar\'{e} residue $\residue_{D_{ij}}\Omega_i$ of $\Omega_{i}$ on $D_{ij}$
is minus the Poincar\'{e} residue $\residue_{D_{ij}}\Omega_j$ of $\Omega_{j}$ on $D_{ij}$ for all $i,j$.
(For the definition of Poincar\'{e} residues, see {\rm\cite{GH94}, pp. }$147$--$148$).
\end{enumerate}
Then there exist $\epsilon>0$ and a surjective map $\varpi :\mathcal{X}\to\Delta =\set{\zeta\in\C|\norm{\zeta}<\epsilon}$
such that the following statements hold.
\renewcommand{\theenumi}{\alph{enumi}}
\begin{enumerate}
\item $\mathcal{X}$ is a smooth real $6$-dimensional manifold and $\varpi$ is a smooth map.
\item $X_0=\varpi^{-1}(0)=X$.
\item For each $\zeta\in\Delta^{\! *}=\Delta\setminus\{ 0\}$, $X_\zeta=\varpi^{-1}(\zeta)$
is smooth and carries a complex structure which makes $X_\zeta$ a compact complex surface with trivial canonical bundle.
\item The complex structure on $X_\zeta$ depends continuously on $\zeta$ outside the singular locus $D=\smallcup_{i=1}^ND_i\subset X_0$.
More precisely, for any point $p\in\mathcal{X}\setminus D$
there exist a neighborhood $U$ of $p$ and a diffeomorphism $U\simeq V\times W$ with $W\subset\Delta$,
such that the induced complex structures on $V$ depend continuously on $\zeta\in W$.
\end{enumerate}
\end{theorem}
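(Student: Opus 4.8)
The plan is to assemble the total space $\mathcal{X}$ from explicit local smoothings along the singular locus $D$, glued to the smooth part of $X$, and then to correct the resulting family fibrewise to one with trivial canonical bundle by solving a nonlinear elliptic equation for a closed complex volume form. For the local models: away from $D$ a point has a polydisc neighborhood in some $X_i$, and we simply take the product with a disc in the $\zeta$-plane. Near a double curve $D_{ij}$, where $X$ is modeled on $\{z_1 z_2 = 0\}$, we smooth by the family $\{z_1 z_2 = \zeta\}$; near a triple point $T_{ijk}$, where $X$ is modeled on $\{z_1 z_2 z_3 = 0\}$, we smooth by $\{z_1 z_2 z_3 = \zeta\}$, which is smooth for $\zeta \neq 0$. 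Each of these models carries a canonical nowhere-vanishing holomorphic volume form obtained as a Poincar\'e residue. The role of hypothesis (i) is precisely to permit a single global parameter $\zeta$: the $d$-semistability relation $N_{ij} \otimes N_{ji} \otimes [T_{ij}] \cong \mathcal{O}_{D_{ij}}$ yields trivializations of the normal bundles whose product is trivial along each $D_{ij}$, so the local equations $z_1 z_2 = \zeta$ match across overlaps and dovetail with the adjacent triple-point models.

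Next I would patch the pieces together. Using a partition of unity subordinate to the cover of $X$ by the smooth charts and the singular-locus charts, I would glue the local smoothings to the product family, producing a smooth real $6$-manifold $\mathcal{X}$ together with a smooth map $\varpi$ satisfying $\varpi^{-1}(0) = X$ and $\varpi^{-1}(\zeta) = X_\zeta$ a smooth compact surface for $\zeta \neq 0$. This establishes (a) and (b); and since on the smooth part the family is literally a product, the induced complex structures on $X_\zeta$ vary continuously in $\zeta$ away from $D$, which gives (d). The cost of the interpolation near $D$ is that the almost complex structure on $X_\zeta$ is only approximately integrable and the glued volume form only approximately holomorphic.

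The substance is in (c). Hypothesis (ii) makes the meromorphic forms $\Omega_i$ available, and the sign-matching of Poincar\'e residues in (iii), $\residue_{D_{ij}}\Omega_i = -\residue_{D_{ij}}\Omega_j$, is exactly the compatibility that lets the $\Omega_i$ be spliced to the canonical residue forms on the local smoothings without a jump across the double curves. This produces a nowhere-vanishing complex $2$-form $\Omega_\zeta$ on $X_\zeta$ which is closed up to an error controlled by $\norm{\zeta}$ in suitable weighted H\"older norms. Invoking the first author's existence result, I would then solve a nonlinear elliptic equation for a correction term $\eta$ so that $\Omega_\zeta + \eta$ is exactly closed while preserving the algebraic constraints $\Omega \wedge \Omega = 0$ and $\Omega \wedge \bar\Omega \neq 0$; a closed complex $2$-form of this type determines an integrable complex structure for which it is a holomorphic volume form, forcing triviality of $K_{X_\zeta}$. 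For $\norm{\zeta}$ sufficiently small this is closed by a contraction-mapping (implicit function) argument, once the linearized operator is shown to be invertible with inverse bounded uniformly as $\zeta \to 0$.

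The main obstacle is the triple-point analysis. The double-curve case treated in \cite{Doi09} provides the template, but the degeneration of $\{z_1 z_2 z_3 = \zeta\}$ to $\{z_1 z_2 z_3 = 0\}$ is genuinely more singular: one must design weights adapted simultaneously to the three double curves meeting at a triple point, verify that the spliced form $\Omega_\zeta$ has the required decay there, and establish invertibility of the model linearized operator on the triple-point neck with a $\zeta$-uniform bound, so that the global estimate survives the gluing. Securing these uniform estimates across the triple points, and thereby closing the fixed-point argument, is where the real difficulty lies.
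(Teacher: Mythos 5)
Your overall strategy coincides with the paper's: the same local models ($\{z_1z_2=\zeta\}$ along double curves, $\{z_1z_2z_3=\zeta\}$ at triple points, a product family over the smooth part), the same use of $d$-semistability to make the local equations patch to a single global parameter $\zeta$, and the same final step of splicing the forms $\Omega_i$ via condition (iii) and correcting to a $\der$-closed $\SL (2,\C )$-structure by the existence theorem of \cite{Doi09} (Theorem \ref{thm:existence}). The problem is that your proposal stops exactly where the paper's actual work begins: you defer the entire triple-point analysis (designing weights adapted to the three double curves meeting at a triple point, proving invertibility of a model linearized operator on the triple-point neck with a $\zeta$-uniform bound) and concede that closing these estimates ``is where the real difficulty lies.'' A proof cannot end by naming its missing step; as written, part (c) is not established.

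What you are missing is the idea that makes the triple points harmless rather than hard. Because the crossings are \emph{simple}, one can choose holomorphic coordinates near each triple point in which everything is \emph{exactly} the standard model: conditions (C) and (F)/(I) of Section \ref{subsec:coordinates} arrange that each $U_{i,\alpha}$ containing a triple point is the unit bidisc with $D_i\cap U_{i,\alpha}=\{z^1z^2=0\}$ and $\Omega_i=\pm\,\der z^1/z^1\wedge\der z^2/z^2$ with no lower-order terms, and the tubular-neighborhood identification \eqref{eq:zw_approx} is the identity there. Consequently the spliced $\SU (2)$-structure coincides \emph{identically} with the model structure on the triple-point neighborhoods, by \eqref{eq:asymptotic_SU(2)_triple_2}; the error $1$-form satisfies $\xi_{ij}=0$ there, so the whole approximation error is supported on the double-curve necks away from the triple points, where it decays like $O(e^{-t_{ij}/2})$ --- precisely the situation already covered by Theorem \ref{thm:existence}. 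Hence no new weights, no triple-point model operator, and no new uniform estimates are needed; the real content lies in setting up the coordinates, the maps $h_{ij,\zeta}$, $\Phi_{ij}$, $\Psi_{ij,\beta}$ and the cylindrical parameters so that this exactness holds and the glued pieces are consistent. (Relatedly, Theorem \ref{thm:existence} does not require an inverse bounded uniformly as $\zeta\to 0$: its constants degenerate like powers of $\epsilon'=e^{-\gamma T_\zeta}$ with $0<\gamma <1/6$, which is then beaten by the exponential decay $e^{-T_\zeta /2}$ of the error; your insistence on $\zeta$-uniform invertibility is both stronger than necessary and not what the quoted theorem provides.)
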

\begin{remark}\label{rem:smoothing}
Conditions (ii) and (iii) of Theorem $\ref{thm:smoothing}$ are equivalent to the condition
that the canonical bundle $K_X$ of the SNC complex surface is trivial (see Section $\ref{subsec:K_X_SNC}$).
Also, if
\begin{itemize}
\item the normal crossing complex surface $X=\smallcup_iX_i$ is not simple, or
\item we are given $\{ X_i,D_i,\Omega_i\}_i$ satisfying conditions (i)--(iii) and gluing isomorphisms between double curves,
but not all local embeddings of $X_i$ into $\C^3$,
\end{itemize}
we obtain an alternative SNC complex surface $X'$ as follows:
If we glue together the irreducible components $X_i$ along the double curves using the isomorphisms and local embeddings into $\C^3$,
we obtain a desired SNC complex surface $X'$ to which we can apply Theorem $\ref{thm:smoothing}$.
\end{remark}
Let us compare Theorem $\ref{thm:smoothing}$ with Friedman's smoothability result.
According to \cite{Ku77}, Theorem \rom{2},
the central fiber of a semistable degeneration of $K3$ surfaces is classified into either Type \rom{1}, \rom{2}, or \rom{3},
where the central fiber corresponding to a degeneration of Type $\nu$ contains a $\nu$-ple intersection but no $(\nu +1)$-ple intersection.
Conversely, Friedman proved in \cite{Fr83}, Theorem $5.10$ that if $X$ is a $d$-semistable $K3$ surface
(see Section $\ref{subsec:Friedman}$ for the definition),
then there exists a family of smoothings $\varpi :\mathcal{X}\to\Delta\subset\C$ of $X$ with $K_{\mathcal{X}}=\mathcal{O}_{\mathcal{X}}$,
where $\mathcal{X}$ is a $3$-dimensional complex manifold, $\Delta$ is a domain in $\C$, and $\varpi$ is a holomorphic map.
See Section $\ref{subsec:Friedman}$ for more details.
On the other hand, our smoothability result holds even when $H^1(X,\mathcal{O}_X)\neq 0$ or not all irreducible components of $X$ are K\"{a}hlerian.
In exchange for a broader scope of application, our smoothings $\varpi :\mathcal{X}\to\Delta$ are not holomorphic but only smooth,
that is, both $\varpi$ and $\mathcal{X}$ are smooth, although each fiber admits a complex structure which depends continuously on $\zeta\in\Delta$.
In Section $\ref{subsec:ex_SNC_double}$, we give some examples of SNC complex surfaces $X$
with trivial canonical bundle including double curves and satisfying $H^1(X,\mathcal{O}_X)\neq 0$,
which are smoothable to complex tori and primary Kodaira surfaces, in addition to some examples of $d$-semistable $K3$ surfaces of Type \rom{2}.
Also, in Sections $\ref{sec:TypeIIIK3}$ and $\ref{sec:MatchingProb}$,
we construct some explicit examples of $d$-semistable K3 surfaces of Type \rom{3} which are smoothable to $K3$ surfaces.
However, it remains an interesting problem whether there exists an example of a $d$-semistable SNC complex surface with trivial canonical bundle
including \emph{triple points} which is smoothable to a complex torus or a primary Kodaira surface.

From the modern viewpoint of logarithmic geometry which is a central tool for smoothings, Kawamata and Namikawa generalized Friedman's smoothability result 
in higher complex dimensions $n\geqslant 3$ \cite{KN94}. In the paper, they required that $H^{n-1}(X,\mathcal{O}_X)=0$, $H^{n-2}(X_i,\mathcal{O}_{X_i})=0$ for all $i$, 
and all irreducible components $X_i$ are K\"{a}hlerian for proving the existence of smoothings of a compact K\"ahler normal crossing variety $X$.
Note that the first requirement $H^{n-1}(X,\mathcal O_X)=0$ comes from the use of the $T^1$-lifting property.
Kawamata-Namikawa's smoothability result is particularly effective in constructing many examples of Calabi-Yau manifolds.
Indeed, they obtained new examples of Calabi-Yau threefolds
by smoothing $d$-semistable SNC Calabi-Yau threefolds $X=X_1\cup X_2$ with a double intersection $D=X_1\cap X_2$.
Also, N.-H. Lee \cite{Lee19} used \cite{KN94} to obtain further new examples of Calabi-Yau threefolds
by smoothing $d$-semistable SNC Calabi-Yau threefolds of Type III (i.e., with triple intersections).
Meanwhile, Hashimoto and Sano \cite{HS19} noticed that in \cite{KN94} it is not necessary for $X$ as a whole to be K\"{a}hlerian
although its irreducible components have to be (see \cite{HS19}, Remark $2.7$),
and constructed an infinite number of examples of non-K\"{a}hler threefolds with trivial canonical bundle
by smoothing $d$-semistable SNC non-K\"{a}hler threefolds with trivial canonical bundle including a double intersection.

A major breakthrough in algebraic extensions of Friedman's smoothability result has been achieved by Felten, Filip and Ruddat in \cite{FFR19}.
Conceptually, $\mathcal{T}^1_X={\mathrm{Ext}}_{\mathcal{O}_X}^1(\Omega_X^1, \mathcal{O}_X)$ measures the failure of the normal sequence and the notion of $d$-semistability defined in 
$\eqref{definition:d-s.s}$ is then given by $\mathcal{T}_X^1\cong \mathcal O_D$.
It was proved in \cite{FFR19}, Theorem $1.1$ that an SNC variety with trivial canonical bundle is smoothable if $\mathcal{T}_X^1$ is generated by global sections and the singular locus of $X$ is projective.
Moreover, their smoothability theorem holds even when not all of the irreducible components $X_i$ of an SNC variety $X$ are K\"ahlerian, or when not all of the cohomology groups of $X$ and $X_i$ vanish.
We also mention that Chan, Leung, and Ma proved that the existence of smoothings of $d$-semistable log smooth Calabi-Yau varieties \cite{CLM19}.
Keeping these modern logarithmic viewpoints in mind \cite{RS20}, 
one can interpret Theorem $\ref{thm:smoothing}$ as a differential geometric counterpart of algebraic extensions of Friedman's smoothability
result (e.g., Theorem $1.1$ in \cite{FFR19}).

Meanwhile, our result of differential geometric smoothings using the gluing technique brings some insight into both differential and algebraic geometry.
As mentioned above, the smoothing technique in complex analytic and algebraic geometry is particularly effective
for constructing special smooth \emph{complex} manifolds such as Calabi-Yau manifolds.
For this purpose, our construction of differential geometric smoothings is also useful
because we need a complex structure not on the total space $\mathcal{X}$
of the smoothings $\varpi :\mathcal{X}\to\Delta$ but only on a smooth fiber $X_\zeta =\varpi^{-1}(\zeta )$.
Meanwhile in differential geometry, the gluing technique is effectively used for constructing many compact manifolds with a special geometric structure
such as Calabi-Yau, $G_2$- and $\Spin$- structures and complex structures with trivial canonical bundle
by Joyce \cite{Joyce}, Kovalev \cite{Kov03}, Clancy \cite{Cl11}, and Doi and Yotsutani \cite{DY14, DY15, DY19, Doi09}.
Our result here not only enables us to reconstruct such a manifold as a smooth fiber of global differential geometric smoothings of an SNC manifold
with a special geometric structure including only \emph{double} intersections,
but also opens up the possibilities of treating global smoothings of such SNC manifolds
admitting \emph{triple} intersections to obtain new examples.

The proof of Theorem $\ref{thm:smoothing}$ is based on an explicit construction of global smoothings of $X$ by gluing together local smoothings
around double curves and triple points.
For this purpose, we note that the holomorphic coordinates on a neighborhood of $D_{ij}$ in $X_i$ are approximated
by those on a neighborhood of $D_{ij}$ in $N_{ij}$ with a Taylor expansion in terms of the fiber coordinate of $N_{ij}$ via an exponential map.
Then we realize differential geometric local smoothings $\varpi_{ij}:\mathcal{V}_{ij}\to\Delta\subset\C$
of $X_i\cup X_j$ around each double curve $D_{ij}$ as a complex hypersurface of $N_{ij}\oplus N_{ji}$.
In particular, we see from the simplicity of the normal crossing complex surface $X=\smallcup_i X_i$
that the space $\mathcal{V}_{ij}$ of local smoothings around each triple point in $D_{ij}$ includes a local model written as
\begin{equation*}
\Set{(u^1,u^2,u^3)\in\C^3|\norm{u^1}<1,\norm{u^2}<1,\norm{u^3}<1,u^1u^2u^3\in\Delta}.
\end{equation*}
By gluing together $(X_i\setminus D_i)\times\Delta$ and $\mathcal{V}_{ij}$ for all $i,j$,
we obtain differential geometric global smoothings $\varpi :\mathcal{V}\to\Delta$.
For each $\zeta\in\Delta$, we can consistently define an $\SL (2,\C )$-structure $\widetilde{\Omega}_\zeta$ on the fiber $\varpi^{-1}(\zeta )$
using condition (iii) of Theorem $\ref{thm:smoothing}$ such that we have $\der\widetilde{\Omega}_\zeta\to 0$ as $\zeta\to 0$ in an appropriate sense.
Finally, we prove that if $\norm{\zeta}$ is sufficiently small, then we can deform $\widetilde{\Omega}_\zeta$ to a $\der$-closed $\SL (2,\C )$-structure $\Omega_\zeta$
using the main result of \cite{Doi09}, so that $\varpi^{-1}(\zeta )$ is a compact complex surface with trivial canonical bundle.

This article 
is organized as follows.
In Section $\ref{sec:SL2Csurf}$, we briefly state the results in \cite{Doi09} which will be used in the proof of Theorem $\ref{thm:smoothing}$.
We introduce the notions of $\SL(2,\C)$- and $\SU(2)$-structures in Section $\ref{subsec:geom_SL2C}$,
state the existence theorem of complex structures with trivial canonical bundle in Section $\ref{subsec:exist_SL2C}$,
define the canonical bundles of SNC complex surfaces in Section $\ref{subsec:K_X_SNC}$,
and review the semistable degenerations of $K3$ surfaces in Section $\ref{subsec:Friedman}$.
Before constructing explicit local smoothings in Sections $\ref{subsec:smoothing_double}$ and $\ref{subsec:smoothing_triple}$,
in Section $\ref{subsec:coordinates}$ we will introduce local holomorphic coordinates suited to
the smoothing problem, and give an example which provides a local model around a triple point.
The proof of Theorem \ref{thm:smoothing} is given in Section $\ref{subsec:existence}$.
Also, in Section $\ref{subsec:ex_SNC_double}$
we give several examples of $d$-semistable SNC complex surfaces with trivial canonical bundle including only double curves,
which are smoothable to complex tori, primary Kodaira surfaces, and $K3$ surfaces.
In the last two sections, we construct examples of SNC complex surfaces with triple points which are smoothable to $K3$ surfaces.
In Section $\ref{subsec:bl-up_SNC}$, we see that the blow-up of an SNC complex surface with trivial canonical bundle at finite points
in the double curves excluding the triple points inherits good properties from the original one.
Then in Section $\ref{subsec:ex_4triple}$, we produce examples of $d$-semistable $K3$ surfaces with four points in Example $\ref{ex:4triple}$.
Section $\ref{sec:MatchingProb}$ is devoted to considering a more technical example.
After fixing our notation in Section $\ref{subsec:Notation}$, we consider in Section $\ref{subsec:MP}$
the \emph{mismatch problem} which we encounter when we try to glue all components together along their intersections.
In order to handle this kind of mismatch issue, we shall take the order of blow-ups carefully.
Consequently, we will show that one can still glue all components together after taking appropriate blow-ups in Section $\ref{subsec:dssSNC}$.
 
The first author is mainly responsible for Sections $\ref{sec:Intro}$, $\ref{subsec:geom_SL2C}$--$\ref{subsec:K_X_SNC}$, and $\ref{sec:Smoothing}$,
and the second author mainly for Sections $\ref{sec:Intro}$, $\ref{subsec:Friedman}$, $\ref{sec:TypeIIIK3}$, and $\ref{sec:MatchingProb}$.\\

\noindent{\bfseries Acknowledgements.} 
The authors would like to thank Professor Kento Fujita for allowing us to use his example which is the source of Example $\ref{ex:Fujita}$.
Naoto Yotsutani also thank to Professors Nam-Hoon Lee, Taro Sano, and Yuji Odaka for fruitful discussions through e-mails.
Finally, we are grateful to the referee for valuable comments which improved the presentation of our manuscript.
This work was partially supported by JSPS KAKENHI Grant Number $18$K$13406$
and Young Scientists Fund of Kagawa University Research Promotion Program $2021$ (KURPP).

\section{A brief review of complex surfaces with trivial canonical bundle}\label{sec:SL2Csurf}
In dealing with complex surfaces with trivial canonical bundles in differential geometry,
it is crucial to note that a complex structure of such a surface is characterized by a $\der$-closed $\SL (2,\C )$-structure,
which becomes a holomorphic volume form with respect to the resulting complex structure (see Proposition $\ref{prop:SL2C_cpxstr}$).
Then with the help of a Hermitian form which forms an $\SU (2)$-structure together with an $\SL (2,\C )$-structure,
we can reduce the problem whether a given $\SL (2,\C )$-structure $\psi$ with small $\der\psi$ can be deformed into a $\der$-closed $\SL (2,\C)$-structure,
to the solvability of a partial differential equation given by $\eqref{eq:PDE}$.
The first two subsections provide more details.
We introduce the notions of $\SL (2,\C)$- and $\SU (2)$-structures in Section $\ref{subsec:geom_SL2C}$,
and state in Section $\ref{subsec:exist_SL2C}$ the existence result of an $\der$-closed $\SL (2,\C )$ structure
as a solution of the above differential equation $\eqref{eq:PDE}$ below.
Meanwhile, Section $\ref{subsec:K_X_SNC}$ describes the canonical bundle of an SNC complex surface according to \cite{Fr83},
and Section $\ref{subsec:Friedman}$ reviews the classification of semistable degenerations of $K3$ surfaces and smoothability result of $d$-semistable $K3$ surfaces from the algebro-geometric viewpoint.

\subsection{$\SL (2,\C)$- and $\SU (2)$-structures}\label{subsec:geom_SL2C}
In this subsection, we briefly review the notions and results in \cite{Doi09} without proofs.
(See also \cite{Goto04} for reference.)
\begin{definition}
Let $V$ be an oriented vector space of dimension $4$.
Then $\psi_0\in\wedge^2 V^*\otimes\C$ is an \emph{$\SL (2,\C )$-structure} on $V$ if $\psi_0$ satisfies
\begin{equation*}
\psi_0\wedge\overline{\psi}_0>0\quad\text{and}\quad\psi_0\wedge\psi_0 =0.
\end{equation*}
\end{definition}

An $\SL (2,\C )$-structure $\psi_0$ on $V$ gives a decomposition of $V\otimes\C$:
\begin{gather*}
V\otimes\C =V^{1,0}\oplus V^{0,1},\quad\text{where}\\
V^{0,1}=\set{\zeta\in V\otimes\C|\iota_\zeta\psi_0=0},\quad V^{1,0}=\overline{V^{0,1}},
\end{gather*}
and $\iota_\zeta$ is the interior multiplication by $\zeta$.
Thus if $v\in V$, then $v$ is uniquely written as $v=v^{1,0}+\overline{v^{1,0}}$, and $v\mapsto v^{1,0}$ gives an isomorphism between real vector spaces.
Then the composition
\begin{equation*}
I_{\psi_0}:V\xrightarrow{\cong}V^{1,0}\xrightarrow{\sqrt{-1}}V^{1,0}\xrightarrow{\cong}V
\end{equation*}
satisfies $I_{\psi_0}^2=-\id_V$.
Thus $\psi_0$ defines a complex structure $I_{\psi_0}$ on $V$ such that $\psi_0$ is a complex differential form of type $(2,0)$ with respect to $I_{\psi_0}$.

Let $\mathcal{A}_{\SL (2,\C )}(V)$ be the set of $\SL (2,\C )$-structures on $V$.
Then $\mathcal{A}_{\SL (2,\C )}(V)$ is an orbit space under the action of the orientation-preserving general linear group $\GL_+(V)$.
Since each $\psi\in\mathcal{A}_{\SL (2,\C )}(V)$ has isotropy group $\SL (2,\C )$,
there is a one-to-one correspondence from the orbit $\mathcal{A}_{\SL (2,\C )}(V)$ to the homogeneous space $\GL_+(V)/\SL (2,\C )$.

\begin{definition}
Let $M$ be an oriented $4$-manifold.
Then $\psi\in C^\infty(\wedge^2 T^*\! M\otimes\C)$ is an \emph{$\SL (2,\C )$-structure} on $M$ if $\psi$ satisfies
\begin{equation*}
\psi\wedge\overline{\psi}>0\quad\text{and}\quad\psi\wedge\psi =0.
\end{equation*}
\end{definition}

We define $\mathcal{A}_{\SL (2,\C )}(M)$ to be the fiber bundle which has fiber $\mathcal{A}_{\SL (2,\C )}(T_x M)$ over $x\in M$.
Then an $\SL (2,\C )$-structure can be regarded as a smooth section of $\mathcal{A}_{\SL (2,\C )}(M)$.

Since an $\SL (2,\C )$-structure $\psi$ on $M$ induces an $\SL (2,\C )$-structure on each tangent space,
$\psi$ defines an almost complex structure $I_\psi$ on $M$ such that $\psi$ is a $(2,0)$-form with respect to $I_\psi$.
\begin{lemma}[Grauert, Goto \cite{Goto04}]
Let $\psi$ be an $\SL (2,\C )$-structure on an oriented $4$-manifold $M$.
If $\psi$ is $\der$-closed, then $I_\psi$ is an integrable complex structure on $M$ with trivial canonical bundle
and $\psi$ is a holomorphic volume form on M with respect to $I_\psi$.
\end{lemma}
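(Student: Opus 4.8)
The plan is to invoke the Newlander--Nirenberg theorem: it suffices to show that the Nijenhuis tensor of the almost complex structure $I_\psi$ vanishes, after which the remaining assertions follow from $\der$-closedness almost immediately. First I would record the type decomposition of the exterior derivative on the almost complex manifold $(M,I_\psi)$. Writing $\der=\mu+\partial+\overline{\partial}+\overline{\mu}$, where the four operators have bidegrees $(2,-1)$, $(1,0)$, $(0,1)$, $(-1,2)$ respectively, the piece $\overline{\mu}$ is exactly the obstruction to integrability: the Nijenhuis tensor vanishes if and only if $\overline{\mu}=0$ on $(1,0)$-forms, i.e.\ $\der$ maps $(1,0)$-forms into $\Omega^{2,0}\oplus\Omega^{1,1}$ with no $(0,2)$-component.

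Next I would apply this decomposition to $\psi$, which is a nowhere-vanishing form of type $(2,0)$ for $I_\psi$ (nowhere-vanishing because $\psi\wedge\overline{\psi}>0$). Since $M$ has complex dimension $2$, the components $\mu\psi\in\Omega^{4,-1}$ and $\partial\psi\in\Omega^{3,0}$ vanish for trivial degree reasons, so
\begin{equation*}
\der\psi=\overline{\partial}\psi+\overline{\mu}\psi,\qquad \overline{\partial}\psi\in\Omega^{2,1},\ \ \overline{\mu}\psi\in\Omega^{1,2}.
\end{equation*}
As these two summands lie in distinct bidegrees, the hypothesis $\der\psi=0$ forces $\overline{\partial}\psi=0$ and $\overline{\mu}\psi=0$ separately.

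The key step is to upgrade $\overline{\mu}\psi=0$ to the vanishing of $\overline{\mu}$ on all $(1,0)$-forms. Here I would use that each graded piece of $\der$ is itself a graded derivation of the exterior algebra, since the wedge product respects the bigrading. Working locally, write $\psi=\theta^1\wedge\theta^2$ for a $(1,0)$-coframe $\{\theta^1,\theta^2\}$, which is possible because $\psi$ is a nowhere-vanishing section of the line bundle $\Lambda^{2,0}$. The derivation property gives $\overline{\mu}\psi=(\overline{\mu}\theta^1)\wedge\theta^2-\theta^1\wedge(\overline{\mu}\theta^2)$, and since $\Omega^{0,2}$ is a line one may write $\overline{\mu}\theta^i=a_i\,\overline{\theta^1}\wedge\overline{\theta^2}$, whence $\overline{\mu}\psi=a_1\,\theta^2\wedge\overline{\theta^1}\wedge\overline{\theta^2}-a_2\,\theta^1\wedge\overline{\theta^1}\wedge\overline{\theta^2}$. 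These two $(1,2)$-forms are linearly independent, so $\overline{\mu}\psi=0$ forces $a_1=a_2=0$; thus $\overline{\mu}$ kills the coframe and hence all of $\Omega^{1,0}$. By the remark above the Nijenhuis tensor vanishes, and Newlander--Nirenberg yields that $I_\psi$ is an integrable complex structure.

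Finally, with $I_\psi$ integrable, $\overline{\partial}$ is the genuine Dolbeault operator, so the already-established equation $\overline{\partial}\psi=0$ says that $\psi$ is a holomorphic section of the canonical bundle $K_M=\Lambda^{2,0}T^*M$. Being nowhere-vanishing, $\psi$ trivializes $K_M$ and is by definition a holomorphic volume form. I expect the only genuinely delicate point to be the third paragraph: the reduction from $\overline{\mu}\psi=0$ to full integrability is where the low complex dimension is essential, since it is exactly the rank-one-ness of $\Lambda^{2,0}$ and $\Lambda^{0,2}$ that lets the derivation identity pin down $\overline{\mu}$ on an entire coframe; the rest is bookkeeping with bidegrees together with an appeal to Newlander--Nirenberg.
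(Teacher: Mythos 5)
Your proof is correct, and every step checks out: the bigraded splitting $\der=\mu+\partial+\delbar+\overline{\mu}$, the vanishing of $\mu\psi$ and $\partial\psi$ for degree reasons, the separation of $\der\psi=0$ into $\delbar\psi=0$ and $\overline{\mu}\psi=0$, the rank-one argument forcing $a_1=a_2=0$, and the final appeal to Newlander--Nirenberg plus nowhere-vanishing of $\psi$ (which indeed follows from $\psi\wedge\overline{\psi}>0$). The only point you leave implicit is that $\overline{\mu}$ is $C^\infty$-linear on $1$-forms (so that killing a coframe kills all of $\Omega^{1,0}$); this follows in one line from the derivation property together with $\overline{\mu}f=0$ for functions, so it is not a genuine gap. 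Note, however, that the paper itself offers no proof to compare against: the lemma is quoted from Goto \cite{Goto04} and the section explicitly reviews results ``without proofs.'' The argument usually given for this statement (and the one your proof should be measured against) is shorter and bypasses the bigraded decomposition entirely: since $V^{0,1}=\set{\zeta\in TM\otimes\C | \iota_\zeta\psi =0}$ by the paper's definition of $I_\psi$, Cartan's formula gives, for sections $\zeta,\eta$ of $V^{0,1}$,
\begin{equation*}
\iota_{[\zeta ,\eta ]}\psi =L_\zeta\iota_\eta\psi -\iota_\eta L_\zeta\psi
=-\iota_\eta\left(\der\iota_\zeta\psi +\iota_\zeta\der\psi\right) =-\iota_\eta\iota_\zeta\der\psi =0,
\end{equation*}
so $\der\psi =0$ makes the distribution $V^{0,1}$ involutive, which is exactly the hypothesis of Newlander--Nirenberg; holomorphicity of $\psi$ then follows as in your last paragraph. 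What your route buys is an explicit identification of \emph{both} pieces of $\der\psi$ (the integrability obstruction $\overline{\mu}\psi$ and the Dolbeault piece $\delbar\psi$) at the outset, which makes the final step immediate; what the Cartan-formula route buys is brevity and no need for the derivation property of the graded pieces of $\der$. Both arguments use the low dimension in the same essential way: decomposability of $\psi$, i.e.\ that $\psi$ spans the line $\Lambda^{2,0}$.
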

The above lemma gives the following characterization of complex surfaces with trivial canonical bundle by $\der$-closed $\SL (2,\C )$-structures.
\begin{proposition}\label{prop:SL2C_cpxstr}
Let $M$ be an oriented $4$-manifold.
Then $M$ admits a complex structure with trivial canonical bundle if and only if $M$ admits a $\der$-closed $\SL (2,\C )$-structure.
\end{proposition}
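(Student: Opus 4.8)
The statement is a biconditional, so the plan is to treat the two implications separately; and in fact one of them is already essentially in hand. For the implication ``$M$ admits a $\der$-closed $\SL(2,\C)$-structure $\Rightarrow$ $M$ admits a complex structure with trivial canonical bundle'', I would simply invoke the preceding Lemma of Grauert and Goto: a $\der$-closed $\SL(2,\C)$-structure $\psi$ induces an integrable complex structure $I_\psi$ on $M$ with trivial canonical bundle, and $\psi$ itself is a nowhere-vanishing holomorphic volume form with respect to $I_\psi$. So this direction requires no further work beyond citing the Lemma.

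The substance is the converse. Suppose $M$ carries a complex structure $I$ with trivial canonical bundle, compatible with the given orientation. Triviality of $K_M=\wedge^{2,0}T^*\!M$ furnishes a nowhere-vanishing holomorphic section $\psi$, i.e.\ a nowhere-vanishing holomorphic $(2,0)$-form; the plan is to show that this $\psi$ is a $\der$-closed $\SL(2,\C)$-structure. First I would check the two pointwise algebraic conditions. Writing $\psi=f\,dz^1\wedge dz^2$ in local holomorphic coordinates with $f$ nowhere zero, the form $\psi\wedge\psi$ is of type $(4,0)$ and hence vanishes since $\dim_\C M=2$, giving $\psi\wedge\psi=0$. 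Likewise $\psi\wedge\overline{\psi}=|f|^2\,dz^1\wedge dz^2\wedge d\bar z^1\wedge d\bar z^2$ is a positive multiple of the orientation form determined by $I$, so $\psi\wedge\overline{\psi}>0$. Thus $\psi$ is an $\SL(2,\C)$-structure.

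Next I would verify $\der$-closedness, which is immediate from type considerations on a complex surface: $\der\psi=\del\psi+\delbar\psi$, where $\del\psi$ is a $(3,0)$-form and therefore vanishes because $\dim_\C M=2$, while $\delbar\psi=0$ by holomorphicity of $\psi$. Hence $\der\psi=0$. For consistency one also checks that the complex structure $I_\psi$ induced by $\psi$ as an $\SL(2,\C)$-structure recovers $I$: any $(0,1)$-vector for $I$ is annihilated under interior product with the $(2,0)$-form $\psi$, so the distribution $V^{0,1}=\set{\zeta\mid\iota_\zeta\psi=0}$ coincides with the $(0,1)$-space of $I$.

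The only delicate point, and the thing I would be most careful about, is the sign in the positivity condition $\psi\wedge\overline{\psi}>0$. Under a rescaling $\psi\mapsto c\psi$ this quantity changes by the factor $|c|^2>0$, so the condition cannot be repaired by multiplying $\psi$ by a constant; it genuinely requires the complex structure $I$ to induce the \emph{given} orientation of $M$. I would therefore either build orientation-compatibility into the hypothesis or, equivalently, equip $M$ with the orientation canonically determined by $I$, after which positivity holds automatically. Apart from this bookkeeping about orientations, the argument is pure pointwise linear algebra together with the type decomposition of differential forms, so no analytic difficulty arises.
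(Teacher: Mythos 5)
Your proof is correct and follows essentially the same route the paper intends: the forward implication is exactly the Grauert--Goto lemma, and the converse is the standard observation that a nowhere-vanishing holomorphic $(2,0)$-form on a complex surface is automatically a $\der$-closed $\SL(2,\C)$-structure (the paper leaves this direction implicit). Your attention to the orientation-compatibility needed for $\psi\wedge\overline{\psi}>0$ is a legitimate point of bookkeeping, consistent with the paper's convention that the complex structure induces the given orientation.
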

Thus, if we say that $X$ be a complex surface with canonical trivial bundle, then we understand that
$X$ consists of an underlying oriented $4$-manifold $M$ and a $\der$-closed $\SL (2,\C )$-structure $\psi$ on $M$ such that
$\psi$ induces a complex structure $I_\psi$ on $M$ and becomes a holomorphic volume form on $X=(M,I_\psi )$.

Let $X$ be a compact complex surface with trivial canonical bundle.
If $X$ is simply connected or $H^1(X,\mathcal{O}_X)=0$, then $X$ is called a $K3$ surface.
According to the Enriques-Kodaira classification, it is known that a compact complex surface with trivial canonical bundle is either
a complex torus, a primary Kodaira surface, or a $K3$ surface (see \cite{BHPV}, Chapter \rom{6}).

\renewcommand{\labelenumi}{\rm(\roman{enumi})}
\begin{definition}\label{def:SU(2)_V}
Let $V$ be an oriented vector space of dimension $4$.
Then $(\psi_0,\kappa_0)\in (\wedge^2 V^*\otimes\C)\oplus\wedge^2 V^*$ is an \emph{$\SU (2)$-structure} on $V$
if $(\psi_0,\kappa_0)$ satisfies the following conditions:
\begin{enumerate}
\item $\psi_0$ is an $\SL (2,\C )$-structure on $V$,
\item $\psi_0\wedge\kappa_0=0$,
\item a bilinear form $g_{(\psi_0,\kappa_0)}$ on $V$ defined by
$g_{(\psi_0,\kappa_0)}(I_{\psi_0}\cdot, \cdot )=\kappa_0(\cdot, \cdot )$ is positive definite, and
\item $2\,\kappa_0^2=\psi_0\wedge\overline{\psi}_0$.
\end{enumerate}
\end{definition}

Conditions (ii) and (iii) imply that $\kappa_0$ is a (1,1)-form associated with the inner product $g_{(\psi_0,\kappa_0)}$ on $V$
which is Hermitian in the sense that $g_{(\psi_0,\kappa_0)}(I_{\psi_0}\cdot ,I_{\psi_0}\cdot )=g_{(\psi_0,\kappa_0)}(\cdot ,\cdot )$.
Let $\mathcal{A}_{\SU (2)}(V)$ be the set of $\SU (2)$-structures on the oriented vector space $V$.
Then $\mathcal{A}_{\SU (2)}(V)$ is an orbit space
under the action of $\GL_+(V)$, from which there is a one-to-one correspondence to $\GL_+(V)/\SU (2)$.

For $(\psi_0,\kappa_0)\in\mathcal{A}_{\SU (2)}(V)$, we have the orthogonal decomposition with respect to $g_{(\psi_0,\kappa_0)}$
\begin{equation*}
\wedge^2 V^*=\wedge^2_+\oplus\wedge^2_-,
\end{equation*}
where $\wedge^2_+$ and $\wedge^2_-$ are the set of self-dual and anti-self-dual 2-forms
with respect to the Hodge dual operator $*_{g_{(\psi_0,\kappa_0)}}:\wedge^2 V^*\to\wedge^2 V^*$, respectively.
Then $\wedge^2_+$ is spanned by $\{\Real\psi_0,\Image\psi_0,\kappa_0\}$, where $\Real\psi_0$ and $\Image\psi_0$ are the real and imaginary part of
$\psi_0$, and $\wedge^2_-$ coincides with the set of primitive real $(1,1)$-forms with respect to $\kappa_0$.

We also have the orthogonal decomposition
\begin{align*}
(\wedge^2 V^*\otimes\C )\oplus\wedge^2 V^*&\cong T_{(\psi_0,\kappa_0)}((\wedge^2 V^*\otimes\C )\oplus\wedge^2 V^*)\\
&=T_{(\psi_0,\kappa_0)}\mathcal{A}_{\SU (2)}(V)\oplus T_{(\psi_0,\kappa_0)}^\perp\mathcal{A}_{\SU (2)}(V),
\end{align*}
where $T_{(\psi_0,\kappa_0)}^\perp\mathcal{A}_{\SU (2)}(V)$ is the orthogonal complement to $T_{(\psi_0,\kappa_0)}\mathcal{A}_{\SU (2)}(V)$
with respect to $g_{(\psi_0,\kappa_0)}$.
The next lemma is crucial in solving the partial differential equation in the proof of Theorem \ref{thm:existence}.
\begin{lemma}\label{thm:ASD}
The tangent space of $\mathcal{A}_{\SU (2)}$ at $(\psi_0,\kappa_0)$ contains anti-self-dual subspaces:
\begin{equation*}
(\wedge^2_-\otimes\C )\oplus\wedge^2_-\subset T_{(\psi_0,\kappa_0)}\mathcal{A}_{\SU (2)}(V).
\end{equation*}
\end{lemma}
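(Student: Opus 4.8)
The plan is to identify the tangent space with the image of the infinitesimal $\GL_+(V)$-action and then exhibit a distinguished $9$-dimensional subspace of $\mathfrak{gl}(V)=\End(V)$ whose image is exactly the anti-self-dual summand $(\wedge^2_-\otimes\C)\oplus\wedge^2_-$.

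First I would use that $\mathcal{A}_{\SU (2)}(V)\cong\GL_+(V)/\SU (2)$ is a single $\GL_+(V)$-orbit, so that its tangent space at $(\psi_0,\kappa_0)$ is the image of the linear map
\[
\rho\colon\End(V)\longrightarrow(\wedge^2V^*\otimes\C)\oplus\wedge^2V^*,\qquad A\longmapsto\bigl(\rho(A)\psi_0,\ \rho(A)\kappa_0\bigr),
\]
where $\rho(A)$ denotes the natural derivation action of $A\in\mathfrak{gl}(V)$ on forms (the derivative at $t=0$ of pullback by $\exp(tA)$). It then suffices to realize every element of $(\wedge^2_-\otimes\C)\oplus\wedge^2_-$ as $(\rho(A)\psi_0,\rho(A)\kappa_0)$ for some $A$.

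Next, using the metric $g=g_{(\psi_0,\kappa_0)}$ I would split $\End(V)=\mathfrak{so}(V)\oplus\R\,\id\oplus\mathrm{Sym}_0(V)$ into its skew, scalar, and trace-free symmetric parts, and restrict $\rho$ to $\mathrm{Sym}_0(V)$. The key linear-algebra fact is that a trace-free symmetric endomorphism interchanges self-dual and anti-self-dual $2$-forms; since $\kappa_0$, $\Real\psi_0$, $\Image\psi_0$ span $\wedge^2_+$, this forces $\rho(S)\kappa_0$ and $\rho(S)\psi_0$ to be anti-self-dual for every $S\in\mathrm{Sym}_0(V)$, so that $\Phi:=\rho|_{\mathrm{Sym}_0(V)}$ maps into $(\wedge^2_-\otimes\C)\oplus\wedge^2_-$. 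I would verify this either representation-theoretically, via $\SO(4)\cong(\SU (2)\times\SU (2))/\{\pm1\}$ and $\mathrm{Sym}_0(V)\cong(3,3)$ (whose equivariant action on $\wedge^2_+=(3,1)$ can only land in $\wedge^2_-=(1,3)$, as $(3,3)\otimes(3,1)$ contains $(1,3)$ but not $(3,1)$), or, more concretely, by a direct computation in an oriented orthonormal frame $e_1,\dots,e_4$ adapted to $I_{\psi_0}$. Writing $e^{ij}=e^i\wedge e^j$, one has $\kappa_0=e^{12}+e^{34}$, $\Real\psi_0=e^{13}-e^{24}$, $\Image\psi_0=e^{14}+e^{23}$, and a short calculation shows that the self-dual components of $\rho(S)\kappa_0$, $\rho(S)\Real\psi_0$, $\rho(S)\Image\psi_0$ are each proportional to $\mathrm{tr}\,S$, hence vanish on $\mathrm{Sym}_0(V)$.

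Finally, since $\dim_\R\mathrm{Sym}_0(V)=9=\dim_\R\bigl((\wedge^2_-\otimes\C)\oplus\wedge^2_-\bigr)$, the map $\Phi$ is onto if and only if it is injective, and the main obstacle is precisely to prove $\ker\Phi=0$, i.e. that no nonzero trace-free symmetric $S$ annihilates both $\psi_0$ and $\kappa_0$. In the adapted frame this amounts to reading off the coefficients of $\rho(S)\kappa_0$ and $\rho(S)\psi_0$ against the basis $e^{12}-e^{34}$, $e^{13}+e^{24}$, $e^{14}-e^{23}$ of $\wedge^2_-$: the vanishing conditions pair each off-diagonal entry of $S$ with opposite signs coming from $\kappa_0$ and from $\psi_0$ (forcing all six off-diagonal entries to be zero), while on the diagonal they impose three independent relations which, together with $\mathrm{tr}\,S=0$, force $S=0$. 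This gives $\ker\Phi=0$, so $\Phi$ is an isomorphism onto $(\wedge^2_-\otimes\C)\oplus\wedge^2_-$; in particular this subspace is contained in $\rho(\End(V))=T_{(\psi_0,\kappa_0)}\mathcal{A}_{\SU (2)}(V)$, which is the assertion of the lemma.
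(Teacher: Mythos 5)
Your proposal is correct, and it in fact proves the sharper statement that the restriction of the infinitesimal action to the trace-free symmetric part $\mathrm{Sym}_0(V)\subset\End(V)$ is an isomorphism onto $(\wedge^2_-\otimes\C)\oplus\wedge^2_-$. Note that this paper states Lemma \ref{thm:ASD} without proof (Section \ref{subsec:geom_SL2C} explicitly reviews the results of \cite{Doi09} ``without proofs''), so there is no in-paper argument to compare against; your route---tangent space of the orbit as the image of the infinitesimal $\GL_+(V)$-action, the observation that $\mathrm{Sym}_0(V)$ carries the self-dual span of $\{\kappa_0,\Real\psi_0,\Image\psi_0\}$ into anti-self-dual forms, and surjectivity via the dimension count $9=9$ plus injectivity---is exactly the standard orbit computation underlying the cited result, and every step checks out (in your adapted frame, the vanishing of $\rho(S)\kappa_0$, $\rho(S)\Real\psi_0$, $\rho(S)\Image\psi_0$ pairs the off-diagonal entries as $S_{23}\pm S_{14}$, $S_{13}\pm S_{24}$, $S_{12}\pm S_{34}$ and gives $S_{22}=S_{33}=S_{44}=-S_{11}$, which together with $\mathrm{tr}\,S=0$ forces $S=0$).
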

\begin{proof}
See \cite{Doi09}, Lemma $2.6$.
\end{proof}

\begin{definition}\label{thm:rho}
Let $V$ be an oriented vector space of dimension $4$.
We define a neighborhood of $\mathcal{A}_{\SU (2)}(V)$ in $(\wedge^2 V^*\otimes\C)\oplus\wedge^2 V^*$ by
\begin{equation*}
\mathcal{T}_{\SU (2)}(V)=\Set{(\psi_0+\alpha, \kappa_0+\beta )|\substack{\begin{aligned}
&(\psi_0, \kappa_0)\in\mathcal{A}_{\SU (2)}(V),\text{ and}\\
&(\alpha,\beta )\in T_{(\psi_0,\kappa_0)}^\perp\mathcal{A}_{\SU (2)}(V)\\
&\text{with }\norm{(\alpha,\beta )}_{g_{(\psi_0,\kappa_0)}}<\rho\end{aligned}}},
\end{equation*}
where $\rho$ is a positive constant and
$\norm{(\alpha,\beta)}_{g_{(\psi_0,\kappa_0)}}=\norm{\alpha}_{g_{(\psi_0,\kappa_0)}}+\norm{\beta}_{g_{(\psi_0,\kappa_0)}}$.
In other words, $\mathcal{T}_{\SU (2)}(V)$ is defined to be the $\rho$-neighborhood of $\mathcal{A}_{\SU (2)}(V)$ in $(\wedge^2 V^*\otimes\C)\oplus\wedge^2 V^*$.
\end{definition}

\begin{lemma}\label{lem:rho*}
There exists a positive constant $\rho_*$ such that if $\rho <\rho_*$ then any $(\psi',\kappa')\in\mathcal{T}_{\SU (2)}(V)$ can be uniquely written as
$(\psi_0+\alpha,\kappa_0+\beta)$, where $(\psi_0,\kappa_0)\in\mathcal{A}_{\SU (2)}(V)$ and
$(\alpha,\beta )\in T_{(\psi_0,\kappa_0)}^\perp\mathcal{A}_{\SU (2)}(V)$.
\end{lemma}
\begin{proof}
Let $T^\perp\!\mathcal{A}_{\SU (2)}(V)$ be the vector bundle whose fiber over $(\psi_0,\kappa_0)$ is $T_{(\psi_0,\kappa_0)}^\perp\mathcal{A}_{\SU (2)}(V)$.
Let us define a map $f:T^\perp\mathcal{A}_{\SU (2)}(V)\to(\wedge^2 V^*\otimes\C )\oplus\wedge^2 V^*$ by
\begin{gather*}
f:((\psi_0,\kappa_0),(\alpha ,\beta ))\mapsto (\psi_0+\alpha ,\kappa_0+\beta )\\
\text{for }(\psi_0,\kappa_0)\in\mathcal{A}_{\SU (2)}(V)\text{ and }(\alpha ,\beta )\in T_{(\psi_0,\kappa_0)}^\perp\mathcal{A}_{\SU (2)}(V).
\end{gather*}
Then by the $\epsilon$-neighborhood theorem in \cite{GP}, Chapter $2$, Section $3$,
there exists a positive function $\epsilon$ on $\mathcal{A}_{\SU (2)}(V)$ such that 
$f$ maps the $\epsilon$-neighborhood $\mathcal{A}^\epsilon_{\SU (2)}(V)$ of $\mathcal{A}_{\SU (2)}(V)$ in $T^\perp\!\mathcal{A}_{\SU (2)}(V)$
diffeomorphically onto its image in $(\wedge^2 V^*\otimes\C )\oplus\wedge^2 V^*$,
and $f^{-1}$ gives a unique orthogonal decomposition of $(\psi',\kappa')\in f(\mathcal{A}^\epsilon_{\SU (2)}(V))$.
Since $a\in\GL_+(V)$ induces an isometry from each neighborhood of $(\psi_0,\kappa_0)\in\mathcal{A}_{\SU (2)}(V)$ onto a neighborhood of
$(a^*\psi_0,a^*\kappa_0)$,
we can take $\epsilon$ to be a constant on $\mathcal{A}_{\SU (2)}(V)$.
Hence, the assertion holds if we take $\mathcal{T}_{\SU (2)}(V)=f(\mathcal{A}^\epsilon_{\SU (2)}(V))$ and $\rho^*=\epsilon$.
\end{proof}
Lemma \ref{lem:rho*} implies that for $\rho <\rho_*$ the projection $\Theta :\mathcal{T}_{\SU (2)}(V)\to\mathcal{A}_{\SU (2)}(V)$ is well-defined.
We also denote by $\Theta_1$ the composition of
$\Theta$ and the projection $\mathcal{A}_{\SU (2)}(V)\to\mathcal{A}_{\SL (2,\C )}(V)$.

\begin{definition}\label{def:SU(2)_M}
Let $M$ be an oriented $4$-manifold.
Then $(\psi ,\kappa)\in C^\infty(\wedge^2 T^*\! M\otimes\C)\oplus C^\infty(\wedge^2 T^*\! M)$ is an \emph{$\SU (2)$-structure} on $M$
if the restriction $(\restrict{\psi}{x},\restrict{\kappa}{x})$ is an $\SU (2)$-structure on $T_x M$ for all $x\in M$.
\end{definition}

Define $\mathcal{A}_{\SU (2)}(M)$ to be the fiber bundle whose fiber over $x\in M$ is $\mathcal{A}_{\SU (2)}(T_x M)$.
Then an $\SU (2)$-structure can be regarded as a smooth section of $\mathcal{A}_{\SU (2)}(M)$.

If $\psi$ and $\kappa$ are both $\der$-closed, then $X=(M,I_\psi ,\kappa)$ is a K\"{a}hler surface with trivial canonical bundle.
Moreover, the Ricci curvature of the K\"{a}hler metric $g$ vanishes due to condition (iv) of Definition \ref{def:SU(2)_V}.

\begin{definition}\label{def:Theta}
Let $M$ be an oriented $4$-manifold.
Choose $\rho <\rho_*$ so that the projection $\Theta$ is well-defined.
We define $\mathcal{T}_{\SU (2)}(M)$ to be the fiber bundle whose fiber over $x\in M$ is $\mathcal{T}_{\SU (2)}(T_x M)$,
and denote by $\Theta$ the projection from $\mathcal{T}_{\SU (2)}(M)$ to $\mathcal{A}_{\SU (2)}(M)$.
\end{definition}

\subsection{Existence theorem of $\der$-closed $\SL (2,\C )$-structures}\label{subsec:exist_SL2C}
We are now in a position to state the following existence theorem of a complex structure with trivial canonical bundle.
\renewcommand{\labelenumi}{\rm(\roman{enumi})}
\begin{theorem}[\cite{Doi09}, Theorems $4.1$ and $4.2$]\label{thm:existence}
Let $M$ be a smooth $4$-manifold and $\lambda ,\mu$, and $\nu$ positive constants.
Then there exists a positive constant $\epsilon_*$ such that whenever $0<\epsilon <\epsilon_*$, the following is true.

Let $(\psi ,\kappa )$ be an $\SU (2)$-structure on $M$ such that the associated metric $g$ is a complete Riemannian metric on $M$.
Suppose that the injectivity radius $\delta(g)$ and Riemann curvature $R(g)$ satisfy
$\delta(g)\geqslant\mu\epsilon$ and $\Norm{R(g)}\leqslant\nu\epsilon^{-2}$,
and that $\phi$ is a smooth complex $2$-form on $M$ with $\der\psi +\der\phi =0$, and
\begin{equation*}
\Norm{\phi}_{L^2}\leqslant\lambda\,\epsilon^3,\quad\Norm{\der\phi}_{L^8}\leqslant\lambda\,\epsilon ,\quad\text{and}\quad
\Norm{\der\kappa}_{L^8}\leqslant\lambda\,\epsilon^{-1/2}.
\end{equation*}
Let $\rho$ be as in Definition $\ref{thm:rho}$.
Then there exists $\eta\in C^\infty(\wedge^2_- T^*\! M\otimes\C )$ with $\Norm{\eta}_{C^0}<\rho$ such that $\der\Theta_1(\psi +\eta,\kappa) =0$.
Hence, the manifold $M$ admits a complex structure with trivial canonical bundle.
\end{theorem}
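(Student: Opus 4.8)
\emph{Proof strategy.} The plan is to find the correcting form $\eta$ by a contraction--mapping argument for a nonlinear elliptic equation extracted from $\der\Theta_1(\psi+\eta,\kappa)=0$. First I would Taylor--expand the projection. Since $(\psi,\kappa)$ is pointwise a genuine $\SU(2)$-structure we have $\Theta_1(\psi,\kappa)=\psi$, and by Lemma~\ref{thm:ASD} the perturbation direction $(\eta,0)$ with $\eta\in\wedge^2_-\otimes\C$ is tangent to $\mathcal{A}_{\SU(2)}$, so the differential of $\Theta_1$ in this direction is the identity. Hence
\begin{equation*}
\Theta_1(\psi+\eta,\kappa)=\psi+\eta+F(\eta),\qquad F(\eta)=O(\norm{\eta}^2),
\end{equation*}
where the quadratic--and--higher remainder $F$ has coefficients controlled by the second fundamental form of the orbit $\mathcal{A}_{\SU(2)}\hookrightarrow(\wedge^2 T^*\! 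M\otimes\C)\oplus\wedge^2 T^*\! M$ and by $\kappa$ (through the metric $g_{(\psi,\kappa)}$ defining $\Theta$). Using the hypothesis $\der\psi=-\der\phi$, the equation $\der\Theta_1(\psi+\eta,\kappa)=0$ becomes
\begin{equation*}
\der\eta=\der\phi-\der F(\eta).
\end{equation*}

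Next I would solve the resulting linear problem with uniform bounds. The crucial structural point is that $\Theta_1(\psi+\eta,\kappa)$ is, for its own induced complex structure, a form of type $(2,0)$; since $\wedge^{3,0}=0$ in complex dimension two, $\der$ of such a form has only a $(2,1)$-component, so $\der\Theta_1(\psi+\eta,\kappa)=0$ is effectively an equation valued in a rank-two complex bundle, while $\eta$ ranges over the rank-three bundle $\wedge^2_-\otimes\C$. After fixing the one extra complex direction by a gauge (coclosedness) condition, the linearization becomes an elliptic first-order operator $L$, which I would invert by the Green's operator of $L^*L$, writing $\eta=\mathcal{P}\bigl(\der\phi-\der F(\eta)\bigr)$. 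The hypotheses $\delta(g)\geqslant\mu\epsilon$ and $\Norm{R(g)}\leqslant\nu\epsilon^{-2}$ are used precisely so that the rescaled metric $\epsilon^{-2}g$ has injectivity radius $\geqslant\mu$ and curvature $\leqslant\nu$: on this unit-scale bounded geometry the local elliptic and Sobolev estimates are uniform, so $\mathcal{P}$ is bounded with $\epsilon$-independent operator norm in the rescaled $L^p$ and H\"older norms. The exponent $8>4=\dim M$ is chosen so that $W^{1,8}\hookrightarrow C^0$, converting the integral bounds into the pointwise bound $\Norm{\eta}_{C^0}<\rho$ required for $\Theta_1$ to be defined (Lemma~\ref{thm:rho*}).

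I would then close the argument by a fixed point. On a ball $B=\set{\Norm{\eta}\leqslant K\epsilon^{s}}$ in the appropriate Banach space, set $\Phi(\eta)=\mathcal{P}(\der\phi-\der F(\eta))$. Using $\Norm{\der\phi}_{L^8}\leqslant\lambda\epsilon$ for the inhomogeneous term, the quadratic estimate $\Norm{F(\eta)-F(\eta')}\lesssim(\Norm{\eta}+\Norm{\eta'})\Norm{\eta-\eta'}$ for the nonlinearity, and $\Norm{\der\kappa}_{L^8}\leqslant\lambda\epsilon^{-1/2}$ together with $\Norm{\phi}_{L^2}\leqslant\lambda\epsilon^3$ to control the $\kappa$-dependence of $F$ and the commutators between $\der$ and $\mathcal{P}$, I would verify that for $\epsilon<\epsilon_*$ small enough $\Phi$ maps $B$ into itself and contracts. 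Banach's fixed-point theorem then produces a unique small $\eta$ solving $\der\eta=\der\phi-\der F(\eta)$, i.e.\ $\der\Theta_1(\psi+\eta,\kappa)=0$; elliptic regularity bootstraps $\eta$ to $C^\infty$, and shrinking $\epsilon_*$ guarantees $\Norm{\eta}_{C^0}<\rho$. By Proposition~\ref{prop:SL2C_cpxstr} the resulting $\der$-closed $\SL(2,\C)$-structure equips $M$ with a complex structure with trivial canonical bundle.

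The main obstacle is precisely the uniform linear estimate together with the bookkeeping of powers of $\epsilon$. On a noncompact complete manifold the existence and boundedness of $\mathcal{P}$ is not automatic, and one must work in weighted norms adapted to the bounded-geometry (neck) structure so that $\mathcal{P}$ is $\epsilon$-uniformly bounded while the source terms retain the stated small norms. The three hypotheses $\Norm{\phi}_{L^2}\leqslant\lambda\epsilon^3$, $\Norm{\der\phi}_{L^8}\leqslant\lambda\epsilon$ and $\Norm{\der\kappa}_{L^8}\leqslant\lambda\epsilon^{-1/2}$ are finely tuned so that, after rescaling by $\epsilon^{-2}$, the inhomogeneous term, the nonlinear remainder and the $\kappa$-torsion all land at compatible orders and the self-improving radius $K\epsilon^{s}$ is consistent; verifying this delicate matching---rather than any single estimate in isolation---is where the real work lies.
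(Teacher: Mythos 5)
Your proposal is correct and takes essentially the same route as the paper's sketch (and the full proof in \cite{Doi09} that it cites): the same reduction to the quadratic equation \eqref{eq:PDE} via Lemma~\ref{thm:ASD}, the same use of the injectivity-radius and curvature hypotheses to obtain $\epsilon$-uniform elliptic/Sobolev estimates with $L^8_1\hookrightarrow C^0$, and the same elliptic bootstrap to smoothness. Your contraction-mapping formulation, with a gauge-fixed Green's operator inverting the linearization, is just a repackaging of the paper's Picard iteration, whose linear steps are solved by Hodge theory with each iterate normalized to be $L^2$-orthogonal to $\mathcal{H}^2_-$.
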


Note that the Hermitian form $\kappa$ in Theorem $\ref{thm:existence}$, which forms an $\SU (2)$-structure together with $\psi$,
only plays an auxiliary role for obtaining a $\der$-closed $\SL (2,\C )$-structure.
Since we only require a mild estimate for $\kappa$, it is not difficult to find such a $\kappa$.

The proof of Theorem $\ref{thm:existence}$ is summarized as follows.
The equation $\der\Theta_1(\psi +\eta,\kappa) =0$ is rewritten as
\begin{equation}\label{eq:PDE}
\der\eta =\der\phi +\der F(\eta )
\end{equation}
when $\eta\in C^\infty(\wedge^2_- T^*\! M\otimes\C )$, where the term $F(\eta )$ is defined by
\begin{equation*}
\Theta_1 (\psi +\eta ,\kappa )=\psi +\eta +F(\eta ).
\end{equation*}
We note that $F(\eta )$ is quadratic in $\eta\in C^\infty(\wedge^2_- T^*\! M\otimes\C )$ due to Lemma \ref{thm:ASD}.
To solve $\eqref{eq:PDE}$ we consider the recurrence equations
\begin{equation*}
\der\eta_j =\der\phi +\der F(\eta_{j-1})
\end{equation*}
with $j>0$ and $\eta_0=0$.
According to the Hodge theory, there exists a unique $\eta_j\in C^\infty(\wedge^2_- T^*\! M\otimes\C )$ for each $j>0$
such that $\eta_j-\phi-F(\eta_{j-1})$ is $L^2$-orthogonal to $\mathcal{H}^2_-$.
Then one can show that the sequence $\{\eta_j\}$ converges to a unique $\eta$ in the Sobolev space $L^8_1(\wedge^2_- T^*\! M\otimes\C )$.
The hypothesis on the injectivity radius and Riemann curvature in Theorem \ref{thm:existence} is a technical assumption to evaluate
$\Norm{\nabla\chi}_{L^8}$ for $\chi\in C^\infty (\wedge^2_-T^*\! M)$ in terms of $\Norm{\der\chi}_{L^8}$ and $\Norm{\chi}_{L^2}$,
and then $\Norm{\chi}_{C^0}$ in terms of $\Norm{\nabla\chi}_{L^8}$ and $\Norm{\chi}_{L^2}$.
Regularity of $\eta$ follows from the ellipticity of $\eqref{eq:PDE}$ when it is considered as an equation on $L^8(V)$ with $V=\smalloplus_{i=0}^4 T^*\! M$.
Then using the Sobolev embedding $L^8_1\hookrightarrow C^{0,1/2}$ and the standard bootstrapping method, we prove that $\eta$ is smooth.
For further details, see \cite{Doi09}, Section $4$ (see also \cite{Joyce}, Chapter $13$).

\subsection{Canonical bundle of an SNC complex surface}\label{subsec:K_X_SNC}
According to \cite{Fr83}, Remark $2.11$, the canonical bundle $K_X$ of an SNC complex surface $X$ is described as follows.
\begin{definition}\label{def:K_X_SNC}
Let $X=\smallcup_{i=1}^NX_i$ be an SNC complex surface with irreducible components $X_i$,
given by gluing isomorphisms $f_{ij}:D_{ij}\to D_{ji}$ for all $i,j$ with $i\leqslant j$ and $j\in I_i$,
where we distinguish $D_{ij}$ and $D_{ji}$ by regarding $D_{ij}\subset X_i$ and $D_{ji}\subset X_j$.
Also, we understand that if $D_{ij}$ has more than one irreducible component, then $f_{ij}$ is a union of the corresponding isomorphisms,
and if $D_{ii}\neq\emptyset$, then we divide the irreducible components of $D_{ii}$ into two as $D_{ii}=D'_{ii}\cup D''_{ii}$
and consider $f_{ii}$ as an isomorphism from $D'_{ii}$ to $D''_{ii}$.
Define line bundles $L_i$ on $X_i$ and $L_{ij}$ on $D_{ij}$ by
\begin{align*}
L_i=\restrict{K_X}{X_i}=K_{X_i}\otimes [D_i]\quad\text{and}\quad L_{ij}=\restrict{L_i}{D_{ij}}.
\end{align*}
By the adjunction formula, we calculate $L_{ij}$ as
\begin{align*}
L_{ij}&=\restrict{L_i}{D_{ij}}=\restrict{K_{X_i}}{D_{ij}}\otimes\restrict{[D_i]}{D_{ij}}\\
&=\restrict{\left(K_{X_i}\otimes [D_{ij}]\right)}{D_{ij}}
\otimes\restrict{\left[ \smallsum_{k\in I_{ij}}D_{ik}\right]}{D_{ij}}\cong K_{D_{ij}}\otimes [T_{ij}].
\end{align*}
Also, the restriction of $L_i$ to $D_{ij}$ is given by the Poincar\'{e} residue map.
Then the canonical bundle of $X$ is given by the collection of the line bundles $L_i$ on $X_i$,
together with the gluing isomorphisms $-f_{ij}^*:L_{ji}\to L_{ij}$,
that is, a set $\{ s_i\}_i$ of local sections $s_i\in H^0(X_i,L_i)$ together define a global section $s\in H^0(X,K_X)$ if and only if
\begin{equation*}
\residue_{D_{ij}}s_i=-f_{ij}^*\left(\residue_{D_{ji}}s_j\right)\quad\text{for all }i,j\text{ with }i\leqslant j,j\in I_i.
\end{equation*}
\end{definition}

The minus sign in the gluing isomorphisms $-f_{ij}^*:L_{ji}\to L_{ij}$ naturally arises when we consider a local model as follows.
Consider a local embedding $\{\zeta^1\zeta^2=0\}$ (resp. $\{\zeta^1\zeta^2\zeta^3=0\}$) of $X_1$ and $X_2$ around $p\in D_{12}\setminus T_{12}$
(resp. $X_1,X_2$, and $X_3$ around $p\in T_{123}$) into $\C^3$ with local representations $X_i=\{\zeta^i=0\}$.
Let $\Omega_0$ be a meromorphic volume form on $\C^3$ given by
\begin{equation*}
\Omega_0 =\frac{\der\zeta^1}{\zeta^1}\wedge\frac{\der\zeta^2}{\zeta^2}\wedge\eta^3,\quad\text{where }\eta^3=
\begin{dcases}
\der\zeta^3&\text{around }p\in D_{12}\setminus T_{12},\\
\der\zeta^3/\zeta^3&\text{around }p\in T_{12}.
\end{dcases}
\end{equation*}
Then $\Omega_0$ induces local meromorphic volume forms $\Omega_{i,0}$ on $X_i$ for $i=1,2$ given by
\begin{equation*}
\Omega_{i,0}=\residue_{X_i}\Omega_0=(-1)^{i'}\frac{\der\zeta^{i'}}{\zeta^{i'}}\wedge\eta^3,
\quad\text{where we set }i'=3-i,
\end{equation*}
which locally generate $H^0(X_i,L_i)$.
Consequently, these local generators $\Omega_{i,0}$ of $H^0(X_i,L_i)$ satisfy
\begin{equation*}
\residue_{D_{12}}\Omega_{1,0}=-\eta^3=-\residue_{D_{21}}\Omega_{2,0},
\end{equation*}
which explains the minus sign in the gluing isomorphisms $-f_{ij}^*:L_{ji}\to L_{ij}$ of $K_X$.

We immediately obtain the following result from Definition $\ref{def:K_X_SNC}$ because a section of $H^0(X_i,L_i)$ is given by a meromorphic volume form on $X_i$ with a single pole along $D_i$.
\begin{lemma}\label{lem:cond_K_X=O}
Let $X=\smallcup_{i=1}^N X_i$ be an SNC complex surface given by gluing isomorphisms $f_{ij}:D_{ij}\to D_{ji}$ for all $i,j$ with $i\leqslant j$ and $j\in I_i$.
Then the canonical bundle of $K_X$ is trivial if and only if the following conditions hold:
\renewcommand{\theenumi}{\roman{enumi}}
\begin{itemize}
\item each $D_i=\sum_{j\in I_i}D_{ij}$ is an anticanonical divisor on $X_i$; and
\item there exists a meromorphic volume form $\Omega_i$ on each $X_i$ with a single pole along $D_i$
such that
\begin{equation*}
\residue_{D_{ij}}\Omega_i =-f_{ij}^*\left(\residue_{D_{ji}}\Omega_j\right)\quad\text{for all }i,j\text{ with }i\leqslant j,j\in I_i.
\end{equation*}
\end{itemize}
\end{lemma}
This lemma implies that if we have $D_{ij}=D_{ji}$ for all $i,j$,
then conditions $\mathrm{(ii)}$ and $\mathrm{(iii)}$ of Theorem $\ref{thm:smoothing}$ are necessary and sufficient for the canonical bundle $K_X$ of $X$ to be trivial.
Also, Definition $\ref{def:K_X_SNC}$ leads to the following result, which is useful for computing $H^0(X,K_X)$.
\begin{proposition}\label{prop:gl.sect_K_X}
Let $X=\smallcup_{i=1}^NX_i$ be an SNC complex surface
given by gluing isomorphisms $f_{ij}:D_{ij}\to D_{ji}$ for all $i,j$ with $i\leqslant j$ and $j\in I_i$.
Then we have an exact sequence
\begin{equation*}
\vcenter{\xymatrix{
0\ar[r]&H^0(X,K_X)\ar[r]&\displaystyle\smalloplus_{i=1}^N H^0(X_i,L_i)
\ar[rr]^-{\displaystyle\rho =\smashoperator{\smalloplus_{i\leqslant j,j\in I_i}}\rho_{ij}}
&&\displaystyle\smalloplus_{i\leqslant j,j\in I_i}H^0(D_{ij},L_{ij}),
}}
\end{equation*}
where $L_i,L_{ij}$, and $\rho_{ij}$ are given by
\begin{gather*}
L_i=\restrict{K_X}{X_i}=K_{X_i}\otimes [D_i],
\quad L_{ij}=\restrict{L_i}{D_{ij}}=K_{D_{ij}}\otimes [T_{ij}],\\
\rho_{ij}:H^0(X_i,L_i)\oplus H^0(X_j,L_j)\to H^0(D_{ij},L_{ij}),\quad
(\Omega_i,\Omega_j)\mapsto\residue_{D_{ij}}\Omega_i+f_{ij}^*\left(\residue_{D_{ji}}\Omega_j\right) .
\end{gather*}
Hence, $H^0(X,K_X)$ is given by the kernel of the linear map $\rho$.
In particular, if $D_i=\sum_{j\in I_i}D_{ij}$ is an anticanonical divisor for all $i$,
then we have $L_i\cong\C$ and $L_{ij}\cong\C$ for all $i,j$, so that $\rho$ defines a linear map from $\C^N$ to $\C^M$, 
where $M$ is the number of double curves $D_{ij}$ with $i\leqslant j$.
\end{proposition}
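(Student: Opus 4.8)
The statement is essentially a reformulation of the gluing description of $K_X$ in Definition \ref{def:K_X_SNC}, so the plan is to unwind that description and verify exactness of the displayed sequence at its two relevant positions. First I would make the maps explicit. The injection $H^0(X,K_X)\hookrightarrow\smalloplus_{i=1}^N H^0(X_i,L_i)$ is the restriction map $s\mapsto(\restrict{s}{X_i})_i$, which is well defined because $L_i=\restrict{K_X}{X_i}$; the isomorphisms $L_i\cong\C$ and $L_{ij}\cong\C$ recorded in the statement are the adjunction computations already carried out in Definition \ref{def:K_X_SNC} and play no role in the exactness argument, so I would keep $L_i$ and $L_{ij}$ as line bundles throughout. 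Exactness at $H^0(X,K_X)$ is then immediate: since $X=\smallcup_{i=1}^N X_i$, a global section restricting to $0$ on every component $X_i$ vanishes identically, so the restriction map is injective.

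The content of the proposition is exactness at the middle term, and here I would quote Definition \ref{def:K_X_SNC} directly. That definition says a collection $(\Omega_i)_i$ with $\Omega_i\in H^0(X_i,L_i)$ is the restriction of a global section of $K_X$ precisely when the residues agree through the gluing isomorphisms $-f_{ij}^*\colon L_{ji}\to L_{ij}$, i.e.\ when $\residue_{D_{ij}}\Omega_i=-f_{ij}^*(\residue_{D_{ji}}\Omega_j)$ for all $i\leqslant j$ with $j\in I_i$. Rewriting this condition as $\residue_{D_{ij}}\Omega_i+f_{ij}^*(\residue_{D_{ji}}\Omega_j)=0$, which is exactly $\rho_{ij}(\Omega_i,\Omega_j)=0$, identifies the image of the restriction map with $\Ker\rho$. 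This yields exactness and hence the description of $H^0(X,K_X)$ as $\Ker\rho$.

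I do not expect a genuine conceptual obstacle, since the proposition is a direct transcription of Definition \ref{def:K_X_SNC}; the only points needing care are bookkeeping. One must track the sign convention so that the minus sign in the gluing isomorphisms $-f_{ij}^*$ turns into the plus sign appearing in $\rho_{ij}$, and one must treat the self-intersection case $D_{ii}\neq\emptyset$ by invoking the splitting $D_{ii}=D'_{ii}\cup D''_{ii}$ with $f_{ii}\colon D'_{ii}\to D''_{ii}$, so that the diagonal ($i=j$) summands of $\rho$ are well defined. Both conventions are already fixed in Definition \ref{def:K_X_SNC}, so these present no real difficulty.
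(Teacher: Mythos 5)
Your proposal is correct and matches the paper's intent exactly: the paper gives no separate proof, stating only that the proposition follows immediately from Definition \ref{def:K_X_SNC}, and your argument—injectivity of restriction to the components plus the identification of the residue-matching condition $\residue_{D_{ij}}s_i=-f_{ij}^*(\residue_{D_{ji}}s_j)$ with $\rho_{ij}(\Omega_i,\Omega_j)=0$—is precisely that intended unwinding, including the careful handling of the sign convention and the self-intersection case $f_{ii}\colon D'_{ii}\to D''_{ii}$.
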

We will use Proposition $\ref{prop:gl.sect_K_X}$ in Example $\ref{ex:Fujita}$.

\subsection{Semistable degenerations of $K3$ surfaces}\label{subsec:Friedman}
In this subsection, we give a summary of the classification of degenerations of $K3$ surfaces.
Let $\varpi :\mathcal{X}\to\Delta$ be a proper surjective holomorphic map from a compact complex $3$-dimensional manifold $\mathcal{X}$
to a domain $\Delta$ in $\C$ such that
\renewcommand{\labelenumi}{(\arabic{enumi})}
\begin{enumerate}
\item $\mathcal{X}$ is smooth outside the central fiber $X=\varpi^{-1}(0)$, and
\item for each $\zeta\in\Delta^{\! *}=\Delta\setminus\{0 \}$, the general fiber $X_\zeta =\varpi^{-1}(\zeta)$ is a smooth compact complex surface.
\end{enumerate}
We call $\varpi$ a {\emph{degeneration}} of compact complex surfaces.
Furthermore, a degeneration $\varpi$ is said to be \emph{semistable} if
\begin{enumerate}
\setcounter{enumi}{2}
\item the total space $\mathcal{X}$ is smooth, and
\item the central fiber $X$ is a normal crossing complex surface whose irreducible components are all K\"{a}hlerian.
\end{enumerate}
Let $\varpi :\mathcal{X}\to\Delta$ be a semistable degeneration of $K3$ surfaces, that is, the general fiber $X_\zeta$ is a $K3$ surface.
A degeneration $\varpi':\mathcal{X}'\to\Delta$ with the smooth total space $\mathcal{X}'$
is said to be a \emph{modification} of $\varpi :\mathcal{X}\to\Delta$
if there exists a birational map $\rho :\mathcal{X}\dasharrow\mathcal{X}'$ which is compatible with the projections $\varpi$ and $\varpi'$.
In particular, $\rho$ is an isomorphism over $\Delta^{\! *}$.
Mumford's semistable reduction theorem states that after a base change and a birational modification,
the central fiber of a degeneration is a reduced divisor with normal crossings.
Furthermore, if $\varpi:\mathcal{X}\to \Delta$ is a semistable degeneration of $K3$ surfaces,
then there is a modification $\varpi': \mathcal{X}'\to \Delta$ such that the total space $\mathcal{X}'$ has trivial canonical bundle,
according to the results of Kulikov \cite{Ku77} and Persson-Pinkham (see \cite{Huybrechts}, Chapter $6$, Theorem $5.1$).

The new family $\varpi':\mathcal{X}'\to \Delta$ is said to be a \emph{Kulikov degeneration} of the original degeneration $\varpi:\mathcal{X}\to \Delta$,
and the central fibers of Kulikov degenerations are classified into the following three cases, due to Kulikov \cite{Ku77} and Persson \cite{P77}.
\begin{theorem}[\cite{Ku77}, Theorem \rom{2}.
See also \cite{Huybrechts}, Chapter $6$, Theorem $5.2$]
\label{thm:d-s.sK3}
Let $\varpi: \mathcal{X}\to\Delta$ be a Kulikov degeneration, that is,
a semistable degeneration of $K3$ surfaces with $K_{\mathcal{X}}=\mathcal{O}_{\mathcal{X}}$ as above.
Then the central fiber $X=\varpi^{-1}(0)$ is one of the following three types:
\begin{enumerate}
\item[Type \rom{1}:] $X$ is a smooth $K3$ surface.
\item[Type \rom{2}:] $X=X_1\cup \dots \cup X_N$ is a chain of surfaces, where $X_1$ and $X_N$ are rational surfaces,
$X_2,\dots ,X_{N-1}$ are elliptic ruled surfaces, and $X_i\cap X_{i+1}$,~$i=1, \dots N-1$ are smooth elliptic curves.
\item[\hspace{5ex}Type \rom{3}:] $X=\smallcup_{i=1}^N X_i$, where each $X_i$ is a rational surface
and the double curves $D_{ij}=X_i\cap X_j \subseteq X_i$ are cycles of rational curves.
\end{enumerate}
\end{theorem}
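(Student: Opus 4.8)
The plan is to classify the central fiber $X=\varpi^{-1}(0)$ by combining adjunction on the smooth total space with the Hodge-theoretic constraints coming from the monodromy, and then to identify the analytic nature of each irreducible component via the classification of compact complex surfaces.

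First I would extract the basic numerical input. Since $\Delta$ is a disk, $\mathcal{O}_{\mathcal{X}}(X)\cong\varpi^*\mathcal{O}_\Delta([0])$ is trivial, so $\mathcal{O}_{\mathcal{X}}(X)|_{X_i}\cong\mathcal{O}_{X_i}$; writing $X=X_i+\sum_{j\neq i}X_j$ this gives $\mathcal{O}_{X_i}(X_i)\cong\mathcal{O}_{X_i}(-D_i)$ with $D_i=\sum_{j\in I_i}D_{ij}$. Because $K_{\mathcal{X}}=\mathcal{O}_{\mathcal{X}}$, adjunction then yields $K_{X_i}\cong\mathcal{O}_{X_i}(X_i)\cong\mathcal{O}_{X_i}(-D_i)$, i.e.\ $D_i$ is a reduced anticanonical divisor on every component. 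Moreover, smoothness of $\mathcal{X}$ along each double curve forces the triple-point formula $\deg N_{D_{ij}/X_i}+\deg N_{D_{ij}/X_j}=-\#(\text{triple points on }D_{ij})$, which is precisely the $d$-semistability relation of Definition \ref{definition:d-s.s}. These two facts are the engine of the whole argument.

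Next I would bring in the monodromy. After the semistable reduction already built into the hypothesis, the monodromy $T$ on $H^2(X_\zeta,\Z)=H^2(K3,\Z)$ is unipotent and $N=\log T$ is nilpotent; since $X_\zeta$ is a surface, Landman's bound forces $N^3=0$, so I define the type $\nu\in\{0,1,2\}$ by $N^{\nu}\neq0=N^{\nu+1}$. The core step is to identify $\nu$ with the dimension of the dual complex $\Gamma(X)$ — a point, a segment, or a triangulated $2$-sphere — by feeding the configuration into the Clemens--Schmid exact sequence and computing the limiting mixed Hodge structure. The $K3$ constraint $h^{2,0}(X_\zeta)=1$ pins down the top weight-graded piece and separates the three cases: $\nu=0$ means $N=0$ and $X$ is a single smooth $K3$ (Type \rom{1}); $\nu=1$ gives a configuration whose dual complex is an interval, hence a chain (Type \rom{2}); $\nu=2$ forces triple points and a dual complex that is a sphere (Type \rom{3}). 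This is where I would lean on \cite{Ku77} and \cite{Huybrechts}.

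Finally, with the combinatorial type fixed, I would read off each $X_i$ from $K_{X_i}\cong\mathcal{O}_{X_i}(-D_i)$ together with the triple-point formula and surface classification. At the two ends of a Type \rom{2} chain, $D_i$ is a single smooth elliptic curve realizing $-K_{X_i}$; combined with the irregularity and geometric-genus constraints imposed by the limiting Hodge structure this forces $X_i$ to be rational, while each interior component carries two disjoint smooth elliptic anticanonical curves and is therefore elliptic ruled, with every $D_{ij}$ a smooth elliptic curve. In the Type \rom{3} case each $D_i$ is an anticanonical cycle of rational curves, and the triple-point formula together with $p_g=0$ makes each $X_i$ rational, the double curves forming cycles of rational curves as in \cite{P77}. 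The main obstacle is the middle step: establishing the equivalence between the nilpotency index of the monodromy and the dimension of the dual complex, since that is where the global Hodge theory (Clemens--Schmid, the limiting mixed Hodge structure, and the constraint $h^{2,0}=1$) does the real work. Once that trichotomy is in hand, the surface-theoretic identification of the components is essentially bookkeeping.
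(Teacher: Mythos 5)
First, a structural point: the paper does not prove Theorem \ref{thm:d-s.sK3} at all. It is imported verbatim as a known result of Kulikov and Persson (\cite{Ku77}, Theorem \rom{2}; \cite{Huybrechts}, Chapter $6$, Theorem $5.2$) and is used only as a black box to define Types \rom{1}--\rom{3}, so there is no internal argument to compare yours against; it has to be judged against the proof in the cited literature. Measured that way, your skeleton is the standard one: adjunction on the smooth total space giving $K_{X_i}\cong\mathcal{O}_{X_i}(-D_i)$, the triple-point formula, unipotency of the monodromy with $N^3=0$, Clemens--Schmid and the limiting mixed Hodge structure, then the Enriques--Kodaira classification of the components. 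In outline this is exactly how the theorem is proved by Kulikov, Persson--Pinkham and in Huybrechts' book.

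The genuine gap is that the decisive middle step --- establishing that the nilpotency index of $N$ determines the dual complex and the precise component structure --- \emph{is} the content of the theorem, and at exactly that point you write that you would ``lean on \cite{Ku77} and \cite{Huybrechts}''; as a proof of this statement that is circular, and nothing in your sketch actually executes the Clemens--Schmid computation. Relatedly, the closing claim that the component identification is ``essentially bookkeeping'' is refuted by the paper's own Example \ref{ex:tori_Kodaira}: a cycle of elliptic ruled surfaces $Y_C(d)$ glued along smooth elliptic curves is $d$-semistable, has trivial canonical bundle, every $D_i$ is a reduced anticanonical divisor, and the triple-point formula holds, yet it is \emph{not} a Type \rom{2} fiber --- its smoothings are complex tori or primary Kodaira surfaces, not $K3$ surfaces. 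So adjunction, the triple-point formula and component-by-component surface classification can never produce the trichotomy by themselves; the global $K3$ hypotheses ($b_1(X_\zeta)=0$, $h^{2,0}(X_\zeta)=1$, K\"ahler components, cf.\ \cite{Ni88} for what goes wrong otherwise) must actually be pushed through Clemens--Schmid/Mayer--Vietoris to exclude cyclic or branched dual graphs and to force rational ends of the chain. Two smaller inaccuracies: the degree identity $\deg N_{D_{ij}/X_i}+\deg N_{D_{ij}/X_j}=-\#T_{ij}$ is strictly weaker than the $d$-semistability relation of Definition \ref{definition:d-s.s}, which is an isomorphism of line bundles (the two coincide only on rational double curves, whereas the Type \rom{2} curves are elliptic); and ``a surface with a smooth elliptic anticanonical curve is rational'' is true but not automatic --- it needs the case analysis of $|-K|$ on relatively minimal elliptic ruled surfaces, which your sketch silently assumes.
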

If one omits the assumption that all irreducible components of $X$ are K\"{a}hlerian in Theorem $\ref{thm:d-s.sK3}$,
other types of surfaces may arise as irreducible components of the central fiber of semistable degenerations \cite{Ni88}.
Meanwhile, it is known that $d$-semistability defined in $\eqref{def:d-s.s}$ is a necessary condition
for an SNC complex manifold $X$ to be the central fiber of a semistable degeneration.
Hence, it is natural to consider the converse problem:
\emph{For what $d$-semistable SNC complex manifolds (or surfaces) $X$ does there exist a family of (global) smoothings $\varpi :\mathcal{X}\to\Delta$ of $X$,
i.e., a semistable degeneration $\varpi :\mathcal{X}\to\Delta$ such that $X$ is the central fiber of $\varpi$?}
Friedman investigated this problem for $K3$ surfaces \cite{Fr83}.
In order to state his result more precisely, we need the following.
\begin{definition}\label{def:d-ssK3}
Let $X=\smallcup_i X_i$ be an SNC compact complex surface.
$X$ is said to be a \emph{$d$-semistable $K3$ surface if}
\renewcommand{\labelenumi}{(\roman{enumi})}
\begin{enumerate}
\item $X$ is $d$-semistable and each $X_i$ is K\"{a}hlerian;
\item $X$ has trivial canonical bundle; and
\item $X$ is of either Type \rom{1}, \rom{2}, or \rom{3} given in Theorem $\ref{thm:d-s.sK3}$.
\end{enumerate}
\end{definition}
For a $d$-semistable $K3$ surface $X$, we have $H^1(X,\mathcal{O}_X)=0$ by \cite{Fr83}, Lemma $5.7$.
Moreover, Friedman proved that any $d$-semistable $K3$ surface will be the central fiber of a semistable degeneration.
\begin{theorem}[\cite{Fr83}, Theorem $5.10$]\label{thm:Friedman}
Any $d$-semistable $K3$ surface is smoothable to $K3$ surfaces.
More precisely, there exists a semistable degeneration $\varpi :\mathcal{X}\to\Delta$
such that the fiber $X_\zeta=\varpi^{-1}(\zeta )$ over each $\zeta\in\Delta^{\! *}$ is a $K3$ surface.
\end{theorem}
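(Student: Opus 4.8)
The plan is to recast smoothability as a question in the deformation theory of the normal crossing surface $X$, and then to leverage the triviality of $K_X$ together with the vanishing $H^1(X,\mathcal{O}_X)=0$ to build an actual smoothing out of an infinitesimal one. First I would set up the tangent--obstruction formalism: writing $\Theta_X=\mathcal{H}om(\Omega^1_X,\mathcal{O}_X)$ for the tangent sheaf and $\mathcal{T}^1_X=\mathcal{E}xt^1(\Omega^1_X,\mathcal{O}_X)$ for the sheaf of first-order deformations of the singularities (a line bundle supported on $D$), the local-to-global $\mathrm{Ext}$ spectral sequence yields the exact sequence
\[
0\to H^1(X,\Theta_X)\to \mathbb{T}^1_X\to H^0(X,\mathcal{T}^1_X)\xrightarrow{\ \partial\ }H^2(X,\Theta_X),
\]
where $\mathbb{T}^1_X$ is the space of first-order deformations of $X$. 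The decisive geometric input is that a one-parameter deformation smooths the singular locus precisely when its Kodaira--Spencer image in $H^0(X,\mathcal{T}^1_X)$ is nowhere vanishing along $D$.

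The next step is to produce such a section. By Friedman's definition \eqref{def:d-s.s}, $d$-semistability is exactly the statement $\mathcal{T}^1_X\cong\mathcal{O}_D$, so the constant section $1\in H^0(D,\mathcal{O}_D)=H^0(X,\mathcal{T}^1_X)$ is nowhere vanishing and is the natural candidate smoothing direction. Its obstruction to lifting to a global first-order deformation is $\partial(1)\in H^2(X,\Theta_X)$, and I would show this class vanishes. On a smooth $K3$ one has $\Theta\cong\Omega^1$ (since $\wedge^2\Theta\cong K_X^{-1}$ is trivial) and hence $H^2(\Theta)=0$; for the singular central fiber the analogous vanishing follows from Serre duality on the K\"{a}hlerian components (each $X_i$ rational or elliptic ruled) together with the residue constraints imposed by $K_X\cong\mathcal{O}_X$, via a Mayer--Vietoris computation over the double curves. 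This yields a genuine first-order smoothing.

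The heart of the matter, and the step I expect to be the main obstacle, is \emph{unobstructedness}: promoting the first-order smoothing to a formal one-parameter family. Here I would invoke the $T^1$-lifting property. Because $H^1(X,\mathcal{O}_X)=0$ by \cite{Fr83}, Lemma $5.7$, a Bogomolov--Tian--Todorov type induction applies: one checks order by order that the obstruction in $\mathbb{T}^2_X$ to extending an order-$n$ deformation to order $n+1$ vanishes, the crucial cancellation coming from contraction against the trivializing section of $K_X$. On a non-normal central fiber the naive Tian--Todorov lemma must be replaced by its logarithmic analogue, and the bookkeeping of the Poincar\'{e}-residue matching that encodes $K_X\cong\mathcal{O}_X$ is where the real work concentrates; the K\"{a}hler hypothesis on the components guarantees the Hodge-theoretic degeneration needed to run the induction. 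The output is a formal smoothing over $\mathrm{Spf}\,\C[[\zeta]]$ with smooth generic fiber.

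Finally I would algebraize and conclude. Since the components are projective, Grothendieck existence effectivizes the formal family to a proper family over a small disk $\Delta$; arranging the construction so that the total space stays smooth and the central fiber remains reduced makes $\varpi:\mathcal{X}\to\Delta$ a semistable degeneration with $X_0=X$. The general fiber $X_\zeta$ has trivial canonical bundle, and since $h^1(\mathcal{O})$ is upper semicontinuous with $h^1(\mathcal{O}_X)=0$, the nearby fibers satisfy $h^1(\mathcal{O}_{X_\zeta})=0$; by the Enriques--Kodaira classification this forces each $X_\zeta$ to be a $K3$ surface, completing the proof.
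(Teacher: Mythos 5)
First, a point of comparison: the paper contains no proof of Theorem \ref{thm:Friedman} at all. It is quoted as background from \cite{Fr83}, Theorem $5.10$, in Section \ref{subsec:Friedman}, so your proposal can only be measured against Friedman's original argument, not against anything in this paper.

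Measured that way, your skeleton is the right one — the local-to-global $\mathrm{Ext}$ sequence, the identification of $d$-semistability in $\eqref{def:d-s.s}$ with $\mathcal{T}^1_X\cong\mathcal{O}_D$, the constant section $1$ as the smoothing direction, algebraization, and the identification of the general fiber by semicontinuity and classification — but the step you yourself call the heart of the matter is a genuine gap, not a deferred computation. You delegate unobstructedness to the $T^1$-lifting property and a Bogomolov--Tian--Todorov induction, justified by $H^1(X,\mathcal{O}_X)=0$. That is the Kawamata--Namikawa method \cite{KN94}, and, as this paper itself recalls in Section \ref{sec:Intro}, that method requires \emph{both} $H^{n-1}(X,\mathcal{O}_X)=0$ and $H^{n-2}(X_i,\mathcal{O}_{X_i})=0$ for every irreducible component; for $n=2$ the second condition reads $H^0(X_i,\mathcal{O}_{X_i})=0$, which never holds. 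So the machinery you invoke does not apply to surfaces as it stands — this is precisely why the surface case is Friedman's theorem rather than a special case of the later higher-dimensional technique. Friedman's actual proof runs no order-by-order induction: working in his $d$-semistable (logarithmic) deformation theory — not with $\Theta_X=\mathcal{H}om(\Omega^1_X,\mathcal{O}_X)$, which is a second soft spot in your sketch, since $\Omega^1_X$ of a normal crossing variety has torsion and the smoothing functor is only well behaved in the log framework — he shows the relevant obstruction space vanishes outright: by Serre duality with $\omega_X\cong\mathcal{O}_X$, the $H^2$ of the log tangent sheaf is dual to the $H^0$ of the sheaf of logarithmic one-forms, and an explicit computation using the Type \rom{2}/\rom{3} structure and the K\"{a}hlerness of the components shows that this $H^0$ is zero. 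Once the obstruction space is zero, both the lifting of $1\in H^0(\mathcal{O}_D)$ to a first-order deformation and its extension to an honest family over the disk are immediate, with no Tian--Todorov-type cancellation needed. To turn your proposal into a proof you would either have to carry out that cohomological vanishing (after which the $T^1$-lifting scaffolding is superfluous) or establish a surface version of log $T^1$-lifting whose hypotheses you can actually verify — and that is the entire content of the theorem, not a routine step.
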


When we posted a preprint of this article 
on arXiv, we did not know whether there exists an SNC complex surface $X$
which satisfies conditions (i), (ii) of Definition $\ref{def:d-ssK3}$ and $H^1(X,\mathcal{O}_X)=0$, but not condition (iii).
However, it was known that such surfaces actually exist and the following example was kindly mentioned to us by the referee.

Starting from a $d$-semistable $K3$ surface of Type \rom{2}, we consider the total space of a family of smoothings $\varpi: \mathcal X \to \Delta$. 
Assume the rational surface $X_1$ of the central fiber $X=\smallcup_{i=1}^N X_i$ contains a $(-1)$-curve $E$. We further assume that $E$ is a $(-1,-1)$-curve on $\mathcal X$, i.e.,
$E$ is an algebraic curve on $\mathcal X$ such that $E\cong \C P^1$ and $N_{C/\mathcal{X}}\cong \mathcal O_{\C P^1}(-1)\oplus \mathcal O_{\C P^1}(-1)$.
Then the elliptic curve $X_1\cap X_2$ intersects to $E$ at the point $P$, which is an ordinary double point in $X_2\subset \mathcal{X}$.
Taking the Atiyah flop $\varphi:\mathcal{X}'\dasharrow \mathcal X$ (which is called the {\emph{elementary modifications}} in \cite{FM83}), we consider the proper transform $\widetilde{X}_2$ of $X_2$ under $\varphi$.
Then the restriction of $\varphi$ on $\widetilde{X}_2$ gives the blow-up
\[
\restrict{\varphi}{\widetilde{X}_2}: \widetilde{X}_2=\mathrm{Bl}_P(X_2) \dasharrow X_2,
\]
which yields that $\widetilde{X}_2$ is no longer a ruled surface.
Thus, we conclude that the central fiber $X'$ of $\mathcal{X}'$ does not belong to any of the types in Theorem $\ref{thm:d-s.sK3}$.

\section{Explicit construction of differential geometric global smoothings}\label{sec:Smoothing}
In this section, we explicitly construct differential geometric global smoothings
of a given SNC complex surface $X$ satisfying conditions (i)--(iii) of Theorem $\ref{thm:smoothing}$.
For this purpose, we introduce in Section $\ref{subsec:coordinates}$ local holomorphic coordinates on $X$ suited to the smoothing problem.
Then we construct local smoothings around double curves $D_{ij}$ without a triple point, and triple points $T_{ijk}$
in Sections $\ref{subsec:smoothing_double}$ and $\ref{subsec:smoothing_triple}$, respectively.
In Section $\ref{subsec:existence}$, we construct global smoothings $\varpi :\mathcal{X}\to\Delta$ of $X$ by gluing together the above local smoothings,
and then use Theorem $\ref{thm:existence}$ to prove Theorem $\ref{thm:smoothing}$ which states that
each fiber $X_\zeta =\varpi^{-1}(\zeta )$ admits a complex structure with trivial canonical bundle depending continuously on $\zeta\in\Delta$.
Section $\ref{subsec:ex_SNC_double}$ provides several explicit examples of $d$-semistable SNC complex surfaces with trivial canonical bundle
including at most double curves, which we see are smoothable to complex tori, primary Kodaira surfaces, and $K3$ surfaces by Theorem $\ref{thm:smoothing}$.
In particular, we construct in Example $\ref{ex:K3_typeII}$ $d$-semistable $K3$ surfaces of Type \rom{2}
with any number $N\geqslant 2$ of irreducible components.

\subsection{Local coordinates on an SNC complex surface}\label{subsec:coordinates}
Let $X=\smallcup_{i=1}^N X_i$ (or possibly $X=\smallcup_{i=0}^{N-1} X_i$) be an SNC complex surface
satisfying conditions (i)--(iii) of Theorem \ref{thm:smoothing},
and $X_i=\smallcup_{\alpha\in\Lambda_i}U_{i,\alpha}$ be an open covering of $X_i$ such that $\Lambda_i$ a finite subset of $\N\cup\{ 0\}$.
We can find a local holomorphic coordinate system $\{ U_{i,\alpha},(z_{i,\alpha}^1,z_{i,\alpha}^2)\}$ on $X_i$, satisfying the following conditions:
\renewcommand{\labelenumi}{(\Alph{enumi})}
\begin{enumerate}
\item $U_{i,\alpha}$ is an open neighborhood of $(0,0)$ in $\C^2$;
\item if $U_{i,\alpha}\cap D_i\neq\emptyset$ and $U_{i,\alpha}\cap T_i =\emptyset$,
then $U_{i,\alpha}=\{(z_{i,\alpha}^1,z_{i,\alpha}^2)\in\C^2|z_{i,\alpha}^1\in U^1_{i,\alpha},\norm{z_{i,\alpha}^2}<1\}$
for some domain $U^1_{i,\alpha}$ in $\C$, and $U_{i,\alpha}\cap D_i=\{ z_{i,\alpha}^2=0\}$; and
\item if $U_{i,\alpha}\cap T_i\neq\emptyset$,
then $U_{i,\alpha}=\set{(z_{i,\alpha}^1,z_{i,\alpha}^2)\in\C^2|\norm{z_{i,\alpha}^1}<1,\norm{z_{i,\alpha}^2}<1}$,
and $U_{i,\alpha}\cap D_i=\{ z_{i,\alpha}^1 z_{i,\alpha}^2=0\}$, so that $U_{i,\alpha}\cap T_i=\{ z_{i,\alpha}^1=z_{i,\alpha}^2=0\}$.
\end{enumerate}
In particular, each $U_{i,\alpha}$ contains at most one triple point.
Now we set
\begin{align*}
\Lambda_i^{(0)}&=\set{\alpha\in\Lambda_i|U_{i,\alpha}\cap D_i=\emptyset},\\
\Lambda_i^{(1)}&=\set{\alpha\in\Lambda_i|U_{i,\alpha}\cap D_i\neq\emptyset\textrm{ and }U_{i,\alpha}\cap T_i=\emptyset},\\
\Lambda_i^{(2)}&=\set{\alpha\in\Lambda_i|U_{i,\alpha}\cap T_i\neq\emptyset},\\
\Lambda_{ij}&=\set{\alpha\in\Lambda_i|U_{i,\alpha}\cap D_{ij}\neq\emptyset},
\quad\Lambda_{ij}^{(1)}=\Lambda_{ij}\cap\Lambda_i^{(1)},\quad\Lambda_{ij}^{(2)}=\Lambda_{ij}\cap\Lambda_i^{(2)},\quad\text{and}\\
\Lambda_{ijk}&=\Lambda_{ij}\cap\Lambda_{ik}\subset\Lambda_i^{(2)}.
\end{align*}
Then we choose $\Lambda_i$ so that
\begin{enumerate}
\setcounter{enumi}{3}
\item if $U_{i,\alpha}\cap U_{j,\beta}=\emptyset$ for $i\neq j$, $\alpha\in\Lambda_i$, and $\beta\in\Lambda_j$, then $\alpha\neq\beta$; and
\item $\Lambda_{ij}=\Lambda_{ji}$ and if $\alpha\in\Lambda_{ij}$, then $U_{i,\alpha}\cap D_{ij}=U_{j,\alpha}\cap D_{ij}$ and
$\restrict{z_{i,\alpha}^1}{D_{ij}}=\restrict{z_{j,\alpha}^1}{D_{ij}}$.
\end{enumerate}
In particular, we see from condition (E) that $\Lambda_{i'j'k'}=\Lambda_{ijk}$ for any permutation $(i',j',k')$ of $(i,j,k)$.
By condition (ii) of Theorem \ref{thm:smoothing}, we can choose the above coordinate system so that
\begin{enumerate}
\setcounter{enumi}{5}
\item the meromorphic volume form $\Omega_i$ in (iii) of Theorem \ref{thm:smoothing} can be locally represented on $U_{i,\alpha}$ as
\begin{equation*}
\Omega_i=\begin{dcases}
\der z_{i,\alpha}^1\wedge\der z_{i,\alpha}^2&\text{if }\alpha\in\Lambda_i^{(0)},\\
\der z_{i,\alpha}^1\wedge\frac{\der z_{i,\alpha}^2}{z_{i,\alpha}^2}&\text{if }\alpha\in\Lambda_i^{(1)},\\
\sigma_{i,\alpha}\frac{\der z_{i,\alpha}^1}{z_{i,\alpha}^1}\wedge\frac{\der z_{i,\alpha}^2}{z_{i,\alpha}^2}&\text{if }\alpha\in\Lambda_i^{(2)},
\end{dcases}
\end{equation*}
where $\sigma_{i,\alpha}$ for $\alpha\in\Lambda_i^{(2)}$ takes either $1$ or $-1$.
\end{enumerate}

In terms of the above coordinate system, we define a new one
$\{ U_{i,\alpha}, (z_{ij,\alpha}, w_{ij,\alpha})\}_{\alpha\in\Lambda_{ij}}$ around $D_{ij}$ as follows:
\begin{enumerate}
\setcounter{enumi}{6}
\item if $\alpha\in\Lambda_{ij}^{(1)}$, then we set $z_{ij,\alpha}=z_{i,\alpha}^1,\; w_{ij,\alpha}=z_{i,\alpha}^2$; and
\item if $\alpha\in\Lambda_{ij}^{(2)}$, then between $z_{i,\alpha}^1$ and $z_{i,\alpha}^2$,
we choose as $w_{ij,\alpha}$ the coordinate which is a defining function of $D_{ij}$ on $U_{i,\alpha}$, and $z_{ij,\alpha}$ as the remainder,
so that $U_{i,\alpha}\cap D_{ij}=\{ w_{ij,\alpha}=0\}$ and $U_{i,\alpha}\cap D_{ik}=\{ z_{ij,\alpha}=0\}$ for some $k\in I_{ij}$.
\end{enumerate}
In particular, we have
\begin{equation}\label{eq:zw}
z_{ik,\alpha}=w_{ij,\alpha},\quad w_{ik,\alpha}=z_{ij,\alpha}\quad\text{for }\alpha\in\Lambda_{ijk}.
\end{equation}
Condition (E) is rephrased as
\begin{enumerate}
\item[($\text{E}'$)] Letting $U_{ij,\alpha}=U_{i,\alpha}\cap D_{ij}$ and $x_{ij,\alpha}=\restrict{z_{ij,\alpha}}{U_{ij,\alpha}}$ for $\alpha\in\Lambda_{ij}$,
we have $\Lambda_{ij}=\Lambda_{ji}$, $U_{ij,\alpha}=U_{ji,\alpha}$, and $x_{ij,\alpha}=x_{ji,\alpha}$ for all $i\neq j$ and $\alpha\in\Lambda_{ij}$.
\end{enumerate}
We can further choose the coordinate system so that the following condition holds.
\begin{enumerate}
\setcounter{enumi}{8}
\item The meromorphic volume form $\Omega_i$ in (F) is locally represented on $U_{i,\alpha}$ for $\alpha\in\Lambda_{ij}$ as
\begin{equation}\label{eq:Omega_i_Lambda_ij}
\Omega_i=\begin{dcases}
-\epsilon_{ij}\der z_{ij,\alpha}\wedge\frac{\der w_{ij,\alpha}}{w_{ij,\alpha}}&\text{for }\alpha\in\Lambda_{ij}^{(1)},\\
-\sigma_{ijk}\frac{\der z_{ij,\alpha}}{z_{ij,\alpha}}\wedge\frac{\der w_{ij,\alpha}}{w_{ij,\alpha}}
&\text{for }\alpha\in\Lambda_{ijk}\subset\Lambda_{ij}^{(2)},
\end{dcases}
\end{equation}
where $\epsilon_{ij}=(j-i)/\norm{j-i}$, and $\sigma_{ijk}\in\{ 1,-1\}$ does not depend on $\alpha\in\Lambda_{ijk}$ and satisfies
\begin{equation}\label{eq:relation_sigma}
\sigma_{i'j'k'}=\sgn\begin{pmatrix}i&j&k\\ i'&j'&k'\end{pmatrix}\cdot\sigma_{ijk}.
\end{equation}
Also, the Poincar\'{e} residue $\residue_{D_{ij}}\Omega_i$ gives a meromorphic volume form $\psi_{D_{ij}}$ on $D_{ij}$,
which is locally represented on $U_{ij,\alpha}$ for $\alpha\in\Lambda_{ij}$ as
\begin{equation}\label{eq:psi_D_ij}
\psi_{D_{ij}}=\begin{dcases}
\epsilon_{ij}\der x_{ij,\alpha}&\text{for }\alpha\in\Lambda_{ij}^{(1)},\\
\sigma_{ijk}\frac{\der x_{ij,\alpha}}{x_{ij,\alpha}}&\text{for }\alpha\in\Lambda_{ijk}\subset\Lambda_{ij}^{(2)}.
\end{dcases}
\end{equation}
\end{enumerate}
Indeed, condition (iii) of Theorem $\ref{thm:smoothing}$ and equation $\eqref{eq:zw}$, respectively, give that
\begin{equation*}\label{eq:sigma_ikj}
\sigma_{ikj}=-\sigma_{ijk}\quad\text{and}\quad\sigma_{ikj}=-\sigma_{ijk},
\end{equation*}
from which $\eqref{eq:relation_sigma}$ follows.
Let $U'_{i,\alpha}=U_{i,\alpha}\setminus D_i$ and $U'_{ij,\alpha}=U_{ij,\alpha}\setminus T_{ij}$.
Then $\eqref{eq:Omega_i_Lambda_ij}$ (resp. $\eqref{eq:psi_D_ij}$) is a local representation of
the \emph{holomorphic} volume form $\Omega_i$ on $U'_{i,\alpha}$ in $X_i\setminus D_i$
(resp. $\psi_{D_{ij}}$ on $U'_{ij,\alpha}$ in $D_{ij}\setminus T_{ij}$).
Also, we can define a smooth Hermitian form $\omega_{D_{ij}}$ on $D_{ij}\setminus T_{ij}$ by
\begin{equation}\label{eq:omega_D_ij}
\omega_{D_{ij}}=\frac{\I}{2}\psi_{D_{ij}}\wedge\overline{\psi}_{D_{ij}},
\end{equation}
which is locally represented on $U'_{ij,\alpha}$ for $\alpha\in\Lambda_{ij}$ as
\begin{equation*}
\omega_{D_{ij}}=\begin{dcases}
\frac{\I}{2}\der x_{ij,\alpha}\wedge\der\overline{x}_{ij,\alpha}&\text{for }\alpha\in\Lambda_{ij}^{(1)},\\
\frac{\I}{2}\frac{\der x_{ij,\alpha}\wedge\der\overline{x}_{ij,\alpha}}{\norm{x_{ij,\alpha}}^2}&\text{for }\alpha\in\Lambda_{ij}^{(2)}.
\end{dcases}
\end{equation*}

\begin{example}\label{ex:tetra_in_CP3}
Here we suppose indices $i,j,k$, and $\ell$ will take $0,1,2$, or $3$.
Let us consider $X=\smallcup_{i=0}^3X_i$ in $\C P^3$, where each $X_i$ is a complex hyperplane
\begin{equation*}
X_i=\set{[\bm{z}]=[z^0,z^1,z^2,z^3]\in\C P^3|z^i=0}\cong\C P^2.
\end{equation*}
We will see that $X$ is an SNC complex surface satisfying conditions (ii) and (iii) of Theorem $\ref{thm:smoothing}$, but not condition (i).
We will also calculate all $\sigma_{ijk}\in\{ 1,-1\}$ which appear in the local representation $\eqref{eq:Omega_i_Lambda_ij}$
of the meromorphic volume form $\Omega_i$ on $X_i$.\\
\indent (1)~{\it $X$ is an SNC complex surface.}\\
Letting $U_i=\set{[\bm{z}]\in\C P^3|z^i\neq 0}$, we have an open covering $\{ U_i\}$ of $\C P^3$.
Let us define $\zeta_i^j=z^j/z^i$ on $U_i$.
Then $\bm{\zeta}_i=(\zeta_i^0,\dots ,\zeta_i^3)$ with $\zeta_i^i=1$ defines local coordinates on $U_i$.
Also, defining meromorphic $1$-forms $\eta_i^j=\der\zeta_i^j/\zeta_i^j$ on $U_i$, we have
\begin{equation}\label{eq:eta}
\eta_j^k=\eta_i^k-\eta_i^j\quad\text{on }U_i\cap U_j
\end{equation}
by the coordinate transformation $\bm{\zeta}_j=(\zeta_i^j)^{-1}\bm{\zeta}_i$ on $U_i\cap U_j$.
If $i,j,k$, and $\ell$ are mutually distinct, then in terms of the local coordinates $\bm{\zeta}_\ell\in\C^3\times\{ 1\}$ on $U_\ell$,
$X\cap U_\ell=X_i\cup X_j\cup X_k$ is represented as $\set{\bm{\zeta}_\ell\in\C^3\times\{ 1\}|\zeta_\ell^i\zeta_\ell^j\zeta_\ell^k =0}$.
Hence, $X$ is an SNC complex surface.\\
\indent (2)~{\it $X$ satisfies condition {\rm (ii)} of Theorem $\ref{thm:smoothing}$.}\\
Letting $U_{i,j}=\set{[\bm{z}]\in X_i|z^j\neq 0}=X_i\cap U_j$, we have an open covering $\{ U_{i,j}\}_{j\neq i}$ of $X_i$
and local coordinates $\bm{\zeta}_{i,j}=(\zeta_j^0,\dots ,\widehat{\zeta_j^i},\dots ,\zeta_j^3)$ with $\zeta_j^j=1$ on $U_{i,j}$ for $j\neq i$ in $X_i$,
where the hatted component is meant to be omitted.
Note that in this example, we are using the notation $U_{i,j}$ above and $U_{ij,k}$ below in place of $U_{i,\alpha}$ and $U_{ij,\alpha}$, respectively.
Then we see that $\zeta_k^j$ for $j,k\neq i$ is a local defining function of $D_{ij}$ on $U_{i,k}$.
Thus, $D_i$ is an anticanonical divisor on $X_i$ because we have
\begin{equation*}
K_{X_i}=K_{\C P^2}\cong\mathcal{O}_{\C P^2}(-3)\quad\text{and}\quad[D_i]=[D_{ij}]\otimes [D_{ik}]\otimes [D_{i\ell}]\cong\mathcal{O}_{\C P^2}(3),
\end{equation*}
where $j,k,\ell\neq i$ are all distinct.\\
\indent (3)~{\it Meromorphic volume forms $\Omega_i$ on $X_i$, which give all $\sigma_{ijk}$,
satisfy condition {\rm (iii)} of Theorem $\ref{thm:smoothing}$.}\\
Using $\eqref{eq:eta}$, we obtain well-defined meromorphic volume forms $\Omega_i$ on $X_i$ locally represented as
\begin{equation*}
\Omega_i=-\epsilon_{ijk\ell}\,\eta_j^k\wedge\eta_j^\ell\quad\text{on }U_{i,j}\text{ for }k,\ell\in\{ 0,1,2,3\}\setminus\{ i,j\}\text{ with }k\neq\ell ,
\end{equation*}
where $\epsilon_{ijk\ell}=\epsilon_{ij}\epsilon_{ik}\epsilon_{i\ell}\epsilon_{jk}\epsilon_{j\ell}\epsilon_{k\ell}$ is the Levi-Civita symbol.
Indeed, $\Omega_i$ is induced from a meromorphic volume form $\Omega_{\C P^3}$ on $\C P^3$ defined by
\begin{equation*}
\Omega_{\C P^3}=(-1)^j\eta_j^0\wedge\dots\wedge\widehat{\eta_j^j}\wedge\dots\wedge\eta_j^3\quad\text{on }U_j,
\end{equation*}
by the Poincar\'{e} residue map as $\Omega_i=\residue_{X_i}\Omega_{\C P^3}$.
Thus, we have $\sigma_{ijk}=\epsilon_{ijk\ell}$, where $\ell\in\{ 0,1,2,3\}\setminus\{ i,j,k\}$.
Letting $U_{ij,k}=\set{[\bm{z}]\in D_{ij}|z^k\neq 0}=X_i\cap X_j\cap U_k$, we have an open covering $\{ U_{ij,k}\}_{k\neq i,j}$ of $D_{ij}$
and local coordinates $\bm{\zeta}_{ij,k}=(\zeta_k^0,\dots ,\widehat{\zeta_k^i},\dots ,\widehat{\zeta_k^j},\dots ,\zeta_k^3)$ with $\zeta_k^k=1$
on $U_{ij,k}$ for $k\neq i,j$ in $D_{ij}$.
Then the Poincar\'{e} residues $\residue_{D_{ij}}\Omega_i$ and $\residue_{D_{ij}}\Omega_j$ are locally represented on $U_{ij,k}$ as
\begin{align*}
\residue_{D_{ij}}\Omega_i&=-\residue_{\zeta_k^j=0}\epsilon_{ikj\ell}\,\eta_k^j\wedge\eta_k^\ell
=-\epsilon_{ikj\ell}\,\eta_k^\ell =\epsilon_{ijk\ell}\,\eta_k^\ell,\\
\residue_{D_{ij}}\Omega_j&=-\residue_{\zeta_k^i=0}\epsilon_{jki\ell}\,\eta_k^i\wedge\eta_k^\ell
=-\epsilon_{jki\ell}\,\eta_k^\ell =-\epsilon_{ijk\ell}\,\eta_k^\ell ,
\end{align*}
so that we have $\residue_{D_{ij}}\Omega_i=-\residue_{D_{ij}}\Omega_j$.
Hence, condition (iii) of Theorem $\ref{thm:smoothing}$ holds.\\
\indent (4)~{\it $X$ does not satisfy condition {\rm (i)} of Theorem $\ref{thm:smoothing}$.}\\
Note that $\zeta_\ell^k$ is a local defining function of $T_{ijk}=X_i\cap X_j\cap X_k$ on $U_{ij,\ell}$ in $D_{ij}$.
Thus, for $N_{ij}=N_{D_{ij}/X_i}$ and $T_{ij}=\sum_{k\in I_{ij}}T_{ijk}$, it follows from the adjunction formula that
\begin{equation*}
N_{ij}\cong\restrict{[D_{ij}]}{D_{ij}}=\restrict{\mathcal{O}_{\C P^2}(1)}{\C P^1}=\mathcal{O}_{\C P^1}(1)\cong N_{ji},\quad [T_{ij}]
=[T_{ijk}]\otimes [T_{ij\ell}]\cong\mathcal{O}_{\C P^1}(2),
\end{equation*}
where $k,\ell\in\{ 0,1,2,3\}\setminus\{ i,j\}$ with $k\neq\ell$, so that $X$ is \emph{not} $d$-semistable and does not satisfy condition (i).

Although the above $X$ is not $d$-semistable, this example will provide
a local model of the neighborhood of a triple point in Section $\ref{subsec:smoothing_triple}$.
Also, we will see in Section $\ref{sec:TypeIIIK3}$ that if we blow up $X$ at appropriate points,
then the resulting SNC complex surface becomes a $d$-semistable $K$3 surface of Type \rom{3}
satisfying conditions (i)--(iii) of Theorem $\ref{thm:smoothing}$, so that we can apply Theorem $\ref{thm:smoothing}$.
\end{example}

Let $(x_{ij,\alpha},y_{ij,\alpha})$ be local holomorphic coordinates of $\pi_{ij}^{-1}(U_{ij,\alpha})\subset N_{ij}$,
where $\pi_{ij}$ is the projection from $N_{ij}$ to $D_{ij}$ and $y_{ij,\alpha}$ are fiber coordinates.
Also, let $D'_{ij}=D_{ij}\setminus T_{ij}$ and $N'_{ij}=\restrict{N_{ij}}{D'_{ij}}=N_{ij}\setminus\pi_{ij}^{-1}(T_{ij})$.
Then from condition (i) of Theorem \ref{thm:smoothing}, we may further assume that
\begin{enumerate}
\setcounter{enumi}{9}
\item the map $h_{ij,\zeta}:N'_{ij}\setminus D'_{ij}\to N'_{ji}\setminus D'_{ji}$ locally defined by
\begin{equation}\label{eq:h_ij}
h_{ij,\zeta}:(x_{ij,\alpha},y_{ij,\alpha})\mapsto (x_{ji,\alpha},y_{ji,\alpha})=\begin{dcases}
(x_{ij,\alpha},\zeta /y_{ij,\alpha})&\textrm{for }\alpha\in\Lambda_{ij}^{(1)},\\
(x_{ij,\alpha},\zeta /(x_{ij,\alpha}y_{ij,\alpha}))&\textrm{for }\alpha\in\Lambda_{ij}^{(2)},
\end{dcases}
\end{equation}
is a well-defined isomorphism for $\zeta\in\C^*$.
\end{enumerate}
Thus we see for all $i,j$ with $i\neq j$  that
\begin{equation}\label{eq:composition_h}
h_{ji,\xi}\circ h_{ij,\zeta}(x_{ij,\alpha},y_{ij,\alpha})=(x_{ij,\alpha},\zeta^{-1}\xi y_{ij,\alpha}).
\end{equation}

Now consider a Hermitian metric around $D_{ij}$ (which is temporary and different from what is considered later)
such that the associated $2$-form coincides with that of a flat metric
\begin{equation*}
\frac{\I}{2}\left(\der z_{ij,\alpha}\wedge\der\overline{z}_{ij,\alpha}+\der w_{ij,\alpha}\wedge\der\overline{w}_{ij,\alpha}\right)
\end{equation*}
on $U_{i,\alpha}$ for $\alpha\in\Lambda_{ij}^{(2)}$.
Then by the tubular neighborhood theorem, we have a diffeomorphism $\Phi_{ij}$
from a neighborhood $V_{ij}$ of the zero section of $N_{ij}$ to a neighborhood $W_{ij}$ of $D_{ij}$ in $X_i$
such that $W_{ij}\supset\smallcup_{\alpha\in\Lambda_{ij}}U_{i,\alpha}$ and $\Phi_{ij}$
is locally represented on $U_{i,\alpha}$ for $\alpha\in\Lambda_{ij}$ as
\begin{equation}\label{eq:zw_approx}
\begin{aligned}
z_{ij,\alpha}=x_{ij,\alpha}+O(\norm{y_{ij,\alpha}}^2),\quad&w_{ij,\alpha}=y_{ij,\alpha}+O(\norm{y_{ij,\alpha}}^2)
\quad\textrm{for }\alpha\in\Lambda_{ij}^{(1)},\textrm{ and}\\
z_{ij,\alpha}=x_{ij,\alpha},\quad&w_{ij,\alpha}=y_{ij,\alpha}\quad\textrm{for }\alpha\in\Lambda_{ij}^{(2)}.
\end{aligned}
\end{equation}
Thus, it follows from $\eqref{eq:zw}$ and $\eqref{eq:zw_approx}$ that
\begin{equation}\label{eq:relation_xy}
\begin{aligned}
x_{ij,\beta}&=x_{ji,\beta}=y_{ik,\beta}=y_{jk,\beta},\\
x_{ik,\beta}&=x_{ki,\beta}=y_{ij,\beta}=y_{kj,\beta},\\
x_{jk,\beta}&=x_{kj,\beta}=y_{ji,\beta}=y_{ki,\beta}\quad\textrm{for }\beta\in\Lambda_{ijk}.
\end{aligned}
\end{equation}

In the rest of this subsection, we assume $i<j$ unless otherwise mentioned.
Let $\Norm{\cdot}_{ij}$ be a bundle norm on $N'_{ij}$ such that 
\begin{equation}\label{eq:norm_ij_triple}
\Norm{(x_{ij,\beta},y_{ij,\beta})}_{ij}^2=\norm{x_{ij,\beta}}\cdot\norm{y_{ij,\beta}}^2\quad\text{for }\beta\in\Lambda_{ij}^{(2)},
\end{equation}
which makes sense because $N'_{ij}$ does not include the fibers over the triple points $T_{ij}$.
Then we define a cylindrical parameter $t_{ij}$ on $N'_{ij}\setminus D'_{ij}$ by
\begin{equation}\label{eq:def_t_ij}
t_{ij}(\bm{v})=-\log\Norm{\bm{v}}_{ij}^2\quad\text{for }\bm{v}\in N'_{ij}\setminus D'_{ij}.
\end{equation}
In particular, using $\eqref{eq:norm_ij_triple}$ we have
\begin{equation}\label{eq:t_ij_triple}
t_{ij}(x_{ij,\beta},y_{ij,\beta})=-\log\norm{x_{ij,\beta}}-\log\norm{y_{ij,\beta}}^2\quad\text{for }\beta\in\Lambda_{ij}^{(2)}.
\end{equation}
Note that $t_{ij}$ satisfies
\begin{equation}\label{eq:t_ij_zeta}
t_{ij}(\zeta\bm{v})=t_{ij}(\bm{v})-\log\norm{\zeta}^2\quad\text{for }\zeta\in\C^*.
\end{equation}
For later convenience, we further extend $t_{ij}$ so as to take $\infty$ on $D_{ij}$, so that $t_{ij}^{-1}(\infty )=D_{ij}$.
Via $\Phi_{ij}$, we can also define a function $\widetilde{t}_{ij}=t_{ij}\circ\Phi_{ij}^{-1}$
on $W_{ij}\setminus\smallcup_{k\in I_{ij}}(D_{ik}\setminus T_{ijk})\subset X_i$, which takes $\infty$ on $D_{ij}$.
We will denote $t_{ij}^{-1}(I)$ (resp. $\widetilde{t}_{ij}^{-1}(I)$) for an interval $I$ in $(0,\infty ]$
simply by $t_{ij}^{-1}I$ (resp. $\widetilde{t}_{ij}^{-1}I$).
Letting $D_{ij}^{(1)}=\smallcup_{\alpha\in\Lambda_{ij}^{(1)}}U_{ij,\alpha}\subset D'_{ij}$, we may assume that
\begin{equation*}
t_{ij}^{-1}(0,\infty ]\cap\pi_{ij}^{-1}(D_{ij}^{(1)})\subset V_{ij}\quad\text{and}\quad
\Phi_{ij}(t_{ij}^{-1}(0,\infty ]\cap\pi_{ij}^{-1}(D_{ij}^{(1)}))\subset\smallcup_{\alpha\in\Lambda_{ij}^{(1)}}U_{i,\alpha}.
\end{equation*}
Now setting
\begin{equation}\label{eq:component_V_ij}
\begin{aligned}
V_{ij}^{(1)}&=t_{ij}^{-1}(0,\infty ]\cap\pi_{ij}^{-1}(D_{ij}^{(1)}),\quad\text{and for }\beta\in\Lambda_{ij}^{(2)}\\
V_{ij,\beta}&=\Phi_{ij}^{-1}(U_{i,\beta})=\set{(x_{ij,\beta},y_{ij,\beta})\in\C^2|\norm{x_{ij,\beta}}<1,\norm{y_{ij,\beta}}<1},\\
V'_{ij,\beta}&=V_{ij,\beta}\cap N'_{ij}=\set{(x_{ij,\beta},y_{ij,\beta})\in\C^2|0<\norm{x_{ij,\beta}}<1,\norm{y_{ij,\beta}}<1},
\end{aligned}
\end{equation}
we redefine the neighborhood $V_{ij}$ of $D_{ij}$ in $N_{ij}$ by
\begin{equation}\label{eq:V_ij}
\begin{aligned}
V_{ij}&=V_{ij}^{(1)}\cup\smallcup_{\beta\in\Lambda_{ij}^{(2)}}V_{ij,\beta},\quad\text{so that}\\
V'_{ij}&=V_{ij}^{(1)}\cup\smallcup_{\beta\in\Lambda_{ij}^{(2)}}V'_{ij,\beta},\quad\text{where}\quad V'_{ij}=V_{ij}\cap N'_{ij}.
\end{aligned}
\end{equation}
Accordingly, redefining the neighborhood $W_{ij}=\Phi_{ij}(V_{ij})$ of $D_{ij}$ in $X_i$
and setting $W_{ij}^{(1)}=\Phi_{ij}(V_{ij}^{(1)})$, we have
\begin{equation}\label{eq:W_ij}
W_{ij}=W_{ij}^{(1)}\cup\smallcup_{\beta\in\Lambda_{ij}^{(2)}}U_{i,\beta}.
\end{equation}
Thus, $t_{ij}$ takes values in $(0,\infty ]$ on $V'_{ij}$,
and accordingly $\widetilde{t}_{ij}$ takes values in $(0,\infty ]$ on
$\Phi_{ij}(V'_{ij})=W_{ij}\setminus\smallcup_{k\in I_{ij}}D_{ik}$.
In particular, if $\Lambda_{ij}^{(2)}=\emptyset$, i.e., if $D_{ij}$ has no triple point,
then we have $D'_{ij}=D_{ij}$ and $V_{ij}=V'_{ij}=t_{ij}^{-1}(0,\infty ]$.
Meanwhile, we define a cylindrical parameter $t_{ji}$ on $N'_{ji}\setminus D'_{ji}$ by
\begin{equation}\label{eq:t_ji}
t_{ji}=-t_{ij}\circ h_{ji,1}.
\end{equation}
Then we see from $\eqref{eq:h_ij}$ that
$\eqref{eq:t_ij_triple}$ and $\eqref{eq:t_ij_zeta}$ still hold if we interchange $i$ and $j$.
As with $t_{ij}$, we will regard $t_{ji}$ as a function on $N'_{ji}\cup T_{ji}$, taking $\infty$ on $D_{ji}$.
Now let us define $T_\zeta\in (-\infty ,\infty ]$ for $\zeta\in\C$ by
\begin{equation}\label{eq:T_zeta}
T_\zeta =\begin{dcases}
-\log\norm{\zeta}&\text{if }\zeta\neq 0,\\
\infty&\text{if }\zeta =0.\end{dcases}
\end{equation}
Then by $\eqref{eq:composition_h}$, $\eqref{eq:t_ij_zeta}$ and $\eqref{eq:t_ji}$, we have for all $i,j$ with $i\neq j$ and $\zeta\in\C^*$ that
\begin{equation}\label{eq:pullback_t_ji}
h_{ij,\zeta}^*t_{ji}=-t_{ij}\circ h_{ji,1}\circ h_{ij,\zeta}=-t_{ij}-\log\norm{\zeta}^2=2T_\zeta -t_{ij}.
\end{equation}
In the same way as above, we can define a neighborhood $V_{ji}$ (resp. $V'_{ji}$) of $D_{ij}$ (resp. $D'_{ij}$) in $N_{ji}$ (resp. $N'_{ji}$),
a neighborhood $W_{ji}$ of $D_{ij}$ in $X_j$,
and a cylindrical parameter $\widetilde{t}_{ji}$ on $W_{ji}\setminus\smallcup_{k\in I_{ij}}(D_{jk}\setminus T_{ijk})$ which takes $\infty$ on $D_{ij}$.

For later use, we define $W_{ij}^T\subset W_{ij}$ and $X_i^T\subset X_i$ for $T\in (0,\infty ]$ by
\begin{equation}\label{eq:W-X^T}
\begin{aligned}
W_{ij}^T&=\begin{dcases}
\widetilde{t}_{ij}^{-1}(0,T)\setminus\smallcup_{k\in I_{ij}}\widetilde{t}_{ik}^{-1}[T,\infty ]&\text{for }T\in (0,\infty ),\\
W_{ij}&\text{for }T=\infty ,
\end{dcases}\quad\text{and}\\
X_i^T&=\begin{dcases}
X_i\setminus\smallcup_{j\in I_i}\widetilde{t}_{ij}^{-1}[T,\infty ]&\text{for }T\in (0,\infty ),\\
X_i&\text{for }T=\infty .
\end{dcases}
\end{aligned}
\end{equation}
Then we have $W_{ij}^T\subset X_i^T\subset X_i\setminus D_i$ for $T\in (0,\infty )$.

\subsection{Local smoothings of $X_i\cup X_j$ around $D_{ij}$ without a triple point}\label{subsec:smoothing_double}
Here we suppose $D_{12}\neq\emptyset$ is a double curve without a triple point, so that $T_{12}=\emptyset$.
Also, indices $i,j$ will take $1$ or $2$, and the pair $(i,j)$ will take $(1,2)$ or $(2,1)$.
For general $D_{n_1 n_2}=X_{n_1}\cap X_{n_2}$ with $n_1<n_2$, we replace subscripts $1,2$ and $i,j$ with $n_1,n_2$ and $n_i,n_j$, respectively.
In Section $\ref{subsec:coordinates}$, we have chosen the coordinate system $\{U_{i,\alpha},(z_{ij,\alpha},w_{ij,\alpha})\}$
with $\alpha\in\Lambda_{ij}=\Lambda_{ij}^{(1)}$ around $D_{ij}\subset X_i$,
where $w_{ij,\alpha}$ is a defining function of $D_{ij}$ on $U_{i,\alpha}$ and the holomorphic volume form $\Omega_i$ is locally represented as
$\epsilon_{ij}\der z_{ij,\alpha}\wedge\der w_{ij,\alpha}/w_{ij,\alpha}$ on $U'_{i,\alpha}$ for $\alpha\in\Lambda_{ij}$.
Also, we have a holomorphic volume form $\psi_{D_{ij}}=\residue_{D_{ij}}\Omega_i$ given by $\eqref{eq:psi_D_ij}$
and a Hermitian form $\omega_{D_{ij}}$ given by $\eqref{eq:omega_D_ij}$ on $D_{ij}$.

We define a smooth complex $2$-form $\Omega_{ij}^\infty$ and a real $2$-form $\omega_{ij}^\infty$ on $N_{ij}\setminus D_{ij}$ by
\begin{equation}\label{eq:Oomega_ij_double}
\begin{aligned}
\Omega_{ij}^\infty&=\pi_{ij}^*\psi_{D_{ij}}\wedge\del t_{ij},\\
\omega_{ij}^\infty&=\pi_{ij}^*\omega_{D_{ij}}+\frac{\I}{2}\del t_{ij}\wedge\delbar t_{ij}.
\end{aligned}
\end{equation}
Then $\Omega_{ij}^\infty$ and $\omega_{ij}^\infty$ are locally represented on $\pi_{ij}^{-1}(U_{ij,\alpha})$ for $\alpha\in\Lambda_{ij}$ as
\begin{align*}
\Omega_{ij}^\infty&=-\epsilon_{ij}\der x_{ij,\alpha}\wedge\frac{\der y_{ij,\alpha}}{y_{ij,\alpha}},\\
\omega_{ij}^\infty&=\frac{\I}{2}\left(\der x_{ij,\alpha}\wedge\der\overline{x}_{ij,\alpha}
+\frac{\der y_{ij,\alpha}\wedge\der\overline{y}_{ij,\alpha}}{\norm{y_{ij,\alpha}}^2}\right) .
\end{align*}
\begin{lemma}\label{lem:SU(2)_double}
The $2$-form $\Omega_{ij}^\infty$ defined in $\eqref{eq:Oomega_ij_double}$ is holomorphic on $N_{ij}\setminus D_{ij}$.
Also, $(\Omega_{ij}^\infty ,\omega_{ij}^\infty )$ defines an $\SU (2)$-structure on $N_{ij}\setminus D_{ij}$ such that the associated metric $g_{ij}^\infty$ is cylindrical.
\end{lemma}
\begin{proof}
Since the bundle norm $\Norm{\cdot}_{ij}$ on $N_{ij}$ is locally written as
\begin{equation*}
\Norm{(x_{ij,\alpha},y_{ij,\alpha})}_{ij}=e^{\phi_{ij,\alpha}(x_{ij,\alpha})}\norm{y_{ij,\alpha}}^2
\quad\text{for }(x_{ij,\alpha},y_{ij,\alpha})\in\pi_{ij}^{-1}(V_{ij,\alpha})
\end{equation*}
for some smooth function $\phi_{ij,\alpha}$ on $V_{ij,\alpha}$, it follows from $\eqref{eq:def_t_ij}$ that
\begin{equation*}
\Omega_{ij}^\infty =-\pi_{ij}^*\psi_{D_{ij}}\wedge\frac{\der y_{ij,\alpha}}{y_{ij,\alpha}}\quad\text{on }\pi_{ij}^{-1}(V_{ij,\alpha}),
\end{equation*}
which shows that $\Omega_{ij}^\infty$ is holomorphic on $N_{ij}\setminus D_{ij}$.
Also, we see easily from $\eqref{eq:Oomega_ij_double}$ that $2\omega_{ij}^2=\Omega_{ij}\wedge\overline{\Omega}_{ij}$.
Thus, $(\Omega_{ij}^\infty ,\omega_{ij}^\infty )$ defines an $\SU (2)$-structure on $N_{ij}\setminus D_{ij}$
due to Definitions $\ref{def:SU(2)_V}$ and $\ref{def:SU(2)_M}$.
For the proof that the associated metric is cylindrical, see \cite{Doi09}, Lemma $3.2$.
\end{proof}
Note in particular that $\Omega_{ij}^\infty$ is a $\der$-closed $\SL (2,\C )$-structure on $N_{ij}\setminus D_{ij}$.
If we regard $\Omega_{ij}^\infty$ and $\omega_{ij}^\infty$ as defined on $W_{ij}\setminus D_{ij}$ via $\Phi_{ij}$,
then it follows from $\eqref{eq:zw_approx}$ that
\begin{equation}\label{eq:asymptotic_SU(2)_double}
\norm{\Omega_i-\Omega_{ij}^\infty}=O(e^{-t_{ij}/2}),\quad\norm{\omega_{ij}-\omega_{ij}^\infty}=O(e^{-t_{ij}/2}),
\end{equation}
where $\omega_{ij}$ is the $(1,1)$-part of $\omega_{ij}^\infty$, normalized so that $\Omega_i\wedge\overline{\Omega}_i=2\omega_{ij}\wedge\omega_{ij}$,
and the norm is measured by the cylindrical metric $g_{ij}^\infty$ associated with $(\Omega_{ij}^\infty ,\omega_{ij}^\infty )$.
Letting $c>1$ and shrinking $U_{i,\alpha}$ for each $\alpha\in\Lambda_{ij}^{(1)}$ by the coordinate transformation $w_{ij,\alpha}\mapsto cw_{ij,\alpha}$
in conditions (B) and (G) if necessary, we may assume that $(\Omega_i,\omega_{ij})$ is an $\SU (2)$-structure on $W_{ij}\setminus D_{ij}$.
Thus we have the associated Hermitian metric $g_{ij}$ on $W_{ij}\setminus D_{ij}$.
We also see from $\eqref{eq:pullback_t_ji}$ and $\eqref{eq:Oomega_ij_double}$ that
\begin{equation}\label{eq:pullback_Oomega_ij_double}
h_{ij,\zeta}^*\Omega_{ji}^\infty =\Omega_{ij}^\infty,\quad h_{ij,\zeta}^*\omega_{ji}^\infty =\omega_{ij}^\infty.
\end{equation}

Now we construct local smoothings of $X_1\cup X_2$ around $D_{12}$.
Fix $\epsilon\leqslant e^{-1}$ and let $\Delta =\Delta_\epsilon =\set{\zeta\in\C|\norm{\zeta}<\epsilon}$ be a domain in $\C$.
Then a family of local smoothing $\varpi_{12}:\mathcal{V}_{12}\to\Delta$ of $N_{12}\cup N_{21}$ is given by
\begin{align*}
&\mathcal{V}_{12}=\set{(x_{12,\alpha},y_{12,\alpha},y_{21,\alpha})\in V_{12}\oplus V_{21}|y_{12,\alpha}y_{21,\alpha}\in\Delta},\\
&\varpi_{12}(x_{12,\alpha},y_{12,\alpha},y_{21,\alpha})=y_{12,\alpha}y_{21,\alpha},
\end{align*}
where $V_{ij}=t_{ij}^{-1}(0,\infty ]\subset N_{ij}$.
We have $\varpi_{12}^{-1}(0)=V_{12}\cup V_{21}$ and $V_{12}\cap V_{21}=D_{12}$.
Define $p_{ij}:\mathcal{V}_{12}\to N_{ij}$ by
\begin{equation*}
p_{ij}(x_{12,\alpha},y_{12,\alpha},y_{21,\alpha})=(x_{12,\alpha},y_{ij,\alpha}),
\end{equation*}
and also define $p_{ij,\zeta}=\restrict{p_{ij}}{\varpi_{12}^{-1}(\zeta )}$.
Suppose $\zeta\in\Delta^{\! *}=\Delta\setminus\{ 0\}$.
Then we see that $p_{ij,\zeta}$ is an injective diffeomorphism, i.e., a diffeomorphism onto its image.
We have
\begin{equation*}
p_{21}=h_{12,\zeta}\circ p_{12}\quad\text{on }\varpi_{12}^{-1}(\zeta ),
\end{equation*}
from which it follows that
\begin{equation}\label{eq:relation_hp}
h_{ij,\zeta}=p_{ji,\zeta}\circ p_{ij,\zeta}^{-1}\quad\text{on }p_{ij}(\varpi_{12}^{-1}(\zeta )).
\end{equation}
Also, we see from $\eqref{eq:pullback_t_ji}$ that
\begin{equation}\label{eq:t_12+t_21}
t_{12}\circ p_{12,\zeta}+t_{21}\circ p_{21,\zeta}=-\log\norm{\zeta}^2=2T_\zeta .
\end{equation}
Next suppose $\zeta =0$.
Then $p_{ij,\zeta}=p_{ij,0}$ is an identity map on $V_{ij}$ and coincides on $V_{ji}$ with the projection $\pi_{ji}$ to $D_{12}$.
Thus, it follows from $\eqref{eq:t_12+t_21}$ that the image of $p_{ij,\zeta}$ for $\zeta\in\Delta$ is given by
\begin{equation}\label{eq:image_p_zeta}
p_{ij,\zeta}(\varpi_{12}^{-1}(\zeta ))=\begin{dcases}
V_{ij}=t_{ij}^{-1}(0,\infty ]&\text{for }\zeta =0,\\
t_{ij}^{-1}(0,2T_\zeta)&\text{for } \zeta\in\Delta^{\! *}.\end{dcases}
\end{equation}
Hence, we see from $\eqref{eq:image_p_zeta}$ that $\varpi_{12}^{-1}(\zeta )$ for $\zeta\in\Delta^{\! *}$
is obtained by gluing together $t_{12}^{-1}(0,2T_\zeta )$ and $t_{21}^{-1}(0,2T_\zeta )$ using the diffeomorphism $h_{12,\zeta}$.
For $\zeta\in\Delta^{\! *}$, let us define a holomorphic volume form $\Omega_{12,\zeta}$ and a smooth Hermitian form $\omega_{12,\zeta}$
on $\varpi_{12}^{-1}(\zeta )$ by
\begin{equation*}
\Omega_{12,\zeta}=\left(p_{ij,\zeta}^{-1}\right)^*\Omega_{ij}^\infty\quad\text{and}\quad
\omega_{12,\zeta}=\left(p_{ij,\zeta}^{-1}\right)^*\omega_{ij}^\infty .
\end{equation*}
Then we see from $\eqref{eq:pullback_Oomega_ij_double}$ and $\eqref{eq:relation_hp}$
that $(\Omega_{12,\zeta},\omega_{12,\zeta})$ is a well-defined $\SU (2)$-structure on $\varpi_{12}^{-1}(\zeta )$.
Meanwhile, if $\zeta =0$, then $(\Omega_{ij}^\infty ,\omega_{ij}^\infty )$ gives a singular $\SU (2)$-structure
on the irreducible component $V_{ij}$ of $\varpi_{12}^{-1}(0)$.

We have injective diffeomorphisms $\widetilde{\Phi}_{ij}:\mathcal{V}_{12}\setminus (N_{ji}\setminus D_{12})\to X_i\times\Delta$
for $(i,j)=(1,2),(2,1)$ defined by
\begin{align*}
\widetilde{\Phi}_{ij}&=(\Phi_{ij}\circ p_{ij},\varpi_{12}),\quad\text{that is,}\\
\widetilde{\Phi}_{ij}(x_{12,\alpha},y_{12,\alpha},y_{21,\alpha})
&=(\Phi_{ij}(x_{12,\alpha},y_{ij,\alpha}),\varpi_{12}(x_{12,\alpha},y_{12,\alpha},y_{21,\alpha})),
\end{align*}
which are compatible with the projections to $\Delta$.
Noting that 
\begin{equation*}
\mathcal{V}_{12}\setminus (N_{ji}\setminus D_{12})=V_{ij}\cup\smallcup_{\zeta\in\Delta^{\! *}}\varpi_{12}^{-1}(\zeta ),
\end{equation*}
we see from $\eqref{eq:image_p_zeta}$ that the image of $\widetilde{\Phi}_{ij}$ is given by
\begin{equation}\label{eq:image_Phitilde_zeta}
\Image\widetilde{\Phi}_{ij}=\smallcup_{\zeta\in\Delta}(\Phi_{ij}\circ p_{ij}(\varpi_{12}^{-1}(\zeta )),\zeta )
=(W_{ij}\times\{ 0\})\cup\smallcup_{\zeta\in\Delta^*}\widetilde{t}_{ij}^{-1}(0,2T_\zeta )\times\{\zeta\} .
\end{equation}

Now let $W_{ij}^T$ and $X_i^T$ be as defined in $\eqref{eq:W-X^T}$.
Then defining $\mathcal{W}_{ij}\subset W_{ij}\times\Delta$ and $\mathcal{X}_i\subset X_i\times\Delta$ with $\mathcal{W}_{ij}\subset\mathcal{X}_i$ by
\begin{equation}\label{eq:XX-WW}
\mathcal{W}_{ij}=\smallcup_{\zeta\in\Delta}W_{ij}^{T_\zeta +1}\times\{\zeta\}\quad\text{and}\quad
\mathcal{X}_i=\smallcup_{\zeta\in\Delta}X_i^{T_\zeta +1}\times\{\zeta\} ,
\end{equation}
we see from $\eqref{eq:image_Phitilde_zeta}$ that $\widetilde{\Phi}_{ij}^{-1}:\mathcal{W}_{ij}\to\mathcal{V}_{12}\setminus (N_{ji}\setminus D_{12})$
is also an injective diffeomorphism which is compatible with the projections to $\Delta$.
Also, we see from $\eqref{eq:t_12+t_21}$ that
$\widetilde{\Phi}_{12}^{-1}\cup\widetilde{\Phi}_{21}^{-1}:\mathcal{W}_{12}\cup\mathcal{W}_{21}\to\mathcal{V}_{12}$ is surjective.
Thus, we can glue together $\mathcal{X}_1,\mathcal{X}_2$, and $\mathcal{V}_{12}$ using the injective diffeomorphism
$\widetilde{\Phi}_{ij}^{-1}:\mathcal{W}_{ij}\to\mathcal{V}_{12}\setminus (N_{ji}\setminus D_{12})$
to obtain a family of local smoothings of $X_1\cup X_2$ around $D_{12}$.
This gluing procedure is diagrammed as follows:
\begin{equation}\label{eq:gluing_double}
\vcenter{\xymatrix@R=0pt@C=0pt{
\mathcal{X}_1&&&\mathcal{V}_{12}\ar@{=}[r]&\mathcal{V}_{12}&&&\mathcal{X}_2\\
\dsubset&&&\dsubset&\dsubset&&&\dsubset\\
\mathcal{W}_{12}\ar@{^{(}->}[rrr]^-{\widetilde{\Phi}_{12}^{-1}}&&&\mathcal{V}_{12}\setminus (N_{21}\setminus D_{12})
&\mathcal{V}_{12}\setminus (N_{12}\setminus D_{12})&&&\mathcal{W}_{21}\ar@{_{(}->}[lll]_-{\widetilde{\Phi}_{21}^{-1}}\\
\dsubset&&&\dsubset&\dsubset&&&\dsubset\\
W_{12}^{T_\zeta +1}\times\{\zeta\}\ar@{^{(}->}[rrr]^-{\widetilde{\Phi}_{12}^{-1}}
&&&\varpi_{12}^{-1}(\zeta )\setminus (V_{21}\setminus D_{12})&\varpi_{12}^{-1}(\zeta )\setminus (V_{12}\setminus D_{12})
&&&W_{21}^{T_\zeta +1}\times\{\zeta\}\ar@{_{(}->}[lll]_-{\widetilde{\Phi}_{21}^{-1}}
}}
\end{equation}
where the last line yields the fiber $\varpi_{12}^{-1}(\zeta )$ of the local smoothings $\varpi_{12}:\mathcal{V}_{12}\to\Delta$ over $\zeta\in\Delta$.

\subsection{Local smoothings of $X_i\cup X_j\cup X_k$ around $D_{ij}\cup D_{jk}\cup D_{ki}$}\label{subsec:smoothing_triple}
Here we suppose $T_{123}\neq\emptyset$ and consider local smoothings of $X_1\cup X_2\cup X_3$ around $D_{12}\cup D_{23}\cup D_{31}$.
Indices $i,j,k$ will take $1$, $2$, or $3$, while $\ell$ will take all possible values besides $1,2,3$.
For local smoothings of $X_{n_1}\cup X_{n_2}\cup X_{n_3}$ around $D_{n_1n_2}\cup D_{n_2n_3}\cup D_{n_3n_1}$ for
general $n_1,n_2,n_3$ with $n_1<n_2<n_3$, we will be done if we replace subscripts $1,2,3$ and $i,j,k$ with $n_1,n_2,n_3$ and $n_i,n_j,n_k$, respectively.
For later convenience, we will use $\epsilon_{ij}=(j-i)/\norm{j-i}$ as before,
and the Levi-Civita symbol $\epsilon_{ijk}=\epsilon_{ij}\epsilon_{ik}\epsilon_{jk}$,
so that we have $\epsilon_{n_in_j}=\epsilon_{ij}$ and $\epsilon_{n_in_jn_k}=\epsilon_{ijk}$.
Define $\nu_{ij}$ by $\nu_{ij}=\sum_{k=1}^3k\norm{\epsilon_{ijk}}$,
that is, $\nu_{ij}\in\{ 1,2,3\}$ is the unique number such that $\epsilon_{ij\nu_{ij}}\neq 0$.
Also, let $D'_{ij}=D_{ij}\setminus T_{ij}$ and $N'_{ij}=\restrict{N_{ij}}{D'_{ij}}$ as in Section $\ref{subsec:coordinates}$.

Recall that we have a holomorphic volume form $\Omega_i$ on $X_i\setminus D_i$ with a local representation around $D_{ij}$ in $\eqref{eq:Omega_i_Lambda_ij}$,
and a holomorphic volume form $\psi_{D_{ij}}=\residue_{D_{ij}}\Omega_i$ in $\eqref{eq:psi_D_ij}$
and a smooth Hermitian form $\omega_{D_{ij}}$ in $\eqref{eq:omega_D_ij}$ on $D'_{ij}$.
We define a smooth complex volume form $\Omega_{ij}^\infty$
and a smooth Hermitian form $\omega_{ij}^\infty$ on $N'_{ij}\setminus D'_{ij}$ by
\begin{equation}\label{eq:Oomega_ij_triple}
\begin{aligned}
\Omega_{ij}^\infty&=\pi_{ij}^*\psi_{D_{ij}}\wedge\del t_{ij},\\
\omega_{ij}^\infty&=\frac{\sqrt{3}}{2}\pi_{ij}^*\omega_{D_{ij}}+\frac{\I}{\sqrt{3}}\del t_{ij}\wedge\delbar t_{ij}\\
&=\frac{\I}{\sqrt{3}}\left\{\frac{3}{4}\pi_{ij}^*\left(\psi_{D_{ij}}\wedge\overline{\psi}_{D_{ij}}\right) +\del t_{ij}\wedge\delbar t_{ij}\right\} .
\end{aligned}
\end{equation}

Then as in Lemma $\ref{lem:SU(2)_double}$, we see that
$(\Omega_{ij}^\infty ,\omega_{ij}^\infty)$ defines an $\SU (2)$-structure on $N'_{ij}\setminus D'_{ij}$
such that $\Omega_{ij}^\infty$ is holomorphic and the associated metric is cylindrical.
In particular, $\Omega_{ij}^\infty$ and $\omega_{ij}^\infty$ are locally represented
on $\pi_{ij}^{-1}(U'_{ij,\beta})\setminus D'_{ij}$ for $\beta\in\Lambda_{ijk}\subset\Lambda_{ij}^{(2)}$ as
\begin{equation}\label{eq:Oomega_ij_triple_local}
\begin{aligned}
\Omega_{ij}^\infty =&-\sigma_{ijk}\frac{\der x_{ij,\beta}}{x_{ij,\beta}}\wedge\frac{\der y_{ij,\beta}}{y_{ij,\beta}},\\
\omega_{ij}^\infty =&\frac{\I}{2\sqrt{3}}\Biggl\{\frac{\der x_{ij,\beta}\wedge\der\overline{x}_{ij,\beta}}{\norm{x_{ij,\beta}}^2}
+\frac{\der y_{ij,\beta}\wedge\der\overline{y}_{ij,\beta}}{\norm{y_{ij,\beta}}^2}\\
&\phantom{\frac{\I}{2\sqrt{3}}\Biggr\{}+\left(\frac{\der x_{ij,\beta}}{x_{ij,\beta}}+\frac{\der y_{ij,\beta}}{y_{ij,\beta}}\right)
\wedge\left(\frac{\der\overline{x}_{ij,\beta}}{\overline{x}_{ij,\beta}}+\frac{\der\overline{y}_{ij,\beta}}{\overline{y}_{ij,\beta}}\right)\Biggr\}
\end{aligned}
\end{equation}
where we used in $\eqref{eq:Oomega_ij_triple}$
\begin{equation*}
\pi_{ij}^*\psi_{D_{ij}}=\sigma_{ijk}\frac{\der x_{ij,\beta}}{x_{ij,\beta}}\quad\text{and}\quad
\del t_{ij}=-\frac{1}{2}\frac{\der x_{ij,\beta}}{x_{ij,\beta}}-\frac{\der y_{ij,\beta}}{y_{ij,\beta}}
\quad\text{for }\beta\in\Lambda_{ijk}\subset\Lambda_{ij}^{(2)},
\end{equation*}
which follows from $\eqref{eq:psi_D_ij}$ and $\eqref{eq:t_ij_triple}$ respectively.
As in Section $\ref{subsec:smoothing_double}$, we can also regard $(\Omega_{ij}^\infty ,\omega_{ij}^\infty)$
as an $\SU (2)$-structure on $W_{ij}\setminus D_i\subset X_i$ via $\Phi_{ij}$.

We see from $\eqref{eq:zw}$, $\eqref{eq:zw_approx}$, $\eqref{eq:relation_xy}$ and $\eqref{eq:Oomega_ij_triple_local}$ that
\begin{gather}
\norm{\Omega_i-\Omega_{ij}^\infty}=O(e^{-t_{ij}/2}),\quad\norm{\omega_{ij}-\omega_{ij}^\infty}=O(e^{-t_{ij}/2})
\quad\textrm{on }U'_{i,\alpha}\textrm{ for }\alpha\in\Lambda_{ij}^{(1)},\textrm{ and}
\label{eq:asymptotic_SU(2)_triple_1}\\
\Omega_{ij}^\infty =\Omega_i =\Omega_{ik}^\infty ,\quad\omega_{ij}^\infty =\omega_{ij}=\omega_{ik}=\omega_{ik}^\infty\quad\textrm{on }
U'_{i,\beta}\textrm{ for }\beta\in\Lambda_{ijk}\subset\Lambda_{ij}^{(2)},\label{eq:asymptotic_SU(2)_triple_2}
\end{gather}
where $\omega_{ij}$ is the $(1,1)$-part of $\omega_{ij}^\infty$, normalized so that $\Omega_i\wedge\overline{\Omega}_i =2\omega_{ij}\wedge\omega_{ij}$,
and $\norm{\cdot}$ is measured by the cylindrical metric $g_{ij}^\infty$ associated with $\omega_{ij}^\infty$.
Letting $c>1$ and shrinking $U_{i,\alpha}$ for each $\alpha\in\Lambda_{ij}^{(1)}\cup\Lambda_{ij}^{(2)}$ by the coordinate transformation $w_{ij,\alpha}\mapsto cw_{ij,\alpha}$
in conditions (B), (G) and (C), (H) if necessary, we may assume that $(\Omega_i,\omega_{ij})$ is an $\SU (2)$-structure on $W_{ij}\setminus D_i$.
Thus, we have the associated Hermitian metric $g_{ij}$ on $W_{ij}\setminus D_i$ such that
\begin{equation*}
g_{ij}=g_{ij}^\infty =g_{ik}^\infty =g_{ik}\quad\textrm{on }
U'_{i,\beta}\textrm{ for }\beta\in\Lambda_{ijk}\subset\Lambda_{ij}^{(2)}.
\end{equation*}
We also see from $\eqref{eq:pullback_t_ji}$ and $\eqref{eq:Oomega_ij_triple}$ that
\begin{equation}\label{eq:pullback_Oomega_triple}
h_{ij,\zeta}^*\Omega_{ji}^\infty =\Omega_{ij}^\infty,\quad h_{ij,\zeta}^*\omega_{ji}^\infty =\omega_{ij}^\infty
\quad\text{for all }\zeta\in\Delta^{\! *}.
\end{equation}

Now we construct a family of local smoothings of $X_1\cup X_2 \cup X_3$ around $D_{12}\cup D_{23}\cup D_{31}$.
The construction consists of the following steps.
\renewcommand{\labelenumi}{\emph{Step }$\arabic{enumi}$.}
\begin{enumerate}
\item Fix $\epsilon\leqslant e^{-1}$ and let $\Delta =\Delta_\epsilon =\set{\zeta\in\C|\norm{\zeta}<\epsilon}$ be a domain in $\C$.
Following Section $\ref{subsec:smoothing_double}$, we construct a family of local smoothings $\varpi'_{ij}:\mathcal{V}'_{ij}\to\Delta$
of $N'_{ij}\cup N'_{ji}$ in $N'_{ij}\oplus N'_{ji}$ around $D'_{ij}=N'_{ij}\cap N'_{ji}$
such that $\varpi_{ij}^{\prime -1}(0)=V'_{ij}\cup V'_{ji}$, where $V'_{ij}\subset N'_{ij}$ is given in $\eqref{eq:V_ij}$.
This gives a local model of a family of smoothings of $X_i\cup X_j$ around $D'_{ij}\subset X_i\cap X_j$.
We see that $\varpi_{ij}^{\prime -1}(\zeta )$ for $\zeta\in\Delta^{\! *}$ is obtained by gluing together $t_{ij}^{-1}(0,2T_\zeta )\subset V'_{ij}$ and
$t_{ji}^{-1}(0,2T_\zeta )\subset V'_{ji}$ using the diffeomorphism $h_{ij,\zeta}$ given by $\eqref{eq:h_ij}$,
where $T_\zeta$ is defined by $\eqref{eq:T_zeta}$.
Also, $\varpi_{ij}^{\prime -1}(\zeta )$ has an $\SU (2)$-structure which is induced from $N'_{ij}$ and $N'_{ji}$.\\

\item Let $\varpi'_{ij,\beta}:\mathcal{V}'_{ij,\beta}\to\Delta$ for $\beta\in\Lambda_{ij}^{(2)}$ be the restriction of $\varpi'_{ij}$
to $\mathcal{V}'_{ij,\beta}\subset V'_{ij,\beta}\oplus V'_{ji,\beta}$, where $V'_{ij,\beta}$ is given in $\eqref{eq:component_V_ij}$.
Then we can extend $\varpi'_{ij,\beta}:\mathcal{V}'_{ij,\beta}\to\Delta$ for each $\beta\in\Lambda_{ij}^{(2)}$
to $\varpi_{ij,\beta}:\mathcal{V}_{ij,\beta}\to\Delta$ so that $\mathcal{V}_{ij,\beta}$ is defined in $V_{ij,\beta}\oplus V_{ji,\beta}$,
where $V_{ij,\beta}$ is given in $\eqref{eq:component_V_ij}$.
By replacing each $\varpi'_{ij,\beta}$ with $\varpi_{ij,\beta}$ in $\varpi'_{ij}$,
we obtain a family of local smoothings $\varpi_{ij}:\mathcal{V}_{ij}\to\Delta$ of $N_{ij}\cup N_{ji}$ in $N_{ij}\oplus N_{ji}$ around $D_{ij}$.\\

\item For $\C^3=\{\bm{u}=(u^1,u^2,u^3)\}$, let $H_i=\set{\bm{u}\in\C^3|u^i=0}$ be hyperplanes in $\C^3$,
and let $L_{ij}=H_i\cap H_j$ be double curves.
Then for $\beta\in\Lambda_{123}$ we construct a family of local smoothings $\varpi_{123,\beta}:\mathcal{V}_{123,\beta}\to\Delta$
of an SNC complex surface $H_1\cup H_2\cup H_3$ in $\C^3$ around $L_{12}\cup L_{23}\cup L_{31}$,
such that $\mathcal{V}_{123,\beta}$ is an open neighborhood of the triple point $\bm{0}$ in $\C^3$ and
$\varpi_{123,\beta}^{-1}(0)=(H_1\cup H_2\cup H_3)\cap\mathcal{V}_{123}$.
Also, following Example $\ref{ex:tetra_in_CP3}$, we define an $\SU (2)$-structure on each fiber $\varpi_{123,\beta}^{-1}(\zeta )$.\\

\item We define a diffeomorphism $\Psi_{ij,\beta}:\mathcal{V}_{123,\beta}\to\mathcal{V}_{ij,\beta}\subset\mathcal{V}_{ij}$ for $\beta\in\Lambda_{123}$
which is compatible with the projections to $\Delta$ and preserves the $\SU (2)$-structures on the corresponding fibers.
Then the transition functions $\gamma_{i'j',ij,\beta}=\Psi_{i'j',\beta}\circ\Psi_{ij,\beta}^{-1}:\mathcal{V}_{ij,\beta}\to\mathcal{V}_{i'j',\beta}$ satisfy a cocycle condition.
Thus, we can glue together $\mathcal{V}_{12}$, $\mathcal{V}_{23}$, $\mathcal{V}_{31}$, and $\mathcal{V}_{123,\beta}$ for all $\beta\in\Lambda_{123}$
using the diffeomorphisms $\Psi_{12,\beta}$, $\Psi_{23,\beta}$, and $\Psi_{31,\beta}$
to obtain a family of local smoothing $\varpi_{123}:\mathcal{V}_{123}\to\Delta$ of $N_{12}\cup N_{23}\cup N_{31}$ around $D_{12}\cup D_{23}\cup D_{31}$.
This gives a local model of a family of smoothings of $X_1\cup X_2\cup X_3$ around $D_{12}\cup D_{23}\cup D_{31}$.\\

\item We define an injective diffeomorphism $\widetilde{\Phi}_{ij}:\mathcal{V}_{ij}\setminus (N_{ji}\setminus D_{ij})\to W_{ij}\times\Delta$,
where $W_{ij}=\Phi_{ij}(V_{ij})$ is a neighborhood of $D_{ij}$ in $X_i$.
Let $\mathcal{W}_{ij}=\smallcup_{\zeta\in\Delta}W_{ij}^{T_\zeta +1}\times\{\zeta\}$ and 
$\mathcal{X}_i=\smallcup_{\zeta\in\Delta}X_i^{T_\zeta +1}\times\{\zeta\}$ be as defined in $\eqref{eq:XX-WW}$,
where $W_{ij}^T$ and $X_i^T$ are defined in $\eqref{eq:W-X^T}$.
Then we can define injective diffeomorphisms $\widetilde{\Phi}_{ij}^{-1}\cup\widetilde{\Phi}_{ik}^{-1}:
\mathcal{W}_{ij}\cup\mathcal{W}_{ik}\to\mathcal{V}_{ij}\cup\mathcal{V}_{ik}\subset\mathcal{V}_{123}$.
Thus by gluing together $\mathcal{X}_1,\mathcal{X}_2,\mathcal{X}_3$ and $\mathcal{V}_{123}$
along $\mathcal{W}_{ij}\cup\mathcal{W}_{ik}\subset\mathcal{X}_{i}$ for all triples $(i,j,k)$ with $\epsilon_{ijk}=1$
using the injective diffeomorphisms $\widetilde{\Phi}_{ij}^{-1}\cup\widetilde{\Phi}_{ik}^{-1}$,
we obtain the desired family of local smoothings of $X_1\cup X_2\cup X_3$ around $D_{12}\cup D_{23}\cup D_{31}$.
\end{enumerate}

\begin{step}\label{step:3-1}
Fix $\epsilon\leqslant e^{-1}$ and let $\Delta =\Delta_\epsilon =\set{\zeta\in\C|\norm{\zeta}<\epsilon}$ be a domain in $\C$.
Let $V'_{ij}$ be the neighborhood of $D'_{ij}$ in $N'_{ij}$ defined in $\eqref{eq:V_ij}$.
Then a family of local smoothings $\varpi'_{ij}:\mathcal{V}'_{ij}\to\Delta_\epsilon$ of $N'_{ij}\cup N'_{ji}$ over $\Delta_\epsilon$ is given by
\begin{align}
&\mathcal{V}'_{ij}=\Set{(x_{ij,\alpha},y_{ij,\alpha},y_{ji,\alpha})\in V'_{ij}\oplus V'_{ji}|\substack{\begin{aligned}
&y_{ij,\alpha}y_{ji,\alpha}\in\Delta\textrm{ if }\alpha\in\Lambda_{ij}^{(1)},\textrm{ and}\\
&x_{ij,\alpha}y_{ij,\alpha}y_{ji,\alpha}\in\Delta\textrm{ if }\alpha\in\Lambda_{ij}^{(2)}\end{aligned}}},\label{eq:VV'_ij}\\
&\varpi'_{ij}(x_{ij,\alpha},y_{ij,\alpha},y_{ji,\alpha})=
\begin{dcases}
y_{ij,\alpha}y_{ji,\alpha}&\textrm{if }\alpha\in\Lambda_{ij}^{(1)},\\
x_{ij,\alpha}y_{ij,\alpha}y_{ji,\alpha}&\textrm{if }\alpha\in\Lambda_{ij}^{(2)}.
\end{dcases}\nonumber
\end{align}
Note that the projection $\varpi'_{ij}$ is well-defined according to condition (J) in Section $\ref{subsec:coordinates}$.
Let $p'_{ij}:\mathcal{V}'_{ij}\to N'_{ij}$ be the projection.
Then following the argument in Section $\ref{subsec:smoothing_double}$, we see that 
$p'_{ij,\zeta}$ is a diffeomorphism on $\varpi_{ij}^{\prime -1}(\zeta)$ for $\zeta\in\Delta^{\! *}$,
while on $\varpi_{ij}^{\prime -1}(0)=V'_{ij}\cup V'_{ji}$ we have
\begin{equation}\label{eq:p'_ij,0}
p'_{ij,0}\text{ is }\begin{dcases}
\text{an identity map}&\text{on }V'_{ij},\\
\text{the projection map to }D'_{ij}&\text{on }V'_{ji}.\end{dcases}
\end{equation}
Also, in the same way as we obtained $\eqref{eq:relation_hp}$ and $\eqref{eq:t_12+t_21}$ in Section $\ref{subsec:smoothing_double}$,
we have for $\zeta\in\Delta^{\! *}$ that
\begin{gather}
h_{ij,\zeta}=p'_{ji,\zeta}\circ p_{ij,\zeta}^{\prime -1}\quad\text{on }\varpi_{ij}^{\prime -1}(\zeta ),\quad\text{and}\label{eq:relation_hp'_triple}\\
t_{ij}\circ p'_{ij,\zeta}+t_{ji}\circ p'_{ji,\zeta}=-\log\norm{\zeta}^2=2T_\zeta .\label{eq:t_ij+t_ji}
\end{gather}
Thus, it follows from $\eqref{eq:t_ij+t_ji}$ that $\varpi_{ij}^{\prime -1}(\zeta )$ for $\zeta\in\Delta^{\! *}$ is obtained
by gluing together $t_{ij}^{-1}(0,2T_\zeta )\subset N'_{ij}$ and $t_{ji}^{-1}(0,2T_\zeta)\subset N'_{ji}$ using the diffeomorphism $h_{ij,\zeta}$.

Now if we restrict the domain of $p'_{ij}$ to $\mathcal{V}'_{ij}\setminus (V'_{ji}\setminus D'_{ij})$ and correspondingly that of $p'_{ij,\zeta}$,
then we see from $\eqref{eq:p'_ij,0}$ that $p'_{ij,\zeta}$ is a diffeomorphism for all $\zeta\in\Delta$.
Thus, the $\SU (2)$-structure $(\Omega_{ij}^\infty ,\omega_{ij}^\infty )$ on $N_{ij}$
induces an $\SU (2)$-structure $(\Omega'_{ij,\zeta},\omega'_{ij,\zeta})$ on $\varpi_{ij}^{-1}(\zeta )$ as
\begin{equation*}
(\Omega'_{ij,\zeta},\omega'_{ij,\zeta})=p_{ij,\zeta}^{\prime *}(\Omega_{ij}^\infty ,\omega_{ij}^\infty )=\begin{dcases}
p_{ij,\zeta}^{\prime *}(\Omega_{ij}^\infty ,\omega_{ij}^\infty )&\text{on }\omega_{ij}^{\prime -1}(\zeta )\text{ for }\zeta\in\Delta^{\! *},\\
(\Omega_{ij}^\infty ,\omega_{ij}^\infty )&\text{on }V'_{ij}\subset\varpi_{ij}^{\prime -1}(0)\text{ for }\zeta =0,\end{dcases}
\end{equation*}
which satisfies
\begin{equation*}
(\Omega'_{ij,\zeta},\omega'_{ij,\zeta})=(\Omega'_{ji,\zeta},\omega'_{ji,\zeta})\quad\text{on }\varpi_{ij}^{\prime -1}(\zeta )\text{ for }\zeta\in\Delta^{\! *}
\end{equation*}
due to $\eqref{eq:pullback_Oomega_triple}$ and $\eqref{eq:relation_hp'_triple}$.
In particular, the induced $\SU (2)$-structure $(\Omega'_{ij,0},\omega'_{ij,0})\cup (\Omega'_{ji,0},\omega'_{ji,0})$
on $\varpi_{ij}^{\prime -1}(0)=V'_{ij}\cup V'_{ji}$ coincides with the original one 
$(\Omega_{ij}^\infty ,\omega_{ij}^\infty )\cup (\Omega_{ji}^\infty ,\omega_{ji}^\infty )$ on $N'_{ij}\cup N'_{ji}$.
\end{step}

\begin{step}
Let us study $\varpi'_{ij}:\mathcal{V}'_{ij}\to\Delta$ constructed above in more detail.
According to the definition of $V'_{ij}$ in $\eqref{eq:V_ij}$, and $\eqref{eq:zw_approx}$, $\eqref{eq:norm_ij_triple}$, we can decompose $\mathcal{V}'_{ij}$ as
\begin{align*}
\mathcal{V}'_{ij}=&\mathcal{V}_{ij}^{(1)}\cup\smallcup_{\beta\in\Lambda_{ij}^{(2)}}\mathcal{V}'_{ij,\beta},\quad\text{where we set}\\
\mathcal{V}_{ij}^{(1)}=&\Set{\bm{v}_{ij,\alpha}\in V_{ij}^{(1)}\oplus V_{ji}^{(1)}|\alpha\in\Lambda_{ij}^{(1)},y_{ij,\alpha}y_{ji,\alpha}\in\Delta},
\quad\text{and}\\
\mathcal{V}'_{ij,\beta}=&\set{\bm{v}_{ij,\beta}\in V'_{ij,\beta}\oplus V'_{ji,\beta}|x_{ij,\beta}y_{ij,\beta}y_{ji,\beta}\in\Delta}\\
=&\Set{\bm{v}_{ij,\beta}\in\C^3|\substack{\begin{aligned}
&0<\norm{x_{ij,\beta}}<1,\norm{y_{ij,\beta}}<1,\norm{y_{ji,\beta}}<1\text{ and}\\
&x_{ij,\beta}y_{ij,\beta}y_{ji,\beta}\in\Delta\end{aligned}}}\quad\text{for }\beta\in\Lambda_{ij}^{(2)}.
\end{align*}
Thus, we can extend $\mathcal{V}'_{ij,\beta}$ smoothly to $\mathcal{V}_{ij,\beta}$ defined by
\begin{align*}
\mathcal{V}_{ij,\beta}&=\set{\bm{v}_{ij,\beta}\in V_{ij,\beta}\oplus V_{ji,\beta}|x_{ij,\beta}y_{ij,\beta}y_{ji,\beta}\in\Delta}\\
&=\Set{\bm{v}_{ij,\beta}\in\C^3|\substack{\begin{aligned}
&\norm{x_{ij,\beta}}<1,\norm{y_{ij,\beta}}<1,\norm{y_{ji,\beta}}<1\text{ and}\\
&x_{ij,\beta}y_{ij,\beta}y_{ji,\beta}\in\Delta\end{aligned}}},
\end{align*}
where $V_{ij,\beta}$ is defined in $\eqref{eq:component_V_ij}$.
Hence, replacing $\mathcal{V}'_{ij,\beta}$ with $\mathcal{V}_{ij,\beta}$ for all $\beta\in\Lambda_{ij}^{(2)}$ in $\mathcal{V}'_{ij}$
and extending the projection $\varpi'_{ij}$ correspondingly,
we obtain a family of local smoothings $\varpi_{ij}:\mathcal{V}_{ij}\to\Delta$ of $N_{ij}\cup N_{ji}$ in $N_{ij}\oplus N_{ji}$ around $D_{ij}$.
This turns out to be the same as replacing $V'_{ij}\oplus V'_{ji}$ in $\eqref{eq:VV'_ij}$ with $V_{ij}\oplus V_{ji}$, where $V_{ij}$ is defined in $\eqref{eq:V_ij}$.
Since $\mathcal{V}_{ij}$ and $\mathcal{V}_{ji}$ only differ by the order of $V_{ij}$ and $V_{ji}$ in the definition,
we will identify $\mathcal{V}_{ij}$ and $\mathcal{V}_{ji}$.

Now define $H_{ij,\beta}\subset\mathcal{V}_{ij,\beta}$ for $\beta\in\Lambda_{ij}^{(2)}$ and $H_{ij}\subset\mathcal{V}_{ij}$ by
\begin{equation*}
H_{ij,\beta}=\Set{\bm{v}_{ij,\beta}\in V_{ij,\beta}\oplus V_{ji,\beta}|x_{ij,\beta}=0}\quad\text{and}\quad
H_{ij}=\smallcup_{\beta\in\Lambda_{ij}^{(2)}}H_{ij,\beta},
\end{equation*}
so that we have $\varpi_{ij}^{-1}(0)=V_{ij}\cup V_{ji}\cup H_{ij}$.
Let $p_{ij}:\mathcal{V}_{ij}\to V_{ij}$ be the projection and $p_{ij,\zeta}=\restrict{p_{ij}}{\varpi_{ij}^{-1}(\zeta )}$.
If we restrict the domain of $p_{ij}$ to $\mathcal{V}_{ij}\setminus ((V_{ji}\setminus D_{ij})\cup (H_{ij}\setminus\pi_{ij}^{-1}(T_{ij})))$,
and correspondingly that of $p_{ij,\zeta}$, then $p_{ij,\zeta}$ is a diffeomorphism for all $\zeta\in\Delta$.
Then in the same way as in Step $\ref{step:3-1}$, we can define an $\SU (2)$-structure
$(\Omega_{ij,\zeta},\omega_{ij,\zeta})=p_{ij,\zeta}^*(\Omega_{ij}^\infty ,\omega_{ij}^\infty )$ on $\varpi_{ij}^{-1}(\zeta )$ for $\zeta\in\Delta$
satisfying $(\Omega_{ij,\zeta},\omega_{ij,\zeta})=(\Omega_{ji,\zeta},\omega_{ji,\zeta})$ on $\varpi_{ij}^{-1}(\zeta )$ for $\zeta\in\Delta^{\! *}$.
In particular, we have $(\Omega_{ij,0},\omega_{ij,0})=(\Omega_{ij}^\infty ,\omega_{ij}^\infty )$ on $V_{ij}\subset\varpi_{ij}^{-1}(0)$,
but at this point $H_{ij}\subset\varpi_{ij}^{-1}(0)$ does not have an $\SU (2)$-structure.
\end{step}

\begin{step}
Let $\C^3=\{\bm{u}=(u^1,u^2,u^3)\}$, $H_i=\set{\bm{u}\in\C^3|u^i=0}$, and $L_{ij}=H_i\cap H_j$ as above.
For $\beta\in\Lambda_{123}$, we construct a family of local smoothings $\varpi_{123,\beta}:\mathcal{V}_{123,\beta}\to\Delta$
of an SNC complex surface $H_1\cup H_2\cup H_3$ in $\C^3$ around $L_{12}\cup L_{23}\cup L_{31}$ by
\begin{align*}
&\mathcal{V}_{123,\beta}=\Set{\bm{u}\in\C^3|\norm{u^1}<1,\norm{u^2}<1,\norm{u^3}<1\text{ and }u^1u^2u^3\in\Delta},\quad\text{ and}\\
&\varpi_{123,\beta}(u^1,u^2,u^3)=u^1u^2u^3.
\end{align*}
Then $\mathcal{V}_{123,\beta}$ is an open neighborhood of the triple point $\bm{0}$ in $\C^3$
and $\varpi_{123,\beta}^{-1}(0)=(H_1\cup H_2\cup H_3)\cap\mathcal{V}_{123,\beta}$.
To define an $\SU (2)$-structure on each fiber $\varpi_{123}^{-1}(\zeta )$ over $\zeta\in\Delta$,
let $\eta^i=\der u^i/u^i$ be a meromorphic $1$-form on $\C^3$, and
consider a meromorphic volume form $\Omega_{H_i,\beta}$ and a singular Hermitian form $\omega_{H_i,\beta}$ on $H_i$ defined by
\begin{align*}
\Omega_{H_i,\beta}&=-\sigma_{ijk}\eta^j\wedge\eta^k,\\
\omega_{H_i,\beta}&=\frac{\I}{2\sqrt{3}}\left\{\eta^j\wedge\overline{\eta}^j
+\eta^k\wedge\overline{\eta}^k+(\eta^j+\eta^k)\wedge\left(\overline{\eta}^j+\overline{\eta}^k\right)\right\},
\end{align*}
where $j$ and $k$ are chosen so that $\epsilon_{ijk}\neq 0$.
Then it is easy to check that $(\Omega_{H_i,\beta},\omega_{H_i,\beta})$ defines an $\SU (2)$-structure on $H_i$,
i.e., $\Omega_{H_i,\beta}\wedge\overline{\Omega}_{H_i,\beta}=2\omega_{H_i,\beta}\wedge\omega_{H_i,\beta}$.
Note that $\Omega_{H_i,\beta}$ is induced from a meromorphic volume form $\Omega_{\C^3,\beta}$ by
\begin{align*}
\Omega_{H_i,\beta}&=\residue_{H_i}{\Omega_{\C^3,\beta}},\quad\text{where}\\
\Omega_{\C^3,\beta}&=\sigma_{123}\,\eta^1\wedge\eta^2\wedge\eta^3=\sigma_{ijk}\,\eta^i\wedge\eta^j\wedge\eta^k\quad\text{for }\epsilon_{ijk}\neq 0.
\end{align*}

To define an $\SU (2)$-structure $(\Omega_{123,\beta ,\zeta},\omega_{123,\beta ,\zeta})$
on each fiber $\varpi_{123,\beta}^{-1}(\zeta)$ over $\zeta\in\Delta$, we define projections $p_i:\C^3\to H_i$ by
\begin{equation*}
p_i(u^1,u^2,u^3)=(u^j,u^k),
\end{equation*}
where $j$ and $k$ are determined so that $j<k$ and $\epsilon_{ijk}\neq 0$.
Also, we define $p_{i,\zeta}=\restrict{p_i}{\varpi_{123,\beta}^{-1}(\zeta )}$.
Suppose $\zeta\in\Delta^{\! *}$.
Then we see that
\begin{equation*}
p_{i,\zeta}:\varpi_{123,\beta}^{-1}(\zeta )\to\Set{\bm{u}\in\mathcal{V}_{123,\beta}|\norm{u^j}<1,\norm{u^k}<1,\norm{\zeta}<\norm{u^ju^k}}
\end{equation*}
is a diffeomorphism for $\epsilon_{ijk}\neq 0$, which satisfies
\begin{equation}\label{eq:pullback_Oomega_H}
(\Omega_{H_j,\beta},\omega_{H_j,\beta})=(p_{i,\zeta}\circ p_{j,\zeta}^{-1})^*(\Omega_{H_i,\beta},\omega_{H_i,\beta}).
\end{equation}
Meanwhile, if $\zeta =0$, then on each irreducible component of $\varpi_{123,\beta}^{-1}(0)$ we have that
\begin{equation*}
p_{i,0}\text{ is }\begin{dcases}
\text{an identity map}&\text{on }H_i\cap\mathcal{V}_{123,\beta},\\
\text{the projection map to }L_{i\ell}\subset H_i&\text{on }H_\ell\cap\mathcal{V}_{123,\beta}\text{ for }\ell\neq i.\end{dcases}
\end{equation*}
Thus to make $p_{i,\zeta}$ diffeomorphic on $\varpi_{123}^{-1}(\zeta )$ for all $\zeta\in\Delta$, we restrict the domain of $p_i$
to $\mathcal{V}_{123}\setminus ((H_j\setminus L_{ij})\cup (H_k\setminus L_{ik}))$, where $\epsilon_{ijk}=1$.
Consequently, by $\eqref{eq:pullback_Oomega_H}$, we have a well-defined $\SU (2)$-structure
$(\Omega_{123,\beta ,\zeta},\omega_{123,\beta ,\zeta})=p_{i,\zeta}^*(\Omega_{H_i,\beta},\omega_{H_i,\beta})$, which is also written as
\begin{equation*}
(\Omega_{123,\beta ,\zeta},\omega_{123,\beta ,\zeta})=\begin{dcases}
p_{i,\zeta}^*(\Omega_{H_i,\beta},\omega_{H_i,\beta})&\text{on }\varpi_{123,\beta}^{-1}(\zeta )\text{ for }\zeta\in\Delta^{\! *},\\
(\Omega_{H_i,0},\omega_{H_i,0})&\text{on }H_i\cap\varpi_{123,0}^{-1}(0)\text{ for }\zeta =0.\end{dcases}
\end{equation*}
We have an alternative expression of $\omega_{123,\beta ,\zeta}$ as
\begin{equation}\label{eq:omega_123,beta,zeta_alt}
\omega_{123,\beta ,\zeta}=\restrict{\omega_{\C^3}}{\varpi_{123,\beta}^{-1}(\zeta )}\quad\text{for }\zeta\in\Delta^{\! *},
\end{equation}
where $\omega_{\C^3}$ is a singular Hermitian form on $\C^3$ defined by
\begin{equation*}
\omega_{\C^3}=\frac{\I}{2\sqrt{3}}\left(\eta^1\wedge\overline{\eta}^1+\eta^2\wedge\overline{\eta}^2+\eta^3\wedge\overline{\eta}^3\right) ,
\end{equation*}
and $\eqref{eq:omega_123,beta,zeta_alt}$ follows from $\eta^1+\eta^2+\eta^3=0$ on $\varpi_{123,\beta}^{-1}(\zeta )$ for $\zeta\in\Delta^{\! *}$.
\end{step}

\begin{step}
Let $k=\nu_{ij}$ and $\beta\in\Lambda_{123}$.
By making the local coordinate $x_{ij,\beta}$ of $D_{ij}$ correspond to $u^k$ in $\mathcal{V}_{123,\beta}$ and using $\eqref{eq:relation_xy}$,
we can define a diffeomorphism $\Psi_{ij,\beta}:\mathcal{V}_{123,\beta}\to\mathcal{V}_{ij,\beta}\subset\mathcal{V}_{ij}$ by
\begin{equation*}
\Psi_{ij,\beta}:(u^1,u^2,u^3)\mapsto(x_{ij,\beta},y_{ij,\beta},y_{ji,\beta})=\begin{dcases}
(u^k,u^j,u^i)\quad\textrm{if }\epsilon_{ijk}=1,\\
(u^k,u^i,u^j)\quad\textrm{if }\epsilon_{ijk}=-1,\end{dcases}
\end{equation*}
which is compatible with the projections to $\Delta$ and maps $\varpi_{ij,\beta}^{-1}(\zeta )$ diffeomorphically onto $\varpi_{123,\beta}^{-1}(\zeta )$.
In particular, under the identification of $\mathcal{V}_{ij}$ and $\mathcal{V}_{ji}$, $\Psi_{ij,\beta}$ and $\Psi_{ji,\beta}$ are also identical.
Also, since the transition functions $\gamma_{i'j',ij,\beta}:\mathcal{V}_{ij,\beta}\to\mathcal{V}_{i'j',\beta}$ for $\epsilon_{ij},\epsilon_{i',j'}\neq 0$ are given by
\begin{equation*}
\gamma_{i'j',ij,\beta}=\Psi_{i'j',\beta}\circ\Psi_{ij,\beta}^{-1},
\end{equation*}
these satisfy a cocycle condition
\begin{equation*}
\gamma_{ij,jk,\beta}\circ\gamma_{jk,ki,\beta}\circ\gamma_{ki,ij,\beta}=\id_{\mathcal{V}_{ij,\beta}}\quad\text{for all }i,j,k\text{ with }\epsilon_{ijk}\neq 0.
\end{equation*}

Let $\Psi_{ij,\beta ,\zeta}$ be the restriction of $\Psi_{ij,\beta}$ on $\varpi_{123}^{-1}(\zeta )$.
Then we can check that
\begin{equation*}
\Psi_{ij,\beta,\zeta}^*(\Omega_{ij,\zeta},\omega_{ij,\zeta})=(\Omega_{123,\beta ,\zeta},\omega_{123,\beta ,\zeta})
\quad\text{for all }\zeta\in\Delta ,
\end{equation*}
so that $\Psi_{ij,\beta ,\zeta}$ preserves the $\SU (2)$-structures
on $\varpi_{123,\beta}^{-1}(\zeta )$ and $\varpi_{ij,\beta}^{-1}(\zeta )\subset\varpi_{ij}^{-1}(\zeta )$ for all $\zeta\in\Delta$.
Hence noting that $\mathcal{V}_{ij}\cap\mathcal{V}_{jk}=\smallcup_{\beta\in\Lambda_{ijk}}\mathcal{V}_{123,\beta}$,
we can glue together $\mathcal{V}_{123,\beta}$ and $\mathcal{V}_{ij}(=\mathcal{V}_{ji}$) for all $\beta\in\Lambda_{123}$ and $(i,j)=(1,2),(2,3),(3,1)$
using the diffeomorphisms $\Psi_{ij,\beta}:\mathcal{V}_{123,\beta}\to\mathcal{V}_{ij,\beta}\subset\mathcal{V}_{ij}$ for all $\beta\in\Lambda_{123}$
to obtain a family of local smoothings $\varpi_{123}:\mathcal{V}_{123}\to\Delta$ of $N_{12}\cup N_{23}\cup N_{31}$ around $D_{12}\cup D_{23}\cup D_{31}$.
This gluing procedure for constructing $\mathcal{V}_{123}$ is diagrammed as follows:
\begin{equation}\label{eq:gluing_V_123}
\vcenter{\xymatrix@R=1pc{
&\mathcal{V}_{ij}\ar@{}[r]|-*{\supset}&\mathcal{V}_{ij,\beta}\ar[dddl]_-{\gamma_{jk,ij}}^\circlearrowleft&&\\
&&&&\\
&&\mathcal{V}_{123,\beta}\ar[uu]|-{\Psi_{ij,\beta}}\ar[dl]^{\Psi_{jk,\beta}}\ar[dr]_{\Psi_{ki,\beta}}&&\\
\mathcal{V}_{jk}\ar@{}[r]|-*{\supset}&\mathcal{V}_{jk,\beta}\ar[rr]_-{\gamma_{ki,jk,\beta}}^\circlearrowleft&&\mathcal{V}_{ki,\beta}\ar[uuul]_-{\gamma_{ij,ki,\beta}}^\circlearrowleft&\mathcal{V}_{ki}\ar@{}[l]|-*{\subset},
}}
\end{equation}
where all arrows are diffeomorphisms.
By this diagram, we can regard each $\mathcal{V}_{ij}$ as an open submanifold of $\mathcal{V}_{123}$.
\end{step}

\begin{step}
As in Section $\ref{subsec:smoothing_double}$, we have an injective diffeomorphism
$\widetilde{\Phi}_{ij}:\mathcal{V}_{ij}\setminus (N_{ji}\setminus D_{ji})\to W_{ij}\times\Delta$ by 
$\widetilde{\Phi}_{ij}=(\widetilde{\Phi}_{ij}^1,\widetilde{\Phi}_{ij}^2)=(\Phi_{ij}\circ p_{ij},\varpi_{ij})$.
Then we see from $\eqref{eq:t_ij+t_ji}$ that
\begin{equation*}
\widetilde{\Phi}_{ij}^1(\varpi_{ij}^{-1}(\zeta ))=t_{ij}^{-1}(0,2T_\zeta )\quad\text{for }\zeta\in\Delta^{\! *}.
\end{equation*}
Meanwhile $V_{ij}$, which is an irreducible component of $\varpi_{ij}^{-1}(0)=V_{ij}\cup V_{ji}$, is mapped onto $W_{ij}$ by $\Phi_{ij}\circ p_{ij}$.
Now let $W_{ij}^T, X_i^T$ be as defined in $\eqref{eq:W-X^T}$, and $\mathcal{W}_{ij}=\smallcup_{\zeta\in\Delta}W_{ij}^{T_\zeta +1}\times\{\zeta\}$, 
$\mathcal{X}_i=\smallcup_{\zeta\in\Delta}X_i^{T_\zeta +1}\times\{\zeta\}$ be as defined in $\eqref{eq:XX-WW}$.
Then we see that
\begin{equation}\label{eq:image_Phi_ij_triple}
\Image\widetilde{\Phi}_{ij}\supset\mathcal{W}_{ij},
\end{equation}
and under the identification of $\mathcal{V}_{ij}$ and $\mathcal{V}_{ji}$, the map
$\widetilde{\Phi}_{ij}^{-1}\cup\widetilde{\Phi}_{ji}^{-1}:\mathcal{W}_{ij}\cup\mathcal{W}_{ji}\to\mathcal{V}_{ij}$ is surjective.

For $\beta\in\Lambda_{ijk}$ and $\zeta\in\Delta$, we have a diffeomorphism
\begin{equation*}
\xymatrix@R=0pt{
\varpi_{ij,\beta}^{-1}(\zeta )\ar[r]^{\gamma_{ik,ij,\beta}=\Psi_{ik,\beta}\circ\Psi_{ij,\beta}^{-1}}&\varpi_{ik,\beta}^{-1}(\zeta )\\
\din&\din\\
(x_{ij,\beta},y_{ij,\beta},y_{ji,\beta})\ar@{|->}[r]&(x_{ik,\beta},y_{ik,\beta},y_{ki,\beta})\mathrlap{{}=(y_{ij,\beta},x_{ij,\beta},y_{ji,\beta}),}}
\phantom{{}=(y_{ij,\beta},x_{ij,\beta},y_{ji,\beta}),}
\end{equation*}
under which $\widetilde{\Phi}_{ij}^1(x_{ij,\beta},y_{ij,\beta},y_{ji,\beta})$ and $\widetilde{\Phi}_{ik}^1(x_{ik,\beta},y_{ik,\beta},y_{ki,\beta})$
determine the same point in $U_{i,\beta}$ because of $\eqref{eq:relation_xy}$.
Since we have $W_{ij}\cap W_{ik}=\smallcup_{\beta\in\Lambda_{ijk}}U_{i,\beta}$, 
$\widetilde{\Phi}_{ij}$ and $\widetilde{\Phi}_{ik}$ are consistent as injective diffeomorphisms from $\mathcal{V}_{ij}$ and $\mathcal{V}_{ik}$
to $(W_{ij}\cup W_{ik})\times\Delta\subset X_i\times\Delta$.
Thus, $\widetilde{\Phi}_{ij}\cup\widetilde{\Phi}_{ik}:
(\mathcal{V}_{ij}\setminus (N_{ji}\setminus D_{ij}))\cup (\mathcal{V}_{ik}\setminus (N_{ki}\setminus D_{ik}))\to (W_{ij}\cup W_{ik})\times\Delta$
is well-defined as an injective diffeomorphism.
Meanwhile, we see from $\eqref{eq:image_Phi_ij_triple}$ that
$\Image (\widetilde{\Phi}_{ij}\cup\widetilde{\Phi}_{ik})\supset\mathcal{W}_{ij}\cup\mathcal{W}_{ik}$,
so that $(\widetilde{\Phi}_{ij}\cup\widetilde{\Phi}_{ik})^{-1}=\widetilde{\Phi}_{ij}^{-1}\cup\widetilde{\Phi}_{ik}^{-1}:
\mathcal{W}_{ij}\cup\mathcal{W}_{ik}\to
(\mathcal{V}_{ij}\setminus (N_{ji}\setminus D_{ij}))\cup (\mathcal{V}_{ik}\setminus (N_{ki}\setminus D_{ik}))\subset\mathcal{V}_{123}$
is also an injective diffeomorphism which is compatible with the projections to $\Delta$.

Hence, regarding $\mathcal{V}_{ij}(=\mathcal{V}_{ji})$ as an open submanifold of $\mathcal{V}_{123}$ for all pairs $(i,j)$ with $\epsilon_{ij\nu_{ij}}=1$,
we can glue together $\mathcal{X}_1,\mathcal{X}_2,\mathcal{X}_3$, and $\mathcal{V}_{123}$
along $\mathcal{W}_{ij}\cup\mathcal{W}_{ik}\subset\mathcal{X}_{i}$ for all $i$ and $j,k$ with $\epsilon_{ijk}=1$
using the injective diffeomorphisms $(\widetilde{\Phi}_{ij}\cup\widetilde{\Phi}_{ik})^{-1}:\mathcal{W}_{ij}\cup\mathcal{W}_{ik}\to
(\mathcal{V}_{ij}\setminus (N_{ji}\setminus D_{ij}))\cup (\mathcal{V}_{ik}\setminus (N_{ki}\setminus D_{ik}))\subset\mathcal{V}_{123}$,
to obtain the desired family of local smoothings of $X_1\cup X_2\cup X_3$ around $D_{12}\cup D_{23}\cup D_{31}$.
This gluing procedure for $i$ is diagrammed as follows:
\begin{equation}\label{eq:gluing_triple}
\vcenter{\xymatrix@R=0pt@C=0pt{
\mathcal{X}_{i}&&&&&\mathcal{V}_{ij}\ar@{}[r]|-*{\subset}&\mathcal{V}_{123}\ar@{}[r]|-*{\supset}&\mathcal{V}_{ik}\\
\dsubset&&&&&\dsubset&&\dsubset\\
\mathcal{W}_{ij}\cup\mathcal{W}_{ik}\ar@{^{(}->}[rrrrr]^-{\widetilde{\Phi}_{ij}^{-1}\cup\widetilde{\Phi}_{ik}^{-1}}
&&&&&(\mathcal{V}_{ij}\setminus (N_{ji}\setminus D_{ij}))\ar@{}[rr]|-*{\cup}&&(\mathcal{V}_{ik}\setminus (N_{ki}\setminus D_{ik}))\\
\dsubset&&&&&\dsubset&&\dsubset\\
(W_{ij}^{T_\zeta +1}\cup W_{ik}^{T_\zeta +1})\times\{\zeta\}\ar@{^{(}->}[rrrrr]^-{\widetilde{\Phi}_{ij}^{-1}\cup\widetilde{\Phi}_{ik}^{-1}}
&&&&&\varpi_{ij}^{-1}(\zeta )\setminus (V_{ji}\setminus D_{ij})\ar@{}[rr]|-*{\cup}&&\varpi_{ik}^{-1}(\zeta )\setminus (V_{ki}\setminus D_{ik})
}}
\end{equation}
where $\mathcal{V}_{ij}$ is constructed by diagram $\eqref{eq:gluing_double}$ with subscripts $1,2$ replaced with $i,j$ using Steps $1,2$,
and $\mathcal{V}_{123}$ is constructed by diagram $\eqref{eq:gluing_V_123}$.
Also, the last line of $\eqref{eq:gluing_triple}$ for all $i$ yields the fiber $\varpi_{123}^{-1}(\zeta )$ of the local smoothings $\varpi_{123}:\mathcal{V}_{123}\to\Delta$ over $\zeta\in\Delta$.
\end{step}
Note that at this point we have only constructed differential geometric smoothings, and thus each fiber over $\zeta\in\Delta$ 
is only given as a smooth manifold without a complex structure.
In Section $\ref{subsec:existence}$, we shall construct on each fiber over $\zeta\in\Delta$ a complex structure which depends continuously on $\zeta$.

\subsection{Existence of holomorphic volume forms on global smoothings}\label{subsec:existence}
Here we shall prove Theorem $\ref{thm:smoothing}$.
\begin{proof}[Proof of Theorem $\ref{thm:smoothing}$]
Let $\Delta =\Delta_\epsilon =\set{\zeta\in\C|\norm{\zeta}<\epsilon}$ for $\epsilon\leqslant e^{-3}$
and let $T_\zeta$ for $\zeta\in\Delta$ be as in $\eqref{eq:T_zeta}$, so that we have $T_\zeta\in (T_\epsilon ,\infty ]$ with $T_\epsilon\geqslant 3$.
Let $\mathcal{X}_i$ and $\mathcal{V}_{ij},\mathcal{V}_{ijk}$ be as defined in $\eqref{eq:XX-WW}$
and Sections $\ref{subsec:smoothing_double}$ and $\ref{subsec:smoothing_triple}$ respectively.
Then for all pairs $(n_1,n_2)$ with $n_1<n_2$, $\Lambda_{n_1n_2}\neq\emptyset$ and $\Lambda_{n_1n_2}^{(2)}=\emptyset$,
we glue together $\mathcal{X}_{n_1}$, $\mathcal{X}_{n_2}$ and $\mathcal{V}_{n_1n_2}$ according to diagram $\eqref{eq:gluing_double}$,
in which subscripts $1,2$ and $i,j$ are replaced with $n_1,n_2$ and $n_i,n_j$, respectively.
At the same time, for all triples $(n_1,n_2,n_3)$ with $n_1<n_2<n_3$ and $\Lambda_{n_1n_2n_3}\neq\emptyset$,
we glue together $\mathcal{X}_{n_1}$, $\mathcal{X}_{n_2}$, $\mathcal{X}_{n_3}$ and $\mathcal{V}_{n_1n_2n_3}$
according to $\eqref{eq:gluing_triple}$, in which subscripts $i,j,k$ are replaced with $n_i,n_i,n_k$.
As a result, we obtain a family of global smoothings $\varpi :\mathcal{X}\to\Delta$ of $X$,
which satisfies parts (a) and (b) of Theorem \ref{thm:smoothing}.

For each double curve $D_{ij}\subset X_i$, we obtained the Hermitian form $\omega_{ij}$ on $W_{ij}\setminus D_i$
which defines an $\SU (2)$-structure together with $\Omega_i$ and satisfies
$\omega_{ij}=\omega_{ik}$ on $W_{ij}\cap W_{ik}\setminus D_i=\smallcup_{\beta\in\Lambda_{ijk}}U'_{i,\beta}$.
We also obtained the Hermitian metrics $g_{ij}$ on $W_{ij}\setminus D_i$ associated with the $\SU (2)$-structure $(\Omega_i,\omega_{ij})$
such that $g_{ij}=g_{ik}$ on $W_{ij}\cap W_{ik}\setminus D_i$.
Then we have the following two results.
\begin{lemma}
There exists a Hermitian form $\omega_i$ on $X_i\setminus D_i$ such that $(\Omega_i,\omega_i)$ defines an $\SU (2)$-structure on $X_i\setminus D_i$
and we have
\begin{equation*}
\omega_i=\omega_{ij}\quad\text{on }(W_{ij}\setminus D_i)\setminus W_{ij}^1
=\widetilde{t}_{ij}^{-1}[1,\infty )\setminus\smallcup_{k\in I_{ij}}\widetilde{t}_{ik}^{-1}(0,1)\text{ for all }j\in I_i.
\end{equation*}
\end{lemma}
\begin{proof}
We shall follow the argument in \cite{Doi09}, Section $3.3$.
Let $\omega_i^1$ be a Hermitian form on the compact submanifold $\overline{X_i^1}$ of $X_i\setminus D_i$
normalized so that $2\omega_i^1\wedge\omega_i^1=\Omega_i\wedge\overline{\Omega}_i$, and $g_i^1$ be the associated Hermitian metric.
Then gluing together $g_i^1$ and $g_{ij}$ for all $j\in I_i$ using a cut-off function 
which takes $1$ on $\overline{X_i^0}=X_i\setminus\smallcup_{j\in I_i}W_{ij}$ and $0$ outside $X_i^1$,
we have a Hermitian metric $\widehat{g}_i$ on $X_i\setminus D_i$ such that
\begin{equation*}
\widehat{g}_i=\begin{dcases}
g_i^1&\text{on }\overline{X_i^0},\\
g_{ij}&\text{on }\widetilde{t}_{ij}^{-1}[1,\infty )\setminus\smallcup_{k\in I_{ij}}\widetilde{t}_{ik}^{-1}(0,1)\text{ for all }j\in I_i.
\end{dcases}
\end{equation*}
Letting $\widehat{\omega}_i$ be the associated Hermitian form,
we have $2\lambda_i\widehat{\omega}_i\wedge\widehat{\omega}_i=\Omega_i\wedge\overline{\Omega}_i$
for some positive function $\lambda_i$ on $X_i\setminus D_i$ such that $\lambda_i\equiv 1$ on $\overline{X_i^0}$ and outside $X_i^1.$
Then $\omega_i=\lambda_i^{1/2}\widehat{\omega}_i$ gives the desired Hermitian form.
\end{proof}
\begin{lemma}
There exists a smooth complex $1$-form $\xi_{ij}$ on $W_{ij}$ such that
\begin{equation*}
\Omega_i-\Omega_{ij}^\infty =\der\xi_{ij},\quad\text{and }\norm{\nabla^m\xi_{ij}}=O(e^{-t_{ij}/2})
\quad\text{for all }m\geqslant 0.
\end{equation*}
In particular, we have $\xi_{ij}=0$ on $U_{i,\beta}$ for $\beta\in\Lambda_{ij}^{(2)}$.
\end{lemma}
\begin{proof}
The first assertion follows from $\eqref{eq:asymptotic_SU(2)_double}$, $\eqref{eq:asymptotic_SU(2)_triple_1}$, and \cite{Doi09}, Proposition $3.4$.
Then the second assertion follows from $\eqref{eq:asymptotic_SU(2)_triple_2}$.
\end{proof}
Hence for $\zeta\neq 0$, we can define a pair $(\Omega_{i,\zeta},\omega_{i,\zeta})$ of a smooth complex and a real $2$-form on $X_i\setminus D_i$ by
\begin{align*}
\Omega_{i,\zeta}&=\Omega_i-\der\smallsum_{j\in I_i}(1-\rho_{T_\zeta -1}(t_{ij}))\xi_{ij},\\
\omega_{i,\zeta}&=\omega_i+\smallsum_{j\in I_i}(1-\rho_{T_\zeta -1}(t_{ij}))(\omega_{ij}^\infty -\omega_i),
\end{align*}
where $\rho_T(x)=\rho (x-T+1)$ is a translation of the cut-off function $\rho :\R\to [0,1]$ with 
\begin{equation*}
\rho (x)=\begin{dcases}
1&\text{if }x\leqslant 0,\\
0&\text{if }x\geqslant 1,\end{dcases}
\quad\text{so that}\quad\rho_T (x)=\begin{dcases}
1&\text{if }x\leqslant T-1,\\
0&\text{if }x\geqslant T.\end{dcases}
\end{equation*}
Then $\Omega_{i,\zeta}$ is $\der$-closed, and under the decomposition $\eqref{eq:W_ij}$ of $W_{ij}$, we have
\begin{equation}\label{eq:Oomega_i,zeta}
(\Omega_{i,\zeta },\omega_{i,\zeta})=\begin{dcases}
(\Omega_i,\omega_i)&\text{on }\overline{X_i^{T_\zeta -2}},\\
(\Omega_{ij}^\infty ,\omega_{ij}^\infty)&\text{on }\widetilde{t}_{ij}^{-1}[T_\zeta -1,\infty )\cap W_{ij}^{(1)}.\\
(\Omega_{ij}^\infty ,\omega_{ij}^\infty)=(\Omega_{ik}^\infty ,\omega_{ik}^\infty)
&\text{on }(\widetilde{t}_{ij}^{-1}[1,\infty )\cup\widetilde{t}_{ik}^{-1}[1,\infty ))
\cap U_{i,\beta}\text{ for }\beta\in\Lambda_{ijk},
\end{dcases}
\end{equation}
Thus, $(\Omega_{i,\zeta},\omega_{i,\zeta})$ is an $\SU (2)$-structure on $X_i\setminus D_i$ except on $\widetilde{t}_{ij}^{-1}(T_\zeta -2,T_\zeta -1)\cap W_{ij}^{(1)}$.
By $\eqref{eq:asymptotic_SU(2)_double}$ and $\eqref{eq:asymptotic_SU(2)_triple_1}$, we have an estimate
\begin{equation}\label{ineq:est_SU(2)_failure}
\norm{(\Omega_{i,\zeta},\omega_{i,\zeta})-(\Omega_{ij}^\infty ,\omega_{ij}^\infty )}\leqslant C_ie^{-T_\zeta /2}
\quad\text{on }\widetilde{t}_{ij}^{-1}(T_\zeta -2,T_\zeta -1)\cap W_{ij}^{(1)},
\end{equation}
where the norm is measured by the associated metric $g_{ij}^\infty$, and $C_i$ is a constant which is independent of $T_\zeta$.

Now recall that $X_\zeta$ is constructed as a differentiable manifold by the gluing procedures
according to the last lines of diagrams $\eqref{eq:gluing_double}$ and $\eqref{eq:gluing_triple}$ around all double lines and triple points.
Then we see from $\eqref{eq:Oomega_i,zeta}$ that the pairs $(\Omega_{i,\zeta},\omega_{i,\zeta})$ of $2$-forms on $X_i^{T_\zeta +1}$ for all $i$ extend
to a pair $(\widetilde{\Omega}_\zeta ,\widetilde{\omega}_\zeta )$ on all of $X_\zeta$
so that $\widetilde{\Omega}_\zeta$ is $\der$-closed, and $\widetilde{\Omega}_\zeta ,\widetilde{\omega}_\zeta )$ coincides with
the $\SU (2)$-structure $(\Omega_{ij,\zeta},\omega_{ij,\zeta})$ on the image of
\begin{equation*}
\widetilde{t}_{ij}^{-1}(T_\zeta -1,T_\zeta +1)\cup\smallcup_{k\in I_{ij}}\widetilde{t}_{ij}^{-1}[1,T_\zeta +1)\cap\widetilde{t}_{ik}^{-1}[1,T_\zeta +1)
\end{equation*}
under $\widetilde{\Phi}_{ij}^{-1}$ in $\omega_{ij}^{-1}(\zeta )$.
Now set $T_{\rho_*}=2\log (\max_i\{ C_i\}/\rho_*)$ and assume $T_\zeta >T_{\rho_*}$ hereafter.
Then we have $C_ie^{-T_\zeta /2}<\rho_*$ for all $i$,
and thus by $\eqref{ineq:est_SU(2)_failure}$, Lemma $\ref{lem:rho*}$, and Definition $\ref{def:Theta}$,
we can define an $\SU (2)$-structure $(\psi_\zeta ,\kappa_\zeta )=\Theta (\widetilde{\Omega}_\zeta ,\widetilde{\omega}_\zeta )$ on $X_\zeta$.
Let $\phi_\zeta =\widetilde{\Omega}_\zeta -\psi_\zeta$, so that we have $\der\psi_\zeta +\der\phi_\zeta =0$.
\begin{lemma}\label{lem:est_phi-kappa}
We have estimates
\begin{equation*}
\Norm{\phi_\zeta}_{L^p}\leqslant Ce^{-T_\zeta /2},\quad\Norm{\der\phi_\zeta}_{L^p}\leqslant Ce^{-T_\zeta /2},
\quad\text{and }\Norm{\der\kappa_\zeta}_{C^0}\leqslant C
\end{equation*}
for some positive constants $C$ which are independent of $T_\zeta$, where the norms are measured by the metric $g_\zeta$ on $X_\zeta$
associated with the $\SU (2)$-structure $(\psi_\zeta ,\kappa_\zeta )$.
\end{lemma}
\begin{proof}
See \cite{Doi09}, Proposition $3.6$.
\end{proof}
Under a diffeomorphism $\mathcal{X}\setminus X\simeq M\times\Delta^{\! *}$,
the family $\set{(\psi_\zeta, \kappa_\zeta )|\zeta\in\Delta^{\! *}}$ of $\SU(2)$-structures on $M$ is smooth with respect to
both $p\in M$ and $\zeta\in\Delta^{\! *}$.
Since $(M,g_\zeta )$ has a linear volume growth in $T_\zeta$, we see from Lemma $\ref{lem:est_phi-kappa}$ that
\begin{equation*}
\Norm{\der\kappa_\zeta}_{L^p}\leqslant C T_\zeta^{1/p},
\end{equation*}
where $g_\zeta$ is the metric associated with $(\psi_\zeta, \kappa_\zeta )$.
To distinguish $\epsilon$ in Theorem $\ref{thm:existence}$ from that used for the radius of $\Delta =\Delta_\epsilon$,
we denote the former by $\epsilon'$, while the latter remains the same.
Let $\epsilon'=e^{-\gamma T_\zeta}$ for $0<\gamma\leqslant 1/6$ and $T_*(\rho )=\max\{-(\log\epsilon_*)/\gamma ,T_{\rho_*},3\}$,
where $\epsilon_*=\epsilon_*(\rho )$ is used in Theorem $\ref{thm:existence}$.
If $T_\zeta >T_*(\rho )$, so that $\epsilon'<\epsilon_*(\rho )$,
then by Theorem $\ref{thm:existence}$ there exists a unique $\eta_\zeta$ with $\Norm{\eta_\zeta}_{C^0}\leqslant\rho$ for each $\zeta\in\Delta^{\! *}$
such that $\Omega_\zeta =\Theta_1(\psi_\zeta +\eta_\zeta ,\kappa_\zeta)$ is a $\der$-closed $\SL (2,\C )$-structure on $X_\zeta$.
Thus letting $\epsilon =\log T_*(\rho )$ for some $\rho <\rho_*$, we have $T_\zeta >T_*(\rho )$ for all $\zeta\in\Delta$, and part (c) of Theorem $\ref{thm:smoothing}$ holds.

Since one can see that as $\zeta'\to\zeta\in\Delta^{\! *}$, $\eta_{\zeta'}$ converges to $\eta_\zeta$ in $L^8_1(\wedge^2_-T^*M,g_\zeta )\hookrightarrow C^{0,1/2}(\wedge^2_-T^*M,g_\zeta )$,
the resulting family $\set{\Omega_\zeta|\zeta\in\Delta^{\! *}}$ of $\der$-closed $\SL (2,\C )$-structures on $M$ is \emph{continuous} with respect to $\zeta$.
Also, for $T_\zeta >T_*(\rho )$ we have an estimate on $X_i^{T_\zeta +1}\subset X_\zeta =\varpi^{-1}(\zeta )$ as
\begin{equation}\label{ineq:Omega_zeta-Omega_i}
\begin{aligned}
\Norm{\Omega_\zeta -\Omega_i}_{C^0_i}&\leqslant
\Norm{\Omega_\zeta -\psi_\zeta}_{C^0_i}+\Norm{\psi_\zeta -\widetilde{\Omega}_i}_{C^0_i}+\Norm{\widetilde{\Omega}_\zeta -\Omega_i}_{C^0_i}\\
&<(1+C')\left(\Norm{\eta_\zeta}_{C^0_\zeta}+\Norm{\psi_\zeta -\widetilde{\Omega}_i}_{C^0_\zeta}+\Norm{\widetilde{\Omega}_\zeta -\Omega_i}_{C^0_\zeta}\right)\\
&<(1+C')(\rho +\rho+C''e^{-T_\zeta /2})\quad\text{for all }i,
\end{aligned}
\end{equation}
for some positive constants $C'$ and $C''$ independent of $\rho$, where the $C^0$-norms $\Norm{\cdot}_{C^0_i}$ and $\Norm{\cdot}_{C^0_\zeta}$ are measured on $X_i^{T_\zeta +1}$
by the metrics $g_i$ and $g_\zeta$ associated with the $\SU (2)$-structures $(\Omega_i,\omega_i)$ and $(\psi_\zeta ,\kappa_\zeta )$, respectively.
Then redefining $T_*(\rho )$ so that $C''e^{-T_*(\rho )/2}\leqslant\rho$ in $\eqref{ineq:Omega_zeta-Omega_i}$, we have
\begin{equation*}
\Norm{\Omega_\zeta -\Omega_i}_{C^0_i}<3(1+C')\rho\quad\text{on }X_i^{T_\zeta +1}\text{ for all }i\text{ and }\zeta\in\Delta^{\! *}\text{ with }T_\zeta >T_*(\rho ),
\end{equation*}
which implies that the complex structure $I_\zeta$ on $X_\zeta$
induced by the $\SL (2,\C)$-structure $\Omega_\zeta$
converges uniformly as $\zeta\to 0$ to the original complex structure on the central fiber $X=\varpi^{-1}(0)$
outside the singular locus $D=\smallcup_iD_i$.
Hence, the continuity in part (d) is proved.
This completes the proof of Theorem $\ref{thm:smoothing}$.
\end{proof}

\subsection{Examples of $d$-semistable SNC complex surfaces with trivial canonical bundle without triple points}\label{subsec:ex_SNC_double}
Here we shall give some examples of $d$-semistable SNC complex surfaces with trivial canonical bundle without triple points,
which are smoothable to complex tori, primary Kodaira surfaces, or $K3$ surfaces due to Theorem $\ref{thm:smoothing}$.
Our examples are based on those given in \cite{Doi09}, Examples $5.1$ and $5.3$.
It is worth mentioning that although the classical smoothability result of Friedman cannot be applied to the SNC complex surfaces $X$ given in Example $\ref{ex:tori_Kodaira}$ 
because we have $H^1(X_i,\mathcal{O}_{X_i})\neq 0$ for some irreducible component $X_i$ of $X$, 
the modern techniques for smoothings (\cite{FFR19}, Theorem $1.1$ and \cite{CLM19}, Corollary $5.15$) are applicable.
A typical example to see this generalization is given as follows.
Let $X=X_1\cup X_2$ be a $3$-dimensional SNC complex manifold such that $X_1$ and $X_2$ are two copies of $\C P^3$, and $D=X_1\cap X_2$ is a quartic surface.
Then $X$ is not $d$-semistable, but $\mathcal{T}_X^1\cong N_{D/X_1}\otimes N_{D/X_2}$ is generated by global sections (see \cite{FFR19}, Example $1.3$).
Consequently, $X$ is smoothable to Calabi-Yau threefolds due to \cite{FFR19}, Theorem $1.1$.
A similar argument works for SNC complex surfaces, and so with Examples $\ref{ex:K3_double}$--$\ref{ex:K3_typeII}$.
Thus, it seems that some of the examples given here are already known to the experts.
However, it is still valuable to list these examples in the rest of this article 
because they are helpful to illustrate properties and technical features of conditions (i)--(iii) in Theorem $\ref{thm:smoothing}$.
Particularly, Example $\ref{ex:Fujita}$ provides a nice example to see that condition (ii) in Theorem $\ref{thm:smoothing}$
is only a necessary condition for the canonical bundle of an SNC complex surface to be trivial.

\begin{example}\label{ex:K3_double}
For $d\in \{ 0,\pm 1,\pm 2,\pm 3\}$, let $C$ and $C_d$ be smooth curves of degree $3$ and $3-d$ in $\C P^2$, respectively,
such that $C_d$ intersects $C$ transversely, which we regard as trivially satisfied for $C_3=\emptyset$.
Then $C$ is an anticanonical divisor on $\C P^2$.
Let $\pi_d:X_C(d)\to\C P^2$ be the blow-up of $\C P^2$ at the points $C\cap C_d$, and $D$ be the proper transform of $C$ in $X_C(d)$,
so $\pi_d$ maps $D$ isomorphically to $C$.
Also, let $E_d$ be an exceptional divisor $\pi_d^{-1}(C\cap C_d)$.
Then since we have
\begin{equation}\label{eq:bl-up_div}
K_{X_C(d)}=\pi_d^*K_{\C P^2}\otimes [E_d]\quad\text{and}\quad [D]=\pi_d^*[C]\otimes [E_d]^{-1}
\end{equation}
(see, e.g., \cite{GH94}, p. $608$), we calculate the canonical bundle $K_{X_C(d)}$ of $X_C(d)$ as
\begin{equation}\label{eq:antican_K3}
K_{X_C(d)}=\pi_d^*K_{\C P^2}\otimes (\pi_d^*[C]\otimes [D]^{-1})=\pi_d^*(K_{\C P^2}\otimes [C])\otimes [D]^{-1}\cong [D]^{-1},
\end{equation}
so that $D$ is an anticanonical divisor on $X_C(d)$.

Meanwhile, using the adjunction formula and $\eqref{eq:bl-up_div}$, we have
\begin{equation}\label{eq:bl-up_normal}
N_{D/X_C(d)}\cong\restrict{[D]}{D}=\restrict{\pi_d^*[C]}{D}\otimes\restrict{[E_d]^{-1}}{D}.
\end{equation}
Since $\restrict{\pi_d}{D}:D\to C$ is an isomorphism and $C_d$ intersects $C$ transversely, we calculate $\restrict{[E_d]}{D}$ as
\begin{equation}\label{eq:div_E}
\restrict{[E_d]}{D}=[E_d\cap D]_D=\pi_d^*([C\cap C_d]_C)=\pi_d^*(\restrict{[C_d]}{C}),
\end{equation}
where $\left[ E_d\cap D\right]_D$ and $\left[ C\cap C_d\right]_C$ mean that we consider these as divisors on $D$ and $C$, respectively.
Similarly, letting $C'$ be a cubic curve which intersects $C$ transversely, so that $C'$ is linearly equivalent to $C$, we find that
\begin{equation}\label{eq:pullback_div_C}
\restrict{(\pi_d^*[C])}{D}\cong\restrict{(\pi_d^*[C'])}{D}=\pi_d^*(\restrict{[C']}{C}).
\end{equation}
Thus, putting $\eqref{eq:div_E}$ and $\eqref{eq:pullback_div_C}$ into $\eqref{eq:bl-up_normal}$ gives that
\begin{equation}\label{eq:normal_K3}
\begin{aligned}
N_{D/X_C(d)}\cong&\restrict{[D]}{D}\cong\restrict{(\pi_d^*[C']\otimes [E_d]^{-1})}{D}=\pi_d^*\left(\restrict{([C']\otimes [C_d]^{-1})}{C}\right)\\
\cong&\restrict{([C']\otimes [C_d]^{-1})}{C}\cong\mathcal{O}_C(3)\otimes\mathcal{O}_C(d-3)=\mathcal{O}_C(d).
\end{aligned}
\end{equation}

Now for the above cubic curve $C$ and $d=0,1,2,3$, let $X_1=X_C(-d),X_2=X_C(d)$
and $D_1,D_2$ be the corresponding anticanonical divisors isomorphic to $C$.
Consider an SNC complex surface $X=X_1\cup X_2$ obtained by the gluing isomorphism $D_1\xrightarrow{\pi_{-d}}C\xrightarrow{\pi_d^{-1}}D_2$
and local embeddings into $\C^3$.
Then $\eqref{eq:normal_K3}$ and $\eqref{eq:antican_K3}$ lead to conditions (i) and (ii) of Theorem $\ref{thm:smoothing}$, respectively, while (iii) is obvious.
Thus by Theorem \ref{thm:smoothing}, we obtain a family $\varpi :\mathcal{X}\to\Delta$ of global smoothings of $X$ with $\varpi^{-1}(0)=X$.
Calculating the Euler characteristics of $X_1,X_2$, and $D$ as
\begin{gather*}
\chi (X_1)=\chi (\C P^2)+\# (C\cap C_{-d})\cdot (\chi (\C P^1)-\chi (\text{point}))=12+3d,\\
\chi (X_2)=12-3d\quad\text{and}\quad\chi (D)=0,
\end{gather*}
we see that the Euler characteristic of the general fiber $X_\zeta =\varpi^{-1}(\zeta )$ of $\varpi$ over $\zeta\in\Delta^{\! *}$ is given by
\begin{equation*}
\chi (X_\zeta )=\chi (X)=\chi (X_1)+\chi (X_2)-\chi (D)=24,
\end{equation*}
where we used the invariance of homology under continuous deformations.
Hence, we see from the Enriques-Kodaira classification of compact complex surfaces with trivial canonical bundle \cite{BHPV}
that $X_\zeta$ is a $K3$ surface.
\end{example}

\begin{example}\label{ex:tori_Kodaira}
For $d\in\Z$, let $Y_{\C P^2}(d)=\mathbb{P}(\mathcal{O}_{\C P^2}\oplus\mathcal{O}_{\C P^2}(d))$ be a $\C P^1$-bundle over $\C P^2$,
and $D_{\C P^2,0}=\{ ([\bm{z}],[1,0])\}$ and $D_{\C P^2,\infty}=\{ ([\bm{z}],[0,1])\}$ be the zero and the infinity section of $Y_{\C P^2}(d)$, respectively,
where $\bm{z}=(z^0,z^1,z^2)\in\C^3\setminus (0,0,0)$.
Then we have
\begin{align*}
Y_{\C P^2}(d)\setminus D_{\C P^2,0}&=\set{([\bm{z}],[\xi_\infty ,1])\in Y_{\C P^2}(d)}\cong\mathcal{O}_{\C P^2}(-d),\\
Y_{\C P^2}(d)\setminus D_{\C P^2,\infty}&=\set{([\bm{z}],[1,\xi_0])\in Y_{\C P^2}(d)}\cong\mathcal{O}_{\C P^2}(d),\quad\text{where}\\
\xi_0\xi_\infty&=1\quad\text{on }Y_{\C P^2}(d)\setminus (D_{\C P^2,0}\cup D_{\C P^2,\infty}),
\end{align*}
under which isomorphisms we have
\begin{align*}
D_{\C P^2,0}&\cong\set{([\bm{z}],\xi_0)\in\mathcal{O}_{\C P^2}(d)|\xi_0=0}\cong\C P^2,\\
D_{\C P^2,\infty}&\cong\set{([\bm{z}],\xi_\infty )\in\mathcal{O}_{\C P^2}(-d)|\xi_\infty =0}\cong\C P^2.
\end{align*}
Thus by the adjunction formula, we can calculate $N_{D_{\C P^2,0}/Y_{\C P^2}(d)}$ and $N_{D_{\C P^2,\infty}/Y_{\C P^2}(d)}$ as
\begin{align*}
N_{D_{\C P^2,0}/Y_{\C P^2}(d)}&\cong\restrict{[D_{\C P^2,0}]}{D_{\C P^2,0}}\cong\mathcal{O}_{\C P^2}(d)\quad\text{and}\\
N_{D_{\C P^2,\infty}/Y_{\C P^2}(d)}&\cong\restrict{[D_{\C P^2,\infty}]}{D_{\C P^2,\infty}}\cong\mathcal{O}_{\C P^2}(-d)
\end{align*}
by considering the transition functions.

Also, for a cubic curve $C$ in $\C P^2$, let $Y_C(d)=\restrict{Y_{\C P^2}(d)}{C}=\mathbb{P}(\mathcal{O}_C\oplus\mathcal{O}_C(d))$,
and let $D_0=D_{C,0}$ and $D_\infty =D_{C,\infty}$ be the zero and the infinity section of $Y_C(d)$, respectively.
Then in the same way as above, we calculate $N_{D_0/Y_C(d)}$ and $N_{D_\infty /Y_C(d)}$ as
\begin{equation}\label{eq:d-ss_Y_C}
N_{D_0/Y_C(d)}\cong\mathcal{O}_C(d)\quad\text{and}\quad N_{D_\infty/Y_C(d)}\cong\mathcal{O}_C(-d).
\end{equation}

Now consider a local coordinate system $\{ U_i,\bm{\zeta}_i\}$ on $\C P^2$ given by
\begin{gather*}
U_i=\set{[\bm{z}]\in\C P^2|z^i\neq 0}\quad\text{and}\quad\bm{\zeta}_i=(\zeta_i^0,\zeta_i^1,\zeta_i^2)\in\C^2\times\{ 1\} ,\quad\text{where we set}\\
\zeta_i^j=z^j/z^i,\quad\text{so that}\quad\zeta_i^i=1.
\end{gather*}
Also, let $(\bm{\zeta}_i,\xi_{0,i})$ and $(\bm{\zeta}_i,\xi_{\infty ,i})$ be coordinates which trivialize
$Y_{\C P^2}(d)\setminus D_{\C P^2,\infty}\cong\mathcal{O}_{\C P^2}(d)$ and $Y_{\C P^2}(d)\setminus D_{\C P^2,0}\cong\mathcal{O}_{\C P^2}(-d)$
over $U_i$, respectively, so that we have
\begin{equation*}
\xi_{0,i}=(\zeta_j^i)^d\xi_{0,j},\quad\xi_{\infty ,i}=(\zeta_j^i)^{-d}\xi_{\infty ,j},\quad\text{and}\quad\xi_{0,i}\xi_{\infty ,i}=1.
\end{equation*}
Letting $\psi_C$ be a holomorphic volume form on $C$, we can consistently define a meromorphic volume form $\Omega$ on $Y_C(d)$ 
with a single pole along $D_0\cup D_\infty$ by
\begin{equation}\label{eq:Omega_Y_C}
\Omega =\begin{dcases}
-\psi_C\wedge\restrict{\frac{\der\xi_{0,i}}{\xi_{0,i}}}{C}
&\text{on }\restrict{(Y_C(d)\setminus D_\infty )}{C\cap U_i}\cong\restrict{\mathcal{O}_C(d)}{C\cap U_i}\\
\psi_C\wedge\restrict{\frac{\der\xi_{\infty ,i}}{\xi_{\infty ,i}}}{C}
&\text{on }\restrict{(Y_C(d)\setminus D_0)}{C\cap U_i}\cong\restrict{\mathcal{O}_C(-d)}{C\cap U_i}.
\end{dcases}
\end{equation}
Thus, we see that $\Omega$ gives a trivialization
\begin{equation}\label{eq:antican_Y_C}
K_{Y_C(d)}\otimes [D_0]\otimes [D_\infty ]\cong\mathcal{O}_{Y_C(d)},
\end{equation}
so that $Y_C(d)$ has an anticanonical divisor $D=D_0+D_\infty$.
Also, $\Omega$ satisfies the relation
\begin{equation}\label{eq:res_Y_C}
\residue_{D_0}{\Omega}=\psi_C=-\residue_{D_\infty}{\Omega}.
\end{equation}

For $N\in\N$ and $i=1,\dots ,N$, let $X_i$ be a copy of $Y_C(d)$ with an anticanonical divisor $D_i=D_{i,0}+D_{i,\infty}$.
We construct an SNC complex surface $X=\smallcup_{i=1}^NX_i$ by gluing together $D_{i,\infty}$ and $D_{i+1,0}$ for all $i=1,\dots ,N$ using the isomorphisms
\begin{equation*}
D_{i,\infty}\to D_{i+1,0},\quad ([\bm{z}_i],[1,0])\mapsto ([\bm{z}_{i+1}],[0,1]),\quad\text{where we set }D_{N+1,0}=D_{1,0},
\end{equation*}
together with local embeddings into $\C^3$.
Then we see that $\eqref{eq:d-ss_Y_C}$, $\eqref{eq:antican_Y_C}$ and $\eqref{eq:res_Y_C}$ give
conditions (i), (ii) and (iii) of Theorem $\ref{thm:smoothing}$, respectively.
Thus by Theorem \ref{thm:smoothing}, we obtain a family $\varpi :\mathcal{X}\to\Delta$ of global smoothings of $X$ with $\varpi^{-1}(0)=X$.
One can show that the general fiber $X_\zeta =\varpi^{-1}(\zeta )$ for $\zeta\in\Delta^{\! *}$ is topologically $S_d\times S^1$, 
where $S_d$ is the $\U(1)$-bundle associated with the complex line bundle $\mathcal{O}_C(d)$, which has Betti numbers
\begin{equation*}
b_1(S_d)=b_2(S_d)=\begin{dcases}
3&\text{for }d=0,\\
2&\text{for }d\neq 0.
\end{dcases}
\end{equation*}
Consequently, the general fiber $X_\zeta$ has Betti numbers
\begin{equation*}
(b_1(X_\zeta ),b_2(X_\zeta ))=\begin{dcases}
(4,6)&\text{for }d=0,\\
(3,4)&\text{for }d\neq 0.
\end{dcases}
\end{equation*}
Hence by the classification of compact complex surfaces with trivial canonical bundle \cite{BHPV},
we see that the general fiber $X_\zeta$ is a complex torus for $d=0$ and a primary Kodaira surface for $d\neq 0$.
In particular, the central fiber $X$ for $d\neq 0$ cannot be K\"{a}hlerian because $b^1(X)=b^1(X_\zeta )=3$ is odd.
We remark that we can construct $Y_C(d)$ from a $\C P^1$-bundle $Y_{\C P^n}$ over $\C P^n$ of any complex dimension $n\geqslant 2$
and an elliptic curve $C$ embedded in $\C P^n$.
See also Example $\ref{ex:Fujita}$, in which we take $N=2$ and use a gluing map $D_{2,\infty}\to D_{1,0}$ not being an identity isomorphism.
\end{example}

\begin{example}\label{ex:K3_typeII}
Let $C$ be a cubic curve in $\C P^2$, and let $X_C(d)$ and $Y_C(d)$ be as in Examples $\ref{ex:K3_double}$ and $\ref{ex:tori_Kodaira}$, respectively.
For $N\geqslant 2$ and $d=0,1,2,3$, let $X_1=X_C(-d),X_N=X_C(d)$, and $X_2,\dots ,X_{N-1}$ be copies of $Y_C(d)$.
Let us denote by $D_i$ the anticanonical divisor on $X_i$ constructed in the above examples, where $D_i=D_{i,0}+D_{i,\infty}$ for $i=2,\dots ,N-1$.
Then we obtain an SNC complex surface $X=\smallcup_{i=1}^NX_i$
using gluing isomorphisms $D_1\to D_{2,0}$, $D_{i,\infty}\to D_{i+1,0}$ for $i=2,\dots ,N-1$, and $D_{N-1,\infty}\to D_N$.
In almost the same way as in Examples $\ref{ex:K3_double}$ and $\ref{ex:tori_Kodaira}$, we can apply Theorem $\ref{thm:smoothing}$
to obtain a family $\varpi :\mathcal{X}\to\Delta$ of global smoothings of $X$ with $\varpi^{-1}(0)=X$.
Since we have $\chi (X_\zeta )=\chi (X)=\chi (X_1)+\chi (X_N)=24$ as in Example $\ref{ex:K3_double}$ using $\chi (Y_C(d))=0$ and $\chi (C)=0$,
$X$ is smoothable to $K3$ surfaces.
Note that this example gives an explicit construction of $d$-semistable $K3$ surfaces of Type \rom{2} with any number $N$ of irreducible components.
\end{example}

Here we shall give some examples of SNC complex surfaces for which condition (ii) holds but the canonical bundle is \emph{not} trivial.
The following example for the case of $d=0$ and $k=1$ is due to K. Fujita \cite{Fuj21}, which we extend to all integers $d$ and $k=1,2,3$.
The proof given here is more explicit and elementary than the original one.
\begin{example}\label{ex:Fujita}
Let $C$ be an elliptic curve embedded in $\C P^n$ for some $n$, so $K_C$ is trivial.
We take $C$ so that $C$ is isomorphic to $\C /\Lambda$ with the standard coordinate $z$, 
where $\Lambda$ is the lattice in $\C$ generated by $1$ and $\I$.
We will identify $C$ with $\C /\Lambda$ below and use $z\in\C /\Lambda$ as a coordinate on $C$.
For $d\in\Z$, let $Y_C(d)$ be a $\C P^1$-bundle over $C$ obtained as in Example $\ref{ex:tori_Kodaira}$.
Then $Y_C(d)$ has an anticanonical divisor $D=D_0+D_\infty$ with $D_0,D_\infty\cong C$,
and a meromorphic volume form $\Omega$ given by $\eqref{eq:Omega_Y_C}$ with a single pole along $D_0$ and $D_\infty$
which trivializes $K_{Y_C(d)}\otimes [D_0]\otimes [D_\infty ]$, where we set $\psi_C=\der z$.

Now let $X_1$ and $X_2$ be two copies of $Y_C(d)$, and define an SNC complex surface $X=X_1\cup X_2$
using gluing isomorphisms $\tau_1:D_{1,\infty}\to D_{2,0}$ and $\tau_2:D_{2,\infty}\to D_{1,0}$,
together with local embeddings into $\C^3$, where $\tau_1$ and $\tau_2$ are given by
\begin{equation}\label{eq:tau}
\tau_1:z\mapsto z\quad\text{and}\quad\tau_2:z\mapsto (\I )^k z\quad\text{for }k=0,1,2,3.
\end{equation}
Note that if $k=0$ in $\eqref{eq:tau}$, the resulting SNC complex surface $X$ is the same as that obtained in Example $\ref{ex:tori_Kodaira}$.

As we saw in Example $\ref{ex:tori_Kodaira}$, $X$ satisfies condition (ii) of Theorem $\ref{thm:smoothing}$.
However, the following result shows that the canonical bundle $K_X$ of $X$ is \emph{not} trivial for $k=1,2,3$, while $K_X$ is trivial for $k=0$ as desired.
\begin{claim}
We have $H^0(X,K_X)\cong\C$ for $k=0$ and $H^0(X,K_X)=0$ for $k=1,2,3$.
\end{claim}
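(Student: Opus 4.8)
The plan is to read off $H^0(X,K_X)$ directly from Proposition~\ref{prop:gl.sect_K_X}, which presents it as the kernel of the residue-matching map $\rho$, and then to compute that $\rho$ explicitly as a $2\times 2$ matrix. First I would identify the source and target. Since $N=2$, and since \eqref{eq:antican_Y_C} shows each $L_i=K_{X_i}\otimes[D_i]$ is trivial with the restriction of the meromorphic volume form $\Omega$ of \eqref{eq:Omega_Y_C} as a generator, we have $H^0(X_i,L_i)\cong\C$, so the source of $\rho$ is $\C^2$; I write a general element as $(a_1\Omega,a_2\Omega)$. The singular locus $X_1\cap X_2$ splits into two irreducible components, say $D^{(1)}$ (the image of $D_{1,\infty}=D_{2,0}$ glued by $\tau_1$) and $D^{(2)}$ (the image of $D_{1,0}=D_{2,\infty}$ glued by $\tau_2$), each isomorphic to the elliptic curve $C$. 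As there are no triple points, $L_{12}$ restricts to $K_{D^{(a)}}\cong\mathcal{O}_C$ on each component, so $H^0(D^{(a)},L_{12})\cong H^0(C,K_C)\cong\C$ and the target is likewise $\C^2$. Thus the claim reduces to deciding when the resulting $2\times 2$ matrix is singular.

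Next I would compute the two components $\rho^{(1)},\rho^{(2)}$ of $\rho$ from the residue relation \eqref{eq:res_Y_C} and the gluing maps \eqref{eq:tau}. On each $X_i$ the form $\Omega$ has residue $+\psi_C=\der z$ along the zero section $D_{i,0}$ and $-\psi_C=-\der z$ along the infinity section $D_{i,\infty}$. For $D^{(1)}$ the relevant isomorphism is $f_{12}=\tau_1:z\mapsto z$, which pulls $\der z$ back to $\der z$, giving $\rho^{(1)}(a_1,a_2)=(-a_1+a_2)\,\der z$. For $D^{(2)}$ the convention $i\leqslant j$ forces the gluing to run from $X_1$ to $X_2$, so $f_{12}=\tau_2^{-1}:z\mapsto(\I)^{-k}z$, whence $(\tau_2^{-1})^*(\der z)=(\I)^{-k}\der z$ and $\rho^{(2)}(a_1,a_2)=\bigl(a_1-(\I)^{-k}a_2\bigr)\,\der z$. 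In the basis $\der z$ the map is therefore $(a_1,a_2)\mapsto\bigl(a_2-a_1,\ a_1-(\I)^{-k}a_2\bigr)$, represented by $\left(\begin{smallmatrix}-1&1\\ 1&-(\I)^{-k}\end{smallmatrix}\right)$ with determinant $(\I)^{-k}-1$.

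The final step is the linear-algebra conclusion: this determinant vanishes precisely when $(\I)^k=1$, i.e. $k\equiv 0\pmod 4$. For $k=0$ the kernel is one-dimensional, recovering $H^0(X,K_X)\cong\C$ and agreeing with Example~\ref{ex:tori_Kodaira}; for $k=1,2,3$ the map is an isomorphism, so $H^0(X,K_X)=0$, which is the assertion.

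I expect the only genuinely delicate point to be the bookkeeping of orientations and signs: fixing the direction of each gluing isomorphism consistently with the convention $i\leqslant j$ of Proposition~\ref{prop:gl.sect_K_X} (so that $D^{(2)}$ uses $\tau_2^{-1}$ rather than $\tau_2$), and combining the minus sign of the residue along the infinity sections from \eqref{eq:res_Y_C} with the twist $(\tau_2^{-1})^*(\der z)=(\I)^{-k}\der z$. These are exactly the inputs that make the lattice $\Lambda=\langle 1,\I\rangle$ essential: it is the order-$4$ symmetry $z\mapsto\I z$ of the square torus that realizes $\tau_2$ as a genuine automorphism of $C$ acting nontrivially on $H^0(C,K_C)$ when $k\neq 0$, and this is what destroys the global section for $k=1,2,3$.
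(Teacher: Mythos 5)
Your proposal is correct and follows essentially the same route as the paper: both apply Proposition~\ref{prop:gl.sect_K_X} to identify $H^0(X,K_X)$ with the kernel of the residue-matching map $\rho:\C^2\to\C^2$, compute the entries from $\eqref{eq:res_Y_C}$ and $\eqref{eq:tau}$, and conclude by linear algebra. The only (harmless) difference is that the paper's $\rho_2$ pulls back via $\tau_2$ with target $H^0(D_{2,\infty},L_{2,\infty})$, whereas you use $f_{12}=\tau_2^{-1}$ to respect the $i\leqslant j$ convention; the two maps differ by an isomorphism of the target, so they have the same kernel and yield the identical conclusion.
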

\begin{proof}
Applying Proposition $\ref{prop:gl.sect_K_X}$ to our example, we see that $H^0(X,K_X)$ is given by the kernel of the linear map
\begin{equation*}
\rho =\rho_1\oplus\rho_2:H^0(X_1,L_1)\oplus H^0(X_2,L_2)\to H^0(D_{1,\infty},L_{1,\infty})\oplus H^0(D_{2,\infty},L_{2,\infty}),
\end{equation*}
where $\rho_i$ is given by
\begin{equation*}
\vcenter{\xymatrix@R=0pt{
\quad\mathllap{\rho_i:}H^0(X_1,L_1)\oplus H^0(X_2,L_2)\ar[r]&H^0(D_{i,\infty},L_{i,\infty})\\
\quad\din&\din\\
\quad (s_1,s_2)\ar@{|->}[r]&\displaystyle\residue_{D_{i,\infty}}s_i+\tau_i^*\left(\residue_{D_{i',0}}s_{i'}\right)}}
\quad\text{for }i=1,2\text{ and }i'=3-i.
\end{equation*}
Noting $H^0(X_i,L_i)=\C\,\Omega ,H^0(D_{i,p},L_{i,p})=\C\,\der z$, and $\tau_i^*\der z=\der (\tau_i(z))$ for $i=1,2$ and $p=0,\infty$,
and using $\eqref{eq:res_Y_C},\eqref{eq:tau}$, we compute as
\begin{equation*}
\rho (c_1\Omega ,c_2\Omega )=\left( -c_1+c_2,(\I)^kc_1-c_2\right)\der z.
\end{equation*}
Hence, we find that
\begin{equation*}
H^0(X,K_X)\cong\Ker\rho =\begin{dcases}
\set{(c_1\Omega ,c_2\Omega )|c_1=c_2}\cong\C&\text{if }k=0,\\
\set{(c_1\Omega ,c_2\Omega )|c_1=c_2=0}=0&\text{if }k=1,2,3
\end{dcases}
\end{equation*}
as desired.
This completes the proof.
\end{proof}
We can modify the above example as follows.
For a modification, we can take any number $N\in\N$ of components $X_i=Y_C(d)$ and take $\tau_i:D_{i,\infty}\to D_{i+1,0}$
as $z\mapsto (\I )^{k_i}z$ with $k_i=0,1,2,3$ for $i=1,\dots ,N$ as in Example $\ref{ex:tori_Kodaira}$.
Then one can see that $H^0(X,K_X)\cong\C$ if $\sum_ik_i\equiv 0\mod 4$ and $H^0(X,K_X)=0$ otherwise.
For a further modification, we can take the lattice $\Lambda$ as general, which still satisfies $\Lambda =-\Lambda$.
In this case, we can take $\tau_i$ as $z\mapsto (-1)^{k_i}z$ with $k_i=0,1$.
Then similarly, one finds that $H^0(X,K_X)\cong\C$ if $\sum_ik_i$ is even, and $H^0(X,K_X)=0$ if $\sum_ik_i$ is odd.
\end{example}

\section{Examples of $d$-semistable $K3$ surfaces of Type \rom{3}}\label{sec:TypeIIIK3}
In this section, we provide several examples of $d$-semistable SNC complex surfaces $X$ with triple points
to which we can apply Theorem $\ref{thm:smoothing}$.
Furthermore, we show that all of our examples are $d$-semistable $K3$ surfaces of Type \rom{3}
by computing the Euler characteristic of the general fiber $X_\zeta$ of the resulting smoothings,
and using the result of the Enriques-Kodaira classification as well.

Let us recall Example $\ref{ex:tetra_in_CP3}$, where we considered an SNC complex surface $X$ in $\C P^3$
with four hyperplanes $X_0,\dots ,X_3$ as irreducible components.
We saw that in the special case where $X_i$ are given by $X_i=\{[\bm{z}]\in\C P^3|z^i=0\}$,
$X$ satisfies conditions (ii) and (iii) of Theorem $\ref{thm:smoothing}$, but does not satisfy (i), i.e., $X$ is \emph{not} $d$-semistable.
If we change the configuration of $X_0, \dots, X_3$ so that they do not have fourfold intersections,
then the number of triple points may change, but we see that $X$ still satisfies conditions (ii) and (iii).
To obtain a $d$-semistable SNC complex surface from $X$, we will use N.-H. Lee's criterion given in \cite{Lee19}
(see also Lemma $\ref{lem:CNC}$) and blow up $X$ at appropriate points in the double lines.

We will rename $X$, $X_i$, $D_{ij}$ in Example $\ref{ex:tetra_in_CP3}$ as $X'$, $H_i$, $L_{ij}$, respectively,
because we want to let $X=\smallcup_{i=0}^3X_i$ with $X_i\cap X_j=D_{ij}$ be the desired $d$-semistable SNC complex surface.
We will otherwise use the same notation.
Now for an arbitrary configuration of four hyperplanes $H_i$ in $\C P^3$,
we newly define triple points $p_i$ by $p_i=H_0\cap\dots\cap \widehat{H_i}\cap\dots\cap H_3$, where we omitted $H_i$.

\subsection{Blow-up of an SNC complex surface at finite points in double curves excluding triple points}\label{subsec:bl-up_SNC}
Here we shall prove Proposition $\ref{prop:bl-up_SNC}$,
which will be used in constructing examples of $d$-semistable $K3$ surfaces of Type \rom{3} in later sections.

Consider an SNC complex surface $X'=\smallcup_iX'_i$ with double curves $D'_i=\sum_{j\in I_{i}}D_{ij}'$
and triple points $T'_{ij}=\sum_{k\in I_{ij}}T'_{ijk}$ on each $X'_i$.
Suppose that $X'$ satisfies conditions (ii) and (iii) of Theorem $\ref{thm:smoothing}$.
For a point $p\in D'_{ij}\setminus T'_{ij}$, we blow up $X'_i\cup X'_j$ at $p$ as follows.

We may assume $i=1, j=2$ and choose coordinates $(z_{12,\alpha},w_{12,\alpha})$ on $U_{1,\alpha}$, and $(z_{21,\alpha},w_{21,\alpha})$ on $U_{2,\alpha}$
for $\alpha\in\Lambda_{12}^{(1)}$ such that $(z_{12,\alpha}(p),w_{12,\alpha}(p))=(z_{21,\alpha}(p),w_{21,\alpha}(p))=(0,0)$,
$D'_{12}=\{ w_{12,\alpha}=0\}$ on $U_{1,\alpha}$ and $D'_{21}=\{ w_{21,\alpha}=0\}$ on $U_{2,\alpha}$.
We may also assume that there exists an embedding $\phi_p=\phi_{1,p}\cup\phi_{2,p}$ of $X'_1\cup X'_2$ around $p$ into $\C^3$ given by
\begin{equation*}
\phi_{i,p}:U_{i,\alpha}\subset\C^2_i\xhookrightarrow{~~\iota_i~~}\C^3,\quad\text{where}\quad
\iota_i:(w^0,w^i)\mapsto\begin{dcases}
(w^0,w^1,0)&\text{for }i=1,\\
(w^0,0,w^2)&\text{for }i=2.
\end{dcases}
\end{equation*}
Then the blow-up $\pi_p=\pi_{p,1}\cup\pi_{p,2}:\widetilde{U}_{1,\alpha}\cup\widetilde{U}_{2,\alpha}\to U_{1,\alpha}\cup U_{2,\alpha}$ of
$U_{1,\alpha}\cup U_{2,\alpha}\subset X'_1\cup X'_2$ at $p$ is determined by the commutative diagram
\begin{equation*}
\vcenter{\xymatrix{
\widetilde{U}_{i,\alpha}\ar@{}[r]|-*{\subset}\ar@{->>}[d]_{\pi_{p,i}}\ar@{}[dr]|-*{\commute}
&\widetilde{\C}^2_i\ar@{^{(}->}[r]^-{\widetilde{\iota}_i}\ar@{->>}[d]^{\pi_i}\ar@{}[dr]|-*{\commute}&\widetilde{\C}^3\ar@{->>}[d]^{\pi}\\
U_{i,\alpha}\ar@{}[r]|-*{\subset}&\C^2_i\ar@{^{(}->}[r]^-{\iota_i}&\C^3,
}}
\end{equation*}
where $\pi :\widetilde{\C}^3\to\C^3$ and $\pi_i:\widetilde{\C}^2_i\to\C^2$ for $i=1,2$
are the blow-ups of $\C^3$ and $\C^2_i=\C^2$ at $(0,0,0)$ and $(0,0)$, respectively,
and $\widetilde{\iota}_i:\widetilde{\C}^2_i\to\widetilde{\C}^3$ and $\iota_i:\C^2_i\to\C^3$ are embeddings.
More explicitly, $\widetilde{\C}^3$ and $\widetilde{\C}^2_i$ for $i=1,2$ are given by
\begin{align*}
\widetilde{\C}^3=&\mathrm{Bl}_{(0,0,0)}\C^3\\
=&\set{((w^0,w^1,w^2),[\xi^0,\xi^1,\xi^2])\in\C^3\times\C P^2|w^i\xi^j=w^j\xi^i\text{ for }0\leqslant i<j\leqslant 2}\quad\text{and}\\
\widetilde{\C}^2_i=&\mathrm{Bl}_{(0,0)}\C^2_i
=\set{((w^0,w^i),[\xi^0,\xi^i])\in\C^2_i\times\C P^1_i|w^0\xi^i=w^i\xi^0},
\end{align*}
whereas $\pi :\widetilde{\C}^3\to\C^3$ and $\pi_i:\widetilde{\C}^2_i\to\C^2_i$ are the natural projections
with exceptional divisors $\C P^2$ and $\C P^1_i$, respectively,
and $\widetilde{\iota}_i:\widetilde{\C}^2_i\to\widetilde{\C}^3$ for $i=1,2$ are given by
\begin{equation*}
\widetilde{\iota}_i:((w^0,w^i),[\xi^0,\xi^i])\mapsto (\iota_i(w^0,w^i),[\iota_i(\xi^0,\xi^i)]).
\end{equation*}
Note that the natural projections $\widetilde{\C}^3\to\C P^2$ and $\widetilde{\C}^2\to\C P^1$ yield the universal line bundles
$\mathcal{O}_{\C P^2}(-1)$ and $\mathcal{O}_{\C P^1}(-1)$, respectively.

Now if we define $\widetilde{U}'_i,\widetilde{U}''_i\subset\widetilde{\C}^2_i$ for $i=1,2$ by
\begin{equation}\label{eq:cov_Bl_C2}
\begin{aligned}
\widetilde{U}'_i&=\set{((w^0,w^0\xi^i),[1,\xi^i ])|w^0,\xi^i\in\C}\cong\C^2,\\
\widetilde{U}''_i&=\set{((w^i\xi^0,w^i),[\xi^0,1])|w^i,\xi^0\in\C}\cong\C^2,
\end{aligned}
\end{equation}
then we have an open covering $\widetilde{\C}^2_i=\widetilde{U}'_i\cup\widetilde{U}''_i$.
Thus, defining $\widetilde{U}'_{i,\alpha}$ and $\widetilde{U}''_{i,\alpha}$ by
\begin{equation*}
\widetilde{U}'_{i,\alpha}=\widetilde{U}_{i,\alpha}\cap\widetilde{U}'_i\quad\text{and}\quad
\widetilde{U}''_{i,\alpha}=\widetilde{U}_{i,\alpha}\cap\widetilde{U}''_i,
\end{equation*}
we have $\widetilde{U}_{i,\alpha}=\widetilde{U}'_{i,\alpha}\cup\widetilde{U}''_{i,\alpha}$.
Let $i'=3-i$ for $i=1,2$ as before.
Under the identification of $\widetilde{U}'_i$ and $\widetilde{U}''_i$ with $\C^2$ in $\eqref{eq:cov_Bl_C2}$,
we can use coordinates $(z_{ij,\alpha},\xi_{ij,\alpha})$ on $\widetilde{U}'_{i,\alpha}$ and 
$(w_{ij,\alpha},\zeta_{ij,\alpha})$ on $\widetilde{U}''_{i,\alpha}$, respectively, where $j=i'$.
Letting the proper transform of $D'_{12}$ be $D_{12}$, we see that
\begin{equation*}
D_{12}\cap\widetilde{U}'_{i,\alpha}=\{\xi_{ij,\alpha}=0\}\quad\text{and}\quad
D_{12}\cap\widetilde{U}''_{i,\alpha}=\emptyset\quad\text{for }i=1,2\text{ and }j=i'.
\end{equation*}

Meanwhile, the meromorphic volume form $\Omega_i$ on $U_{i,\alpha}$ lifts to
$\widetilde{\Omega}_i$ on $\widetilde{U}'_{i,\alpha}$ and $\widetilde{U}''_{i,\alpha}$, which is locally represented as
\begin{equation*}
\widetilde{\Omega}_i=\begin{dcases}
-\epsilon_{ij}\der z_{ij,\alpha}\wedge\frac{\der\xi_{ij,\alpha}}{\xi_{ij,\alpha}}&\text{on }\widetilde{U}'_{i,\alpha},\\
-\epsilon_{ij}\der\zeta_{ij,\alpha}\wedge\der w_{ij,\alpha}&\text{on }\widetilde{U}''_{i,\alpha}
\end{dcases}\quad\text{for }i=1,2\text{ and }j=i'.
\end{equation*}
Thus, we see that $\widetilde{\Omega}_i$ has a single pole along $D_{12}$ and yields a trivialization
\begin{equation*}
K_{\widetilde{U}_{i,\alpha}}\otimes [D_{12}\cap\widetilde{U}_{i,\alpha}]\cong\mathcal{O}_{\widetilde{U}_{i,\alpha}},
\end{equation*}
which leads to condition (ii) of Theorem $\ref{thm:smoothing}$.
Also, we have
\begin{equation*}
\residue_{D_{12}\cap\widetilde{U}_{1,\alpha}}\widetilde{\Omega}_1=-\residue_{D_{12}\cap\widetilde{U}_{2,\alpha}}\widetilde{\Omega}_2=\der x_{12,\alpha},
\end{equation*}
which leads to condition (iii) of Theorem $\ref{thm:smoothing}$.

Defining $\widetilde{U}^3_0\subset\widetilde{\C}^3$ by
\begin{equation*}
\widetilde{U}^3_0=\set{((w^0,w^0\xi^1,w^0\xi^2),[1,\xi^1,\xi^2])|w^0,\xi^1,\xi^2\in\C}\cong\C^3,
\end{equation*}
we see that $\iota_i(\widetilde{U}'_i)$ is given by $\{\xi^{i'}=0\}$ in $\widetilde{U}^3_0$,
and thus $\widetilde{U}'_{1,\alpha}\cup\widetilde{U}'_{2,\alpha}$ embeds into $\widetilde{U}^3_0\cong\C^3$ as $\{\xi_{12,\alpha}\xi_{21,\alpha}=0\}$.
Hence, the blow-up of the SNC complex surface $X'_1\cup X'_2$ at $p$ is again an SNC complex surface
around $\pi_p^{-1}(p)\cap D_{12}$ corresponding to $((0,0,0),[1,0,0])\in\widetilde{\C}^3$.

Extending the above local argument to the whole of the blow-up of $X'$, we finally have the following result.
\begin{proposition}\label{prop:bl-up_SNC}
Let $X'=\smallcup_iX'_i$ be an SNC complex surface satisfying conditions $(\mathrm{ii})$ and $(\mathrm{iii})$ of Theorem $\ref{thm:smoothing}$.
Let $X=\smallcup_iX_i$ be the blow-up of $X'$ at finite points in the double curves excluding the triple points.
Then $X$ is also an SNC complex surface satisfying conditions $(\mathrm{ii})$ and $(\mathrm{iii})$ of Theorem $\ref{thm:smoothing}$.
\end{proposition}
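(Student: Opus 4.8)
The plan is to treat the blow-up as a purely local modification and then to globalize the explicit chart computation carried out above. First I would observe that if $X$ is obtained from $X'$ by blowing up at a finite set of centers $p_1,\dots,p_m$ lying in the double curves but avoiding the triple points, then the projection $\pi\colon X\to X'$ restricts to a biholomorphism over $X'\setminus\{p_1,\dots,p_m\}$. Consequently, away from the exceptional divisors $E_s=\pi^{-1}(p_s)$ the SNC structure, the anticanonical property (ii), and the residue-matching property (iii) are inherited verbatim from $X'$. Moreover, since each center $p_s$ avoids the triple locus, it lies on a unique double curve $D'_{ij}$, so that near $p_s$ only the two components $X'_i$ and $X'_j$ are involved and every neighborhood of a triple point is left untouched; in particular the SNC normal form near the triple points is preserved. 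It therefore suffices to verify the three assertions in a neighborhood of each $E_s$.

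For the local verification I would invoke the chart computation established above. In the notation there, the blow-up of $X'_i\cup X'_j$ at $p$ is covered by the charts $\widetilde U'_{i,\alpha}$ and $\widetilde U''_{i,\alpha}$, and the two proper-transformed branches embed into $\widetilde U^3_0\cong\C^3$ as $\{\xi_{12,\alpha}\xi_{21,\alpha}=0\}$; this shows that $X$ is again SNC near $E_s\cap D_{ij}$. For (ii) and (iii) I would set $\Omega_i:=\pi^*\Omega_i$ on each $X_i$. The local expressions computed above show that $\widetilde\Omega_i=\pi^*\Omega_i$ has a single pole along the proper transform $D_{ij}$ and is holomorphic along the remainder of the exceptional divisor, yielding the trivialization $K_{\widetilde U_{i,\alpha}}\otimes[D_{ij}\cap\widetilde U_{i,\alpha}]\cong\mathcal O_{\widetilde U_{i,\alpha}}$ together with $\residue_{D_{ij}\cap\widetilde U_{1,\alpha}}\widetilde\Omega_1=-\residue_{D_{ij}\cap\widetilde U_{2,\alpha}}\widetilde\Omega_2$.

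To assemble the global statement I would note that $\Omega_i=\pi^*\Omega_i$ agrees with the original meromorphic volume form over $X'_i\setminus\{p_s\}$ and with $\widetilde\Omega_i$ near each $E_s$, so it is a well-defined meromorphic volume form on $X_i$ with a single pole along $D_i$; this gives (ii). Equivalently, combining the standard blow-up formulae (cf. \eqref{eq:bl-up_div}), which in the additive convention of this section read $K_{X_i}=\pi^*K_{X'_i}+E$ and $D_i=\pi^*D'_i-E$ (the center being a smooth point of $D'_{ij}$), one obtains $K_{X_i}+D_i=\pi^*(K_{X'_i}+D'_i)\cong\mathcal O_{X_i}$, the exceptional contributions cancelling. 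Condition (iii) is a pointwise identity along each double curve $D_{ij}$: it holds away from the centers by inheritance and near the centers by the local computation, hence everywhere after $\pi$-pullback.

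The step I expect to require the most care is precisely this cancellation of the exceptional divisor: a priori $\pi^*\Omega_i$ could acquire an extra zero or pole along $E_s$, which would violate the single-pole requirement on $D_i$. The explicit charts resolve this, since $\widetilde\Omega_i$ is regular along $E_s\setminus D_{ij}$ and has only a simple pole along the proper transform $D_{ij}$; this is mirrored numerically by the fact that blowing up a smooth point of the polar curve raises $K_{X'_i}$ by $E$ while lowering the proper transform of $D'_i$ by $E$, so that the two effects cancel in the adjunction. Once this is in place, the remaining verifications are the routine globalizations described above.
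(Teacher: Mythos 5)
Your proposal is correct and takes essentially the same route as the paper: the paper's proof of Proposition \ref{prop:bl-up_SNC} consists precisely of the local chart computation you invoke (the lift $\widetilde{\Omega}_i$ with a single pole along the proper transform, the residue matching, and the embedding $\{\xi_{12,\alpha}\xi_{21,\alpha}=0\}\subset\widetilde{U}^3_0$ giving the SNC property), globalized by the observation that the blow-up is a biholomorphism away from the finitely many centers. Your supplementary divisor-level cross-check, $K_{X_i}\otimes[D_i]\cong\pi^*\bigl(K_{X'_i}\otimes[D'_i]\bigr)\cong\mathcal{O}_{X_i}$ via the cancellation of the exceptional divisor, is a correct (and slightly more intrinsic) way of packaging the same cancellation that the charts exhibit.
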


\subsection{A $d$-semistable $K3$ surface of Type \rom{3}}\label{subsec:ex_4triple}
Here and hereafter, we will denote tensor products of line bundles by their sums.
Also, we will denote the divisor class $[D]$ simply by $D$.
Throughout this section, indices $i,j$, and $k$ will take $0,1,2$, or $3$, and
if $i$ and $j$ are placed together as subscripts, then we will understand as $i<j$ unless otherwise stated.
\begin{example}\label{ex:4triple}
As we mentioned above, we rewrite the SNC complex surface with triple points in Example $\ref{ex:tetra_in_CP3}$
as $X'=\smallcup_{i=0}^3H_i$ in $\C P^3$ with $L_{ij}=H_i\cap H_j$.
Then the triple points in $X'$ are given by
\begin{align*}
p_0=\{ [1,0,0,0]\} ,\quad p_1&=\{ [0,1,0,0]\} ,\quad p_2=\{ [0,0,1,0]\} ,\quad\text{and}\quad p_3=\{ [0,0,0,1]\} ,
\end{align*}
which are shown in the following figure.
\begin{figure}[H]
\includegraphics[width=0.55\hsize]{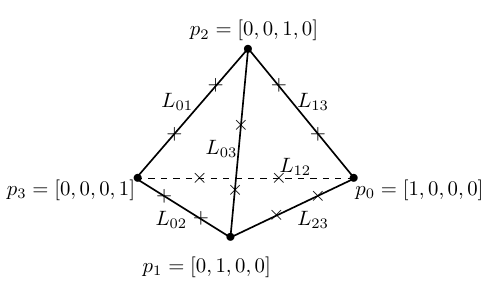}
\caption{Double lines and triple points in $X'=\smallcup_{i=0}^3H_i$}
\label{fig:Tetra}
\end{figure}

To make $X'$ $d$-semistable, we consider the blow-up $X_i$ of $H_i$ at two points except for the triple points in each $L_{ij}$.
According to Remark $2.11$ of \cite{Fr83}, we define $N_{X'}(L_{ij})$ in $\mathrm{Pic}(L_{ij})$ by
\begin{equation}\label{def:CollNormClass}
N_{X'}(L_{ij})=\restrict{L_{ij}}{L_{ij}}+\restrict{L_{ji}}{L_{ij}}+T_{ij}
\end{equation}
for the SNC variety $X'$.
See also equation $(4.1)$ of \cite{Lee19}.
Then let us define the \emph{collective normal class} of $X'$ by
\[
(N_{X'}(L_{ij}))_{0\leqslant i<j\leqslant 3}=
(N_{X'}(L_{01}),N_{X'}(L_{02}),\dots ,N_{X'}(L_{23}))\in\smalloplus_{0\leqslant i<j\leqslant3}\mathrm{Pic}(L_{ij}).
\]
The following description of $d$-semistability is a consequence of $\eqref{def:d-s.s2}$ in Definition $\ref{definition:d-s.s}$.
\begin{lemma} \label{lem:CNC}
An SNC complex surface $X$ is $d$-semistable if and only if the collective normal class of $X$ is trivial.
\end{lemma}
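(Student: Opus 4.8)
The plan is to unwind the two definitions involved and show they are tautologically equivalent once one interprets the divisor-class notation correctly. The statement to prove is Lemma~\ref{lem:CNC}: an SNC complex surface $X$ is $d$-semistable if and only if its collective normal class $(N_X(L_{ij}))_{i<j}$ is trivial. First I would recall that by Definition~\ref{definition:d-s.s}, $d$-semistability means that for \emph{each} double curve $D_{ij}$ we have the isomorphism $N_{ij}\otimes N_{ji}\otimes[T_{ij}]\cong\mathcal{O}_{D_{ij}}$, where $N_{ij}=N_{D_{ij}/X_i}$. The key observation is that under the adjunction-type identification $N_{ij}\cong\restrict{[D_{ij}]}{D_{ij}}=\restrict{L_{ij}}{L_{ij}}$ (using the notation $D_{ij}=L_{ij}$ for the present example), the collective normal class entry $N_X(L_{ij})$ defined in $\eqref{def:CollNormClass}$ is precisely
\[
N_X(L_{ij})=\restrict{L_{ij}}{L_{ij}}+\restrict{L_{ji}}{L_{ij}}+T_{ij}
\]
in $\mathrm{Pic}(L_{ij})$, which is exactly the class of $N_{ij}\otimes N_{ji}\otimes[T_{ij}]$ written additively.

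The proof then proceeds in two short steps. In the first step I would translate the tensor-product condition of Definition~\ref{definition:d-s.s} into additive Picard-group language: the line-bundle isomorphism $N_{ij}\otimes N_{ji}\otimes[T_{ij}]\cong\mathcal{O}_{D_{ij}}$ holds if and only if the corresponding class in $\mathrm{Pic}(D_{ij})=\mathrm{Pic}(L_{ij})$ is trivial, i.e.\ $N_X(L_{ij})=0$. This is immediate because the trivial line bundle $\mathcal{O}_{L_{ij}}$ corresponds to the zero element of $\mathrm{Pic}(L_{ij})$. In the second step I would quantify over all double curves: $X$ is $d$-semistable precisely when $N_X(L_{ij})=0$ for every pair $i<j$, and by the definition of the direct-sum decomposition $\smalloplus_{i<j}\mathrm{Pic}(L_{ij})$, this is equivalent to the single assertion that the collective normal class $(N_X(L_{ij}))_{i<j}$ is the zero element of $\smalloplus_{i<j}\mathrm{Pic}(L_{ij})$, i.e.\ is trivial.

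There is essentially no hard analytic or geometric content here; the only point requiring care is the identification of the normal bundle $N_{ij}=N_{D_{ij}/X_i}$ with the self-intersection class $\restrict{[D_{ij}]}{D_{ij}}$, which is the standard fact that the normal bundle of a smooth divisor is the restriction of its associated line bundle. Thus the main (and only) obstacle is bookkeeping: making sure that the definition $\eqref{def:CollNormClass}$ of $N_X(L_{ij})$ matches term-by-term the factors $N_{ij}$, $N_{ji}$, $[T_{ij}]$ appearing in $\eqref{def:d-s.s2}$, and that the directness of the sum over pairs $i<j$ correctly encodes the universal quantifier ``for each double curve'' in Definition~\ref{definition:d-s.s}. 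Once this dictionary is fixed, both implications follow simultaneously, and the lemma is proved.
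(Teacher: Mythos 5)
Your proposal is correct and follows essentially the same route as the paper, which treats Lemma~\ref{lem:CNC} as an immediate consequence of the componentwise condition $\eqref{def:d-s.s2}$ in Definition~\ref{definition:d-s.s}: identifying $N_{ij}=N_{D_{ij}/X_i}$ with $\restrict{[D_{ij}]}{D_{ij}}$, each entry $N_X(L_{ij})$ of the collective normal class is the class of $N_{ij}\otimes N_{ji}\otimes[T_{ij}]$ in $\mathrm{Pic}(L_{ij})$, so triviality of the tuple is exactly $d$-semistability. The paper adds only a citation to Lee (\cite{Lee19}, Proposition $4.1$) for the stronger fact that Friedman's original sheaf-theoretic definition $\eqref{def:d-s.s}$ is likewise equivalent, which your argument does not need since the paper takes $\eqref{def:d-s.s2}$ as its working definition.
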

In fact, N.-H. Lee showed that $d$-semistability of $X$ defined in $\eqref{def:d-s.s}$
is equivalent to the triviality of the collective normal class of $X$.
See \cite{Lee19}, Proposition $4.1$ for further details.

Now we return to our example.
In Example $\ref{ex:tetra_in_CP3}$, $(4)$, we saw that $\restrict{L_{ij}}{L_{ij}}\cong\restrict{L_{ji}}{L_{ij}}\cong\mathcal{O}_{\C P^1}(1)$
and $T_{ij}\cong  \mathcal{O}_{\C P^1}(2)$.
Hence, we have $N_{X'}(L_{ij})\cong\mathcal{O}_{L_{ij}}(4)$, and we see that the collective normal class of $X'$ is a divisor class
\[
(\mathcal{O}_{L_{01}}(4),\mathcal{O}_{L_{02}}(4),\dots ,\mathcal{O}_{L_{23}}(4))\in\smalloplus_{0\leqslant i<j\leqslant 3}\mathrm{Pic}(L_{ij}).
\]
We choose two points $p_{ij}$ and $p'_{ij}$ in each $L_{ij}\setminus T_{ij}$ as shown in Figure $\ref{fig:Tetra}$ with symbol $\times$.
Setting $P_{ij}=\{ p_{ij}, p'_{ij}\}$ and $P=\smallcup_{0\leqslant i<j\leqslant 3} P_{ij}$, we take the simultaneous blow-up $\pi$ of $X'$ at $P$:
\[
\pi : X=\mathrm{Bl}_{P}(X')\dasharrow X'\subset\C P^3.
\]
Let $X_i$, $D_{ij}$, and $q_i$ be the proper transforms of irreducible components $H_i$, double lines $L_{ij}$, and triple points $p_i$
under the blow-up $\pi$, respectively.
Let $E$ be the exceptional divisor, which is a disjoint union of $2\# I_i$ copies of $\C P^1$.
Then $X$ can be obtained as another SNC complex surface $X=\smallcup_{i=0}^3X_i$.
Hence, we have the following.

\begin{claim}\label{claim:dss4Triple}
The SNC complex surface $X=\smallcup_{i=0}^3X_i$ is $d$-semistable.
\end{claim}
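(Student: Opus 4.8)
The plan is to apply Lemma~\ref{lem:CNC}: since $X$ is already known to be an SNC complex surface satisfying conditions (ii) and (iii) of Theorem~\ref{thm:smoothing} by Proposition~\ref{prop:bl-up_SNC}, it remains to verify condition (i), and by Lemma~\ref{lem:CNC} this is equivalent to showing that the collective normal class of $X$ is trivial, i.e. that
\begin{equation*}
N_X(D_{ij})=\restrict{D_{ij}}{D_{ij}}+\restrict{D_{ji}}{D_{ij}}+T_{ij}\cong\mathcal{O}_{D_{ij}}
\end{equation*}
for every double curve $D_{ij}=X_i\cap X_j$ with $0\leqslant i<j\leqslant 3$.

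First I would compute the two normal-bundle terms via the behaviour of proper transforms under blow-up, exactly as in Example~\ref{ex:K3_double}. The morphism $\pi$ blows up $X'$ at the two points $p_{ij},p'_{ij}\in L_{ij}\setminus T_{ij}$ lying on $L_{ij}$, while the centers on the remaining double lines $L_{ik},L_{i\ell}$ lie away from $L_{ij}$ (they avoid the triple points, which are the only points of $L_{ij}\cap L_{ik}$). Hence the proper transform satisfies $[D_{ij}]=\pi^*[L_{ij}]-[E_{ij}]$, where $E_{ij}$ is the exceptional divisor over $\{p_{ij},p'_{ij}\}$. Restricting to $D_{ij}$, using that $\restrict{\pi}{D_{ij}}\colon D_{ij}\xrightarrow{\sim}L_{ij}\cong\C P^1$ and that $N_{L_{ij}/H_i}\cong\mathcal{O}_{\C P^1}(1)$ by part (4) of Example~\ref{ex:tetra_in_CP3}, while each of the two exceptional $\C P^1$'s meets $D_{ij}$ transversally in one point so that $\restrict{[E_{ij}]}{D_{ij}}$ has degree $2$, I obtain $\restrict{D_{ij}}{D_{ij}}\cong\mathcal{O}_{D_{ij}}(1-2)=\mathcal{O}_{D_{ij}}(-1)$. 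Because the blow-up of an SNC surface at a point of a double curve simultaneously blows up both components meeting there (Section~\ref{subsec:bl-up_SNC}), the same two points are blown up on $H_j$, and the identical computation with $N_{L_{ji}/H_j}\cong\mathcal{O}_{\C P^1}(1)$ gives $\restrict{D_{ji}}{D_{ij}}\cong\mathcal{O}_{D_{ij}}(-1)$.

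Next, since $\pi$ is centered away from the triple points, the triple points on $D_{ij}$ are the proper transforms of the two triple points on $L_{ij}$, so $T_{ij}$ remains a degree-$2$ divisor, $[T_{ij}]\cong\mathcal{O}_{D_{ij}}(2)$, as in part (4) of Example~\ref{ex:tetra_in_CP3}. Adding the three contributions yields
\begin{equation*}
N_X(D_{ij})\cong\mathcal{O}_{D_{ij}}(-1)+\mathcal{O}_{D_{ij}}(-1)+\mathcal{O}_{D_{ij}}(2)\cong\mathcal{O}_{D_{ij}},
\end{equation*}
and since this holds for all $0\leqslant i<j\leqslant 3$, the collective normal class of $X$ vanishes; by Lemma~\ref{lem:CNC}, $X$ is $d$-semistable. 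Equivalently, the collective normal class of $X'$ was $\mathcal{O}_{L_{ij}}(4)$ (from $1+1+2$), and each point blown up on a double line drops the sum $\deg N_{D_{ij}/X_i}+\deg N_{D_{ij}/X_j}$ by $2$ while leaving $T_{ij}$ fixed, so blowing up two points drops the total degree from $4$ to $0$.

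The computation is routine once the blow-up formula $[D_{ij}]=\pi^*[L_{ij}]-[E_{ij}]$ is in hand; the only point requiring genuine care---and which I would state explicitly---is the bookkeeping that isolates the effect of $\pi$ on each single double curve. One must confirm both that the blow-ups performed on the neighbouring double lines do not touch $L_{ij}$ (guaranteed precisely because all centers lie in $L_{ij}\setminus T_{ij}$, off the triple points) and that the two exceptional curves over $L_{ij}$ contribute degree exactly $2$ to the restricted normal bundle. This is where the choice of configuration---two points per double line, avoiding the triple points---is used, and it is exactly what forces the three degrees to balance to zero.
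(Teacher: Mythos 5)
Your proof is correct and takes essentially the same route as the paper: both apply Lemma~\ref{lem:CNC} and verify triviality of the collective normal class by tracking how the simultaneous blow-up at the twelve points changes the three summands of $N_X(D_{ij})$ on each double curve, using that the centers lie in $L_{ij}\setminus T_{ij}$ so the triple-point contribution is untouched. The only difference is presentational—you compute explicit degrees on $D_{ij}\cong\C P^1$ (namely $-1-1+2=0$, distributing a drop of $2$ to each of the two normal-bundle terms), whereas the paper expresses the same cancellation as a pullback of the linear equivalence $N_{X'}(L_{ij})-P\sim 0$; since $\Pic (\C P^1)$ is determined by degree, these are the same computation.
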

\begin{proof}
We use the same notation as above.
Then we see that $N_X(D_{ij})$ is linearly equivalent to the divisor class of $P$.
Hence $\eqref{def:CollNormClass}$ yields that
\[
N_{X'}(L_{ij})=\restrict{L_{ij}}{L_{ij}}+\restrict{L_{ji}}{L_{ij}}+p_k+p_\ell \sim P ,
\]
where $k$ and $\ell$ are chosen so that $\{ i,j,k,\ell\} =\{ 0,1,2,3\}$.
Hence, a straightforward computation shows that
\begin{align*}
N_X( D_{ij})&=\restrict{D_{ij}}{D_{ij}}+\restrict{D_{ji}}{D_{ij}}+q_k+q_\ell\\
&\sim\bigl(\restrict{\pi}{D_{ij}} \bigr)^*\bigl(\restrict{L_{ij}}{L_{ij}}-P \bigr)+
\bigl(\restrict{\pi}{D_{ij}}\bigr)^*\bigl(\restrict{L_{ji}}{L_{ij}}\bigr)
+\bigl(\restrict{\pi}{D_{ij}}\bigr)^*p_k+ \bigl(\restrict{\pi}{D_{ij}}\bigr)^*p_\ell\\
&=\bigl(\restrict{\pi}{D_{ij}}\bigr)^*\bigl(\restrict{L_{ij}}{L_{ij}}+\restrict{L_{ji}}{L_{ij}}+p_k+p_\ell -P\bigr)\\
&=\bigl(\restrict{\pi}{D_{ij}}\bigr)^*\bigl( N_{X'}(L_{ij})-P\bigr)\sim\bigl(\restrict{\pi}{D_{ij}}\bigr)^*(0)=0
\end{align*}
for all $0\leqslant i<j\leqslant 3$, which implies $d$-semistability of $X$ by Lemma $\ref{lem:CNC}$.
\end{proof}

By Proposition $\ref{prop:bl-up_SNC}$ and Claim $\ref{claim:dss4Triple}$, we can apply Theorem $\ref{thm:smoothing}$
to obtain a family of smoothings $\varpi :\mathcal{X}\to\Delta$.
Since topology is invariant under continuous deformations, we can find the Betti numbers of
the general fiber $X_\zeta =\varpi^{-1}(\zeta )$ for $\zeta\in\Delta^{\! *}$ from those of the central fiber $X$.
In particular, the Euler characteristic of the general fiber $X_\zeta$ is calculated as
\begin{align}
\begin{split}\label{eq:EulerK3}
\chi(X_\zeta)&=\chi(X)=\chi (X')-\chi(\{ 12\text{ points}\})+\chi(E)\\
&=\smallsum_{i=0}^3\chi (H_i)+12=24,
\end{split}
\end{align}
where $E$ denotes the exceptional divisor of the blow-up $\pi$.
Moreover, one can compute the integral cohomology group of the general fiber from those of the components of the central SNC fiber.
See \cite{LeeThesis}, Chapter \rom{4} and \cite{Lee19}, Proposition $3.2$ for further details.
According to the Enriques-Kodaira classification of compact complex surfaces with trivial canonical bundle, 
the resulting compact complex surface $X_\zeta$ with trivial canonical bundle is a $K3$ surface.
\end{example}

\section{Two hyperplanes and a quadric surface in $\C P^3$}\label{sec:MatchingProb}
We apply the argument in the previous section to more general SNC complex surfaces and will provide a more technical example.
In this case, we encounter the issue that one cannot glue all components together along their intersections
because the intersection parts are not isomorphic in general (see  Section $\ref{subsec:MP}$).
This kind of problem does not happen in the case of the doubling construction \cite{Doi09, DY14}.
However, we will see that there is a good way to handle this sort of mismatch problem by choosing carefully where and in what order we take the blow-ups.

\subsection{Notation}\label{subsec:Notation}
Let $H_1$, $H_2$ be two hyperplanes and $H_3$ be a quadric surface in $\C P^3$.
Assume the union $Y=H_1\cup H_2 \cup H_3$ is an SNC surface.
For an SNC complex surface $Y=H_i\cup H_j \cup H_k$, let us denote $L_{ij}=H_i\cap H_j$ and $T_{ijk}=H_i\cap H_j \cap H_k$, respectively.
For later use, we denote the set of double curves $L_{ij}$ by $C_k$ with $k=\nu_{ij}$,
where $\nu_{ij}\in\{ 1,2,3\}$ is the unique number satisfying $\epsilon_{ij\nu_{ij}}\neq 0$ as in Section \ref{subsec:smoothing_triple}.
Then we find that the collective normal class of $Y$ is a divisor class
\[
(\mathcal{O}_{C_1}(4), \mathcal{O}_{C_2}(4),\mathcal{O}_{C_3}(4) )\in \mathrm{Pic}(C_1)\oplus \mathrm{Pic}(C_2)\oplus \mathrm{Pic}(C_3).
\]

For $i=1,2$, we choose nonsingular points $P_i$ in $\norm{\mathcal{O}_{C_i}(4)}$ consisting of eight distinct points:
$P_i=\{ p_{i,1},p_{i,2},\dots ,p_{i,8}\}$.
Also we choose $P_3$ in the linear system $\norm{\mathcal{O}_{C_3}(4)}$ which consists of four nonsingular points.
Furthermore, we may assume that all $P_i$'s are distinct.
Let $\tau$ be the set of triple points $T_{123}=H_1 \cap H_2 \cap H_3$ where each $H_i$ intersects the rest transversely.
In order to avoid the following \emph{mismatch problem}, we may choose $P_i\in\norm{\mathcal{O}_{C_i}(4)}$ satisfying
\begin{equation}\label{condi:Distinct}
P_i \cap \tau = \emptyset \quad \text{ for all }i\in \{ 1,2,3\} ,
\end{equation} 
that is, $P_i$ and $\tau$ are distinct sets of points for all $i$.

\subsection{The mismatch problem}\label{subsec:MP}
We denote the blow-up of $X$ at $P$ by $\mathrm{Bl}_P(X)$ as in Section \ref{sec:TypeIIIK3}.
Suppose that we symmetrically set
\[
X_1=\mathrm{Bl}_{P_3}(H_1), \quad X_2=\mathrm{Bl}_{P_1}(H_2), \quad \text{ and } \quad X_3=\mathrm{Bl}_{P_2}(H_3)
\]
and take the proper transforms $D_{ij}$ of $L_{ij}=C_k$ in $X_j$.
Then we have to glue together $X_i$ and $X_j$ along their intersections in a suitable way.
Otherwise, if we mistakenly choose $D_{21}$ in $X_1$ and $D_{12}$ in $X_2$, the mismatch problem may occur,
that is, the intersections $D_{12}$, $D_{21}$ which we want to glue together are not isomorphic.
For instance, let $D_{21}$ be the proper transform of $L_{21}$ under the blow-up $X_1=\mathrm{Bl}_{P_3}(H_1) \dasharrow H_1$.
Now we assume that $P_i\cap \tau \neq \emptyset$.
Since the blow-up locus $P_3$ lies on $L_{21}$, we see that $D_{21}\cong L_{21}=C_3$.
However, $P_1$ intersects $C_3$ at $P_1 \cap \tau$ ($1$  or  $2$ points).
This implies that $D_{12}$ is obtained as the blow-up of $L_{12}$ along $P_1\cap\tau$,
namely $D_{12}=\mathrm{Bl}_{P_1\cap\tau}(L_{12})\dasharrow L_{12}$.
Thus, $D_{12}$ cannot be identified with $D_{21}$.
Consequently, we cannot glue together $D_{21}$ in $X_1$ and $D_{12}$ in $X_2$.
In the following subsection, we shall see how to deal with this sort of technical issue.

\subsection{A $d$-semistable SNC complex surface}\label{subsec:dssSNC}
As we saw in the previous section, if we choose the blow-up locus in a symmetric way, the mismatch problem may occur.
Hence, we shall take the blow-up of each component $H_i$ not to be symmetric,
whereas the proper transforms $D_{ij}$ and $D_{ji}$ to be isomorphic.
More precisely, we will construct a $d$-semistable SNC complex surface $X$ in three steps.

\renewcommand{\labelenumi}{\emph{Step }\arabic{enumi}.}
\begin{enumerate}
\item For $\{i,j\}=\{1,2\}$, we take the blow-up $\pi_i$ of $H_i$ at $P_j$ with $P_j\in|\mathcal O_{C_j}(4)|$,
and consider the proper transform $L'_{ji}$ of $L_{ji}$ under the blow-up $\pi_i$.
Then we show that $L_{ji}' \cong L_{ij}'$ in Claim $\ref{claim:Y21}$.
\item We take the blow-up of $H_1'$ at $P_3'$ where $P_3'$ is the proper transform of $P_3$ under $\pi_1$.
Then we obtain $H_1''=\mathrm{Bl}_{P_3'}(H_1')$ which will be a component of an SNC complex surface in the next step.
\item Finally, we construct the desired $d$-semistable SNC complex surface $X=X_1\cup X_2\cup X_3$
with a normalization $\psi:H_1''\cup H_2'\cup H_3\to X$
such that $\psi(H_1'')=X_1$, $\psi(H_2')=X_2$ and $\psi(H_3)=X_3$ in Proposition $\ref{prop:d-s.s}$.
\end{enumerate}
We introduce our setting in this subsection.
In accordance with the previous argument in Section $\ref{subsec:MP}$,
we choose $P_i\in\norm{\mathcal{O}_{C_i}(4)}$ for $i=1,2,3$ satisfying $\eqref{condi:Distinct}$.
Note that the triple locus $\tau$ consists of two points.
Furthermore, we regard $L_{ij}$ as a divisor of $H_j$, whereas we treat $L_{ji}$ as a divisor of $H_i$,
although $L_{ij}$ and $L_{ji}$ are isomorphic to each other.

\setcounter{step}{0}
\begin{step}\label{step:4-1}
For $\{ i,j\} =\{ 1,2\}$, we consider the blow-up $\pi_i: H_i'=\mathrm{Bl}_{P_j}(H_i) \dasharrow H_i$
and take the proper transforms $L_{3i}'$ (resp. $L_{ji}'$) of $L_{3i}$ (resp. $L_{ji}$) under $\pi_i$.
Let $E_j=\pi_i^{-1}(P_j)$ be the exceptional divisor in $H_i'$.
Then we have isomorphisms
\begin{equation}\label{isom:Y3i}
H_3 \supset L_{i3}\cong L_{3i}\cong L_{3i}' \subset H_i'.
\end{equation}
Moreover, we claim the following isomorphism between $L_{21}' \subset H_1'$ and $L_{12}' \subset H_2'$.
\begin{claim}\label{claim:Y21}
$L_{21}'\cong L_{12}'$.
\end{claim}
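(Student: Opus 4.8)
The plan is to recognize that $L_{21}'$ and $L_{12}'$ are the proper transforms of one and the same rational curve, the line $H_1 \cap H_2 \cong \C P^1$, taken on two different blown-up components, and to show that each of the two blow-ups restricts to an isomorphism over a neighborhood of this line. Recall from Step~\ref{step:4-1} that $\pi_1 : H_1' = \mathrm{Bl}_{P_2}(H_1) \dasharrow H_1$ is centered at $P_2 \subset C_2 = H_1 \cap H_3$, while $\pi_2 : H_2' = \mathrm{Bl}_{P_1}(H_2) \dasharrow H_2$ is centered at $P_1 \subset C_1 = H_2 \cap H_3$.

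First I would record the incidence of the double curves on each hyperplane. On $H_1$ the line $L_{21} = H_1 \cap H_2$ and the conic $C_2 = H_1 \cap H_3$ are distinct divisors meeting in $L_{21} \cap C_2 = H_1 \cap H_2 \cap H_3 = \tau$; symmetrically, on $H_2$ the line $L_{12} = H_1 \cap H_2$ and the conic $C_1 = H_2 \cap H_3$ satisfy $L_{12} \cap C_1 = \tau$. Thus on each component the line $H_1 \cap H_2$ meets the conic carrying the blow-up center only along the triple locus $\tau$.

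The key step is then the genericity hypothesis $\eqref{condi:Distinct}$. Because $P_2 \subset C_2$ and $P_2 \cap \tau = \emptyset$, no point of $P_2$ can lie on $L_{21}$, since any common point of $L_{21}$ and $C_2$ would belong to $\tau$. Hence the center of $\pi_1$ is disjoint from the line $L_{21} \subset H_1$, so $\pi_1$ is an isomorphism near $L_{21}$ and carries its proper transform isomorphically, $L_{21}' \xrightarrow{\sim} H_1 \cap H_2$. The identical argument, using $P_1 \subset C_1$ together with $\eqref{condi:Distinct}$, shows that the center of $\pi_2$ avoids the line $L_{12}$, whence $L_{12}' \xrightarrow{\sim} H_1 \cap H_2$. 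Composing the two isomorphisms yields $L_{21}' \cong H_1 \cap H_2 \cong L_{12}'$, as claimed.

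The only genuine point to check---and exactly the content of the mismatch problem of \S\ref{subsec:MP}---is that the blow-up centers avoid the line $H_1 \cap H_2$; once this is granted, the conclusion rests on the elementary fact that blowing up a smooth surface at points lying off a smooth curve leaves that curve isomorphic to its proper transform. I expect no further obstacle here: the very reason for centering $\pi_1$ and $\pi_2$ at $P_2$ and $P_1$ (on $C_2$ and $C_1$) rather than at $P_3 \subset H_1 \cap H_2$, and for imposing $\eqref{condi:Distinct}$, is precisely to ensure that neither side of the eventual gluing is modified at the triple points, so that $L_{21}'$ and $L_{12}'$ remain isomorphic.
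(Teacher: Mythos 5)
Your proof is correct, but it runs on a different engine than the paper's own argument. The paper's proof of Claim $\ref{claim:Y21}$ never invokes the disjointness hypothesis $\eqref{condi:Distinct}$: it observes that a proper transform of a curve under a point blow-up of the ambient surface is a blow-up of that curve at finitely many points, and that blowing up finitely many points on a smooth curve (here $\C P^1$) is an isomorphism; since $L_{21}\cong L_{12}=C_3$ are literally the same line, both proper transforms are again $\C P^1$ and hence isomorphic. (As written, the paper's bookkeeping even contains an index slip---it speaks of the blow-up of $L_{ji}$ ``at $P_i$'', although the center of $\pi_i$ is $P_j$---which is harmless precisely because any finite set of points on the line yields the same conclusion.) You instead use $\eqref{condi:Distinct}$ together with the incidence relations $L_{21}\cap C_2=\tau$ and $L_{12}\cap C_1=\tau$ to show that the centers $P_2$ and $P_1$ are disjoint from the line $H_1\cap H_2$, so each $\pi_i$ restricts to an isomorphism over a neighborhood of the line and $L_{21}'$, $L_{12}'$ are \emph{canonically} identified with $H_1\cap H_2$. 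This buys something real: your isomorphism is the identity on the underlying line, hence automatically respects the triple locus $\tau$ and the four points of $P_3$, which is exactly the compatibility that the gluing in $\eqref{isom:DoubleCurve}$--$\eqref{isom:TripleLoci}$ and the avoidance of the mismatch problem of Section $\ref{subsec:MP}$ require; the paper's argument, taken literally, only produces an abstract isomorphism of rational curves (which, notably, would also exist in the mismatched situation of Section $\ref{subsec:MP}$). The trade-off is that the paper's version is shorter and makes clear that abstract isomorphism is cheap, while yours isolates where hypothesis $\eqref{condi:Distinct}$ genuinely enters.
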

\begin{proof}
For $\{ i,j\} =\{ 1,2\}$, $L_{ji}'$ is obtained as the blow-up of $L_{ji}$ at $P_i$.
On the other hand, $P_i=\{ 8\text{ points}\}\subset \C P^1$ for each $i=1,2$ and $L_{21}\cong L_{12} =C_3$ imply that
\[
H_1'\supset L'_{21}=\mathrm{Bl}_{P_1}(L_{21})=\mathrm{Bl}_{P_2}(L_{12})=L'_{12}\subset H_2'
\]
as desired.
\end{proof}
Let $P_3'$ be the proper transform of $P_3$ on $C_3 \subset H_1$ under the blow-up $\pi_1: H_1' \dasharrow H_1$.
Setting $\tau'=L_{21}'\cap L_{31}'$, we see that $P_3' \cap \tau'= \emptyset$ by assumption $\eqref{condi:Distinct}$.
Accordingly, $P_3' \cap L_{31}'=\emptyset$, i.e., $P_3'$ does not meet with $L_{31}'$.
\end{step}

\begin{figure}
\includegraphics[width=0.35\hsize]{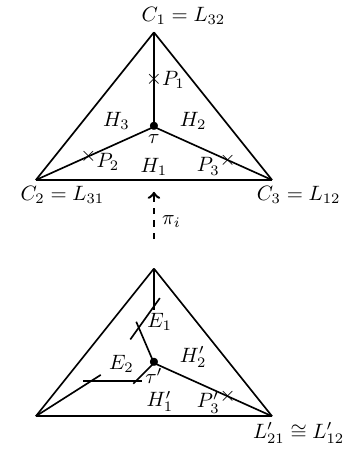}
\caption{The Blow-up \,$\pi_i$}
\end{figure}

\begin{step}\label{step:4-2}
Next we take the blow-up of $H_1'$ at $P_3'$
\[
\pi_1': H_1''=\mathrm{Bl}_{P_3'}(H_1') \dasharrow H_1'
\]
and consider the proper transform $E_2'$ of $E_2$ under $\pi_1'$.
Let $E_3'=\pi_1'^{-1}(P_3')$ be the exceptional divisor.
Since $P_3'\notin L_{31}'=C_2'$, the blow-up $\pi'_1$ does not change $L_{31}'$.
Hence, the proper transform $L_{31}''$ of $L_{31}'$ in $H_1'$ is isomorphic to $L_{31}'$, namely
\begin{equation}\label{isom:Y31}
H_1''\supset L_{31}''\cong L_{31}'.
\end{equation}
Meanwhile, $P_3'=\{ 4\text{ points}\}\subset L_{21}'$ and Claim $\ref{claim:Y21}$ guarantees that there are isomorphisms
\begin{equation}\label{isom:Y21}
L_{21}''\cong L_{21}'\cong L_{12}'. 
\end{equation}
\end{step}

\begin{figure}
\includegraphics[width=0.35\hsize]{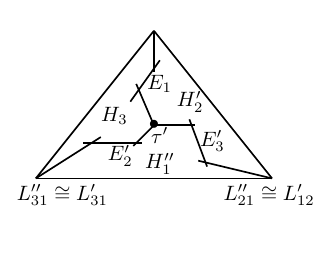}
\vspace{-\baselineskip}
\caption{The SNC complex surface \,$X$}
\end{figure}

\begin{step}
Now we construct an SNC complex surface by gluing $H_1''$, $H_2'$ and $H_3$ together along their intersections.
As a consequence of $\eqref{isom:Y3i}$, $\eqref{isom:Y31}$ and $\eqref{isom:Y21}$ we see that
\begin{equation}\label{isom:DoubleCurve}
L_{21}''\cong L_{12}',\quad L_{32}'\cong L_{23}, \quad \text{and} \quad L_{31}''\cong L_{31}'\cong L_{13}.
\end{equation} 
Eventually, we obtain the following induced isomorphisms
\begin{equation}\label{isom:TripleLoci}
L_{21}''\cap L_{31}''\cong L_{12}'\cap L_{13}\cong L_{12}'\cap L_{23}\cong L_{12}'\cap L_{32}'\cong L_{13}\cap L_{23}\, ~(\cong\tau' ).
\end{equation}
We use $\eqref{isom:DoubleCurve}$ and $\eqref{isom:TripleLoci}$ for gluing all components $H_1''$, $H_2'$ and $H_3$ together.
For example, we can glue $H_1''$ and $H_2'$ together by using $L_{21}''\cong L_{12}'$,
and further we need to consider the isomorphism $L_{21}''\cap L_{31}''\cong \tau'$ in $H_1''$ because there are three components.
Then one can construct an SNC complex surface $X=X_1\cup X_2\cup X_3$ with a normalization $\psi :H_1''\cup H_2'\cup H_3\rightarrow X$ such that
$\psi (H_1'')=X_1$, $\psi (H_2')=X_2$ and $\psi (H_3)=X_3$.

Setting $D_{(i)}=D_{jk}=X_j\cap X_k$ for $\{ i,j,k\} =\{ 1,2,3\}$, we show the following result:
\begin{proposition}\label{prop:d-s.s}
$X$ is $d$-semistable.
\end{proposition}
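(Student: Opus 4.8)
The plan is to invoke Lemma \ref{lem:CNC}, which reduces $d$-semistability of $X$ to the triviality of its collective normal class $\bigl(N_X(D_{(1)}),N_X(D_{(2)}),N_X(D_{(3)})\bigr)$, where, as in \eqref{def:CollNormClass}, $N_X(D_{jk})=\restrict{D_{jk}}{D_{jk}}+\restrict{D_{kj}}{D_{jk}}+T_{jk}$ for $\{i,j,k\}=\{1,2,3\}$. Each double curve $D_{(i)}$ is the proper transform of a line or a conic in $\C P^3$, hence is isomorphic to $\C P^1$, so $\mathrm{Pic}(D_{(i)})\cong\Z$ via the degree and it suffices to check that each $N_X(D_{(i)})$ has degree $0$.

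First I would record the relevant degrees on $Y=H_1\cup H_2\cup H_3$ before any blow-up. By the adjunction formula, a line in a plane $H_i\cong\C P^2$ has self-intersection $1$, the conic $H_3\cap H_i$ cut on the plane $H_i$ has self-intersection $4$, and a hyperplane section of the quadric $H_3\cong\C P^1\times\C P^1$ is a $(1,1)$-curve of self-intersection $2$. Moreover the triple locus $\tau=H_1\cap H_2\cap H_3$ consists of exactly two points (a line meets a quadric in two points) and lies on every double curve, so each $T_{jk}$ has degree $2$.

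Next I would use the standard fact that blowing up a surface at a smooth point of a curve lowers the self-intersection of its proper transform by one while preserving the abstract curve. By \eqref{condi:Distinct} every blow-up center avoids $\tau$, so the two triple points persist on each proper transform and the centers lying on distinct double curves of one component remain disjoint. Recalling the construction --- $X_1=H_1''$ is $H_1$ blown up at the eight points $P_2$ on the conic $L_{31}$ and at the four points $P_3'$ on the line $L_{21}$; $X_2=H_2'$ is $H_2$ blown up at the eight points $P_1$ on the conic $L_{32}$; and $X_3=H_3$ is left unmodified --- I would then add the three degrees making up each collective normal class, remembering that only centers lying on a given curve reduce its self-intersection. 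For $D_{23}=X_2\cap X_3$ this gives $2+(4-8)+2=0$; for $D_{13}=X_1\cap X_3$ it gives $2+(4-8)+2=0$; and for $D_{12}=X_1\cap X_2$ it gives $1+(1-4)+2=0$. Since each $N_X(D_{(i)})$ then has degree $0$ on $\C P^1$, it is trivial, and Lemma \ref{lem:CNC} yields the $d$-semistability of $X$.

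The main obstacle is bookkeeping rather than conceptual: for each of the three double curves one must correctly identify which of its two sides lies in which component, which of the configurations $P_1,P_2,P_3'$ actually lie on that curve, and hence how much the self-intersection drops, and one must invoke \eqref{condi:Distinct} to be sure the centers miss the triple points and one another. The asymmetry forced by the resolution of the mismatch problem --- blowing $H_1$ up twice and leaving $H_3$ untouched --- is exactly what makes this accounting the delicate point; once the three degree sums are seen to vanish, the conclusion is immediate.
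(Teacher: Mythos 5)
Your proof is correct, and its skeleton coincides with the paper's: reduce $d$-semistability to triviality of the collective normal class via Lemma \ref{lem:CNC}, recall exactly which centers were blown up on which curves, and use \eqref{condi:Distinct} to guarantee that all centers miss the triple locus $\tau$ (so the triple-point term keeps degree $2$ and blow-ups on one double curve of a component leave the other double curve untouched). Where you diverge is in how triviality of each $N_X(D_{(i)})$ is verified. You observe that every double curve is rational, so $\mathrm{Pic}(D_{(i)})\cong\Z$ via the degree, and you simply add integers: $2+(4-8)+2$, $2+(4-8)+2$ and $1+(1-4)+2$, all zero — numerically identical to the paper's Claim \ref{claim:CNC}. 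The paper instead argues with divisor classes: since each $P_i$ lies in the linear system $\norm{\mathcal{O}_{C_i}(4)}$ and $N_Y(C_i)\cong\mathcal{O}_{C_i}(4)$, one has the linear equivalence $N_Y(C_i)\sim P_i$, and each $N_X(D_{(i)})$ is exhibited as a pullback of $N_Y(C_i)-P_i$ (for $D_{(3)}$, of $N_{Y'}(L_{12}')-P_3'$ on the intermediate surface $Y'=H_1'\cup H_2'\cup H_3$) under the blow-up map restricted to the double curve, hence is linearly equivalent to zero. The two computations carry the same content here, and your degree count is the more elementary; but note that it genuinely uses that the double curves are copies of $\C P^1$, whereas the paper's linear-equivalence argument is the one that survives when double curves are not rational (compare the elliptic double curves in the Type \rom{2} examples of Section \ref{subsec:ex_SNC_double}), and it also explains why the $P_i$ are required to lie in those particular linear systems — a constraint which on $\C P^1$ is automatic once the number of points is right, which is precisely the fact your argument exploits.
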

The rest of this subsection is devoted to prove Proposition $\ref{prop:d-s.s}$.
In the light of Lemma $\ref{lem:CNC}$, Proposition $\ref{prop:d-s.s}$ is an immediate consequence of the following.
\begin{claim}\label{claim:CNC}
Let $P_i\in \norm{\mathcal{O}_{C_i}(4)}$ be nonsingular points as in Section $\ref{subsec:Notation}$.
Then $\{ P_1,P_2,P_3\}$ determines a collective normal class.
Moreover, $X$ has trivial collective normal class.
\end{claim}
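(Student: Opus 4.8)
The plan is to establish the two assertions separately, reducing both to degree computations on the double curves $C_i\cong\C P^1$ together with the standard behaviour of self-intersections under blow-up; the argument parallels the proof of Claim \ref{claim:dss4Triple}, but must account for the asymmetry introduced by the mismatch problem of Section \ref{subsec:MP}.

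For the first assertion I would compute the collective normal class $N_Y(C_i)=\restrict{L_{ij}}{L_{ij}}+\restrict{L_{ji}}{L_{ij}}+T_{ij}$ of \eqref{def:CollNormClass} term by term. For the line $C_3=L_{12}$, each of the two planes $H_1,H_2\cong\C P^2$ contributes a self-intersection of degree $1$, while $T_{12}=L_{12}\cap H_3$ is the intersection of a line with the quadric, of degree $2$; hence $\deg N_Y(C_3)=1+1+2=4$. For the conics $C_1=L_{23}$ and $C_2=L_{13}$, the self-intersection inside the plane has degree $4$ (a conic in $\C P^2$), the self-intersection inside the smooth quadric $H_3\cong\C P^1\times\C P^1$ (where $C_i$ is a curve of bidegree $(1,1)$) has degree $2$, and the two transverse triple points contribute $2$, so $\deg N_Y(C_i)=8$. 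Since $\deg\mathcal{O}_{C_i}(4)=4\deg_{\C P^3}C_i$ equals $4$ for the line and $8$ for the conics, and $\mathrm{Pic}(\C P^1)=\Z$ is detected by degree, we get $N_Y(C_i)\cong\mathcal{O}_{C_i}(4)$ for every $i$. As $P_i\in\norm{\mathcal{O}_{C_i}(4)}$, this shows that $(P_1,P_2,P_3)$ represents the collective normal class of $Y$.

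For the second assertion I would note that, by the indexing $C_k=L_{ij}$ with $k=\nu_{ij}$, the double curve $D_{(i)}$ of $X$ is exactly the proper transform of $C_i$, and that $C_i$ is blown up on a single component at $P_i$: namely $C_1=L_{23}$ along $X_2$ at $P_1$, $C_2=L_{13}$ along $X_1$ at $P_2$, and $C_3=L_{12}$ along $X_1$ at $P_3$. The basic local fact is that blowing up a component at points $P_i\subset C_i$ lowers the self-intersection of the proper transform by $P_i$; since $\restrict{\pi}{D_{(i)}}:D_{(i)}\to C_i$ is an isomorphism, both normal bundles and the triple divisor pull back under it, the blown-up side acquiring an extra $-P_i$ and the triple divisor remaining unchanged because the centres avoid $\tau$ by \eqref{condi:Distinct}. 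Summing the three contributions gives
\begin{equation*}
N_X(D_{(i)})\sim\bigl(\restrict{\pi}{D_{(i)}}\bigr)^*\bigl(N_Y(C_i)-P_i\bigr)=\bigl(\restrict{\pi}{D_{(i)}}\bigr)^*(0)=0,
\end{equation*}
using $P_i\sim N_Y(C_i)$ from the first part. By Lemma \ref{lem:CNC} this proves that $X$ is $d$-semistable.

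The step needing the most care is precisely the one-sidedness forced by the mismatch problem: unlike the symmetric blow-up of Example \ref{ex:4triple}, here each $C_i$ is blown up on only one of its two components, so the single centre $P_i$ must already carry the full class $N_Y(C_i)$. This is why $\deg P_i$ is taken to match $\deg N_Y(C_i)$ --- four points for the line $C_3$, eight for the conics $C_1,C_2$. I would also have to confirm that the two successive blow-ups of $H_1$, at $P_2\subset L_{13}$ and then at $P_3'\subset L_{12}$, are independent: since $P_2$, $P_3$ and $\tau$ are mutually disjoint by \eqref{condi:Distinct} and genericity, the centre $P_3'$ meets neither $L_{31}'$ (noted at the end of Step \ref{step:4-1}) nor the exceptional locus over $P_2$, so each blow-up alters only the one double curve containing its centre and the displayed computation applies verbatim on each $D_{(i)}$.
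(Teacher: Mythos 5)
Your proposal is correct and takes essentially the same approach as the paper: both parts reduce to Lemma \ref{lem:CNC} together with the computation $N_X(D_{(i)})\sim\bigl(\restrict{\pi}{D_{(i)}}\bigr)^*\bigl(N_Y(C_i)-P_i\bigr)=0$, using the standard behaviour of self-intersections under blow-up and condition \eqref{condi:Distinct} to keep the triple divisor unchanged. The only differences are presentational: you spell out the degree count behind $N_Y(C_i)\cong\mathcal{O}_{C_i}(4)$, which the paper merely asserts in Section \ref{subsec:Notation}, and you handle $D_{(3)}$ by a direct comparison with $Y$ (justified by disjointness of the blow-up centres), whereas the paper runs the identical computation through the intermediate surface $Y'=H_1'\cup H_2'\cup H_3$ and the relation $N_{Y'}(L'_{12})\sim P_3'$.
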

\begin{proof}[Proof of Claim $\ref{claim:CNC}$]
For the first part of the statement, we saw already in Section $\ref{subsec:Notation}$.
For the proof of the second part, it suffices to show that
\begin{center}
\begin{tabular}{lclcl}
(i)~ $N_X(D_{(1)})=0$, & \qquad \quad (ii)~ $N_X(D_{(2)})=0$, & \qquad \quad (iii)~ $N_X(D_{(3)})=0$ 
\end{tabular}
\end{center}
for our purpose.
Recalling $\pi_i: H_i'=\mathrm{Bl}_{P_j}(H_i)\dasharrow H_i$ for $\{ i,j\}=\{ 1,2\}$ in Step $1$,
and the definition of the normal bundle for the SNC complex surface $Y$, we see that $N_Y(L_{i3})$ is linearly equivalent to $P_j$:
\begin{equation}\label{eq:linequiv}
N_Y(L_{i3})=\restrict{L_{i3}}{L_{i3}}+\restrict{L_{3i}}{L_{i3}}+T_{ji3}\sim P_j.
\end{equation}

\noindent (i)~\, Let $X_{123}$ be $X_1\cap X_2 \cap X_3$.
For $i=2$, $j=1$, $\eqref{eq:linequiv}$ implies that
\begin{align*}
N_X(D_{(1)})&=\restrict{D_{23}}{D_{23}}+\restrict{D_{32}}{D_{23}}+X_{123}\\
&\sim\bigl(\restrict{\pi_2}{D_{23}}\bigr)^*\bigl(\restrict{L_{23}}{L_{23}}\bigr)
+\bigl(\restrict{\pi_2}{D_{23}}\bigr)^*\bigl(\restrict{L_{32}}{L_{23}}-P_1\bigr)
+\bigl(\restrict{\pi_2}{D_{23}}\bigr)^*\bigl(T_{123}\bigr)\\
&=\bigl(\restrict{\pi_2}{D_{23}}\bigr)^*\bigl(\restrict{L_{23}}{L_{23}}+\restrict{L_{32}}{L_{23}}+T_{123}-P_1\bigr)\\
&=\bigl(\restrict{\pi_2}{D_{23}}\bigr)^*\bigl( N_Y(L_{23})-P_1 \bigr)\sim\bigl(\restrict{\pi_2}{D_{23}}\bigr)^*(0)=0.
\end{align*}
Thus, we have $N_X(D_{(1)})=0$.
Analogously, one can show that (ii)\, $N_X(D_{(2)})=0$.\\

\noindent (iii)~\, Finally, we show that $N_X(D_{(3)})=0$.
Note that in this case we shall consider
\[
\pi'_1:H_1''=\mathrm{Bl}_{P_3'}(H_1')\dasharrow H_1'\quad \text{with}\quad P_3'\subset L_{12}'
\]
as we mentioned in Step $2$.
Therefore, we have $N_{Y'} (L_{12}')\sim P_3'$ for $Y'=H_1'\cup H_2'\cup H_3$.
Let $Y_{312}'$ be $H_3\cap H_1'\cap H_2'$.
Then we have
\begin{align*}
N_X(D_{(3)})&=\restrict{D_{12}}{D_{12}}+\restrict{D_{21}}{D_{12}}+X_{312}\\
&\sim \bigl(\restrict{\pi_1'}{D_{12}}\bigr)^*\bigl(\restrict{L_{12}'}{L_{12}'}\bigr)
+\bigl(\restrict{\pi_1'}{D_{12}}\bigr)^*\bigl(\restrict{L'_{21}}{L'_{12}}-P_3'\bigr)
+\bigl(\restrict{\pi_1'}{D_{12}}\bigr)^*\bigl(Y_{312}'\bigr)\\
&=\bigl(\restrict{\pi_1'}{D_{12}}\bigr)^*\bigl(\restrict{L_{12}'}{L_{12}'}+\restrict{L'_{21}}{L'_{12}}+Y_{312}'-P_3'\bigr)\\
&=\bigl(\restrict{\pi_1'}{D_{12}} \bigr)^*\bigl( N_{Y'}(L'_{12})-P_3'\bigr)\sim\bigl(\restrict{\pi'_1}{D_{12}}\bigr)^*(0)=0.
\end{align*}
This completes the proof of the claim.
\end{proof}
\end{step}

\subsection{Computation of the Euler characteristic}\label{subsec:Euler}
Applying Theorem $\ref{thm:smoothing}$ to $X$, we obtain a family of smoothings $\varpi :\mathcal{X}\rightarrow\Delta$ of $X$
whose general fibers $X_\zeta=\varpi^{-1}(\zeta)$ are compact complex surfaces with trivial canonical bundle.
In fact, we will show the following.
\begin{proposition}
$X$ is a $d$-semistable $K3$ surface of Type \rom{3}, that is, the Euler characteristic of $X_\zeta$ is $24$.
\end{proposition}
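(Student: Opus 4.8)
The plan is to apply Theorem \ref{thm:smoothing} to $X$ and then pin down the diffeomorphism type of the general fiber by a topological Euler-characteristic computation. First I would verify that $X$ meets all three hypotheses of Theorem \ref{thm:smoothing}. Condition (i) is exactly Proposition \ref{prop:d-s.s}. For conditions (ii) and (iii) I would note that the unblown configuration $Y=H_1\cup H_2\cup H_3$ already satisfies them: on each $\C P^2$-component $H_1,H_2$ the double locus is a cubic ($\mathcal{O}(1)+\mathcal{O}(2)=-K_{\C P^2}$), while on the quadric $H_3\cong\C P^1\times\C P^1$ it is a $(2,2)$-curve $=-K_{H_3}$, so $D_i$ is anticanonical on each component, and matching Poincar\'{e} residues are produced exactly as in Example \ref{ex:tetra_in_CP3}. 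Since every center $P_1,P_2,P_3'$ of the blow-ups in Steps \ref{step:4-1}--\ref{step:4-2} lies on a double curve away from the triple locus (this is the content of \eqref{condi:Distinct}), Proposition \ref{prop:bl-up_SNC} guarantees that the blown-up surface $X$ again satisfies (ii) and (iii). Hence Theorem \ref{thm:smoothing} yields a family $\varpi:\mathcal{X}\to\Delta$ whose general fibers $X_\zeta$ are compact complex surfaces with trivial canonical bundle.

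By the Enriques--Kodaira classification recalled in Section \ref{subsec:geom_SL2C}, such an $X_\zeta$ is a complex torus, a primary Kodaira surface, or a $K3$ surface. A complex torus and a primary Kodaira surface both have vanishing Euler characteristic, whereas a $K3$ surface has Euler characteristic $24$; therefore it suffices to establish $\chi(X_\zeta)=24$ to conclude that $X_\zeta$ is a $K3$ surface. That $X$ is then a $d$-semistable $K3$ surface of Type \rom{3} follows because its components are rational (hence K\"{a}hlerian) surfaces and its double locus is a cycle of rational curves meeting in the triple points.

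To compute $\chi(X_\zeta)$ I would exploit the explicit structure of the smoothing built in Section \ref{sec:Smoothing}. Over the open part $X_i\setminus D_i$ of each component the fiber is a copy of that open part, while the local models $y_{ij,\alpha}y_{ji,\alpha}\in\Delta$ near a double curve and $u^1u^2u^3\in\Delta$ near a triple point realize $X_\zeta$ as an $S^1$- and a $T^2$-bundle there, respectively. Since a torus has vanishing Euler characteristic, additivity of $\chi$ over this stratification kills the contribution of the double-curve and triple-point regions, so that
\begin{equation*}
\chi(X_\zeta)=\sum_{i=1}^3\chi(X_i\setminus D_i)=\sum_{i=1}^3\chi(X_i)-2\sum_{i<j}\chi(D_{ij})+3\,\chi(T_{123}),
\end{equation*}
the coefficients $2$ and $3$ recording that each double curve lies on two components and the triple locus on three. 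The component Euler characteristics are read off from Steps \ref{step:4-1}--\ref{step:4-2}: $X_1$ is $\C P^2$ blown up at the $8$ points $P_2$ and the $4$ points $P_3'$, so $\chi(X_1)=3+12=15$; $X_2$ is $\C P^2$ blown up at the $8$ points $P_1$, so $\chi(X_2)=3+8=11$; and $X_3$ is the unblown quadric, so $\chi(X_3)=4$. Each double curve $D_{ij}$ is a rational curve with $\chi=2$, and $T_{123}$ consists of $2$ points. Substituting gives $\chi(X_\zeta)=(15+11+4)-2\cdot 6+3\cdot 2=24$, as required.

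The hard part will be twofold. First, I must justify the weighted inclusion--exclusion formula for $\chi(X_\zeta)$ rigorously from the gluing data of Section \ref{sec:Smoothing}, i.e.\ confirm the $S^1$- and $T^2$-fibration picture near the double curves and triple points so that those strata contribute $0$. Second, I must carry out the blow-up bookkeeping correctly, since the resolution of the mismatch problem makes the construction asymmetric---$H_1$ acquires twelve exceptional curves, $H_2$ eight, and $H_3$ none---so care is needed to check that each of the three double curves has had its normal class trivialized exactly once, in agreement with the computation of Claim \ref{claim:CNC}.
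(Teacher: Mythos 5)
Your proposal is correct and follows essentially the same route as the paper: verify the hypotheses of Theorem \ref{thm:smoothing} (condition (i) via Proposition \ref{prop:d-s.s}, conditions (ii)--(iii) via Proposition \ref{prop:bl-up_SNC}), compute $\chi(X_\zeta)$ by the weighted inclusion--exclusion formula, and conclude by the Enriques--Kodaira classification; your bookkeeping $(15+11+4)-2\cdot 6+3\cdot 2=24$ agrees exactly with the paper's equivalent computation, which is merely reorganized through the unblown configuration as $\sum_i\chi(H_i)-2\sum_{i<j}\chi(L_{ij})+3\chi(\tau)+\sum_i\chi(P_i)$. The one ``hard part'' you flag---rigorously justifying $\chi(X_\zeta)=\sum_i\chi(X_i)-2\sum_{i<j}\chi(D_{ij})+3\chi(X_{123})$ from the gluing data---is dispatched in the paper simply by citing \cite{Lee19}, Proposition $3.2$, so no fibration argument is required.
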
  
\begin{proof} 
We remark that the Euler characteristic of $X_\zeta$ is given by
\begin{equation}\label{eq:EulerNumber}
\chi (X_\zeta)=\smallsum_{i=1}^3\chi (X_i)-2\smallsum_{i<j}\chi (D_{ij})+3\chi(X_{123}).
\end{equation}
(See Proposition $3.2$ in \cite{Lee19}.)
By taking a sequence of blow-ups in Step $\ref{step:4-1}$ and Step $\ref{step:4-2}$, we see that
\begin{align*}
\chi (X_1)&=\chi (H''_1)=\chi (H_1')-\chi (P_3')+\chi (E_3')=\chi (H_1')+\chi (P_3')\\
&=\chi (H_1)-\chi (P_2)+\chi (E_2)+\chi (P_3')\\
&=\chi (H_1)+\chi (P_2)+\chi (P_3')=\chi (H_1)+\chi (P_2)+\chi (P_3),\\
\chi (X_2)&=\chi (H'_2)=\chi (H_2)-\chi (P_1)+\chi (E_1)=\chi (H_2)+\chi (P_1),\quad\chi (X_3)=\chi (H_3),\\
\chi (D_{12})&=\chi (L'_{12})=\chi (L_{12}),\quad\chi (D_{23})=\chi (L_{23}),\quad\chi (D_{13})=\chi (L_{13}),
\quad\text{and}\\
\chi (X_{123})&=\chi (T_{123})=\chi (\tau )=2.
\end{align*}
Plugging these results into $\eqref{eq:EulerNumber}$, we find that
\begin{equation}\label{eq:EulerNumber2}
\chi (X_\zeta)=\smallsum_{i=1}^3\chi (H_i)-2\smallsum_{i<j}\chi (L_{ij})+3\chi (\tau )+\smallsum_{i=1}^3\chi (P_i).
\end{equation} 
Hence, one can reduce the problem to the computation of the Euler characteristic for each piece.
For $i=1,2$, we see that $\chi(H_i)=3$.
In the same manner as \cite{DY19}, p. $375$, we find the Hodge numbers of $H_3$ using Theorem $4.3$ in \cite{DY19}.
Then we see that
\[
h^{p,q}(H_3)=\begin{array}{ccccc}
&& 1 && \\
& 0 && 0 & \\
0 && 2 && 0 \\
& 0 && 0 & \\
&& 1 && \\
\end{array},\quad \text{so that}\quad\chi (H_3)=4.
\]
Similarly, we have
\begin{gather*}
\chi (L_{13})=\chi (L_{23})=2,\quad\chi (L_{12})=\chi (\C P^1)=2,\\
\chi (\tau )=2,\quad\chi (P_1)=\chi (P_2)=8,\quad\chi (P_3)=4.
\end{gather*}
Consequently, $\eqref{eq:EulerNumber2}$ yields that
\[
\chi (X_\zeta)=(3+3+4)-2(2+2+2)+3\cdot 2+(8+8+4)=24.
\]
Hence, the assertion is verified.
\end{proof}


\end{document}